\documentclass[11pt,oneside]{amsart}

\usepackage[T1]{fontenc}
\usepackage[utf8]{inputenc}

\allowdisplaybreaks

\usepackage{vmargin}
\setmarginsrb{3cm}{2cm}{3cm}{2cm}{0cm}{2cm}{0cm}{1cm}

\usepackage{amssymb,amsmath}

\usepackage{tikz}
\usepackage{todonotes}
\usetikzlibrary{automata,arrows.meta}
\definecolor{MyGreen}{rgb}{0.13,0.55,0.13}

\usepackage{graphicx}

\theoremstyle{plain}
\newtheorem{theorem}{Theorem}
\newtheorem{lemma}[theorem]{Lemma}
\newtheorem{corollary}[theorem]{Corollary}
\newtheorem{proposition}[theorem]{Proposition}
\theoremstyle{definition}
\newtheorem{definition}[theorem]{Definition}
\newtheorem{example}[theorem]{Example}

\numberwithin{equation}{section}

\newcommand{\R}{\mathbb R}
\newcommand{\N}{\mathbb N}
\newcommand{\Z}{\mathbb Z}
\newcommand{\A}{\mathcal{A}}

\newcommand{\Beta}{B}
\newcommand{\val}{\mathrm{val}}
\newcommand{\lex}{\mathrm{lex}}
\newcommand{\rad}{\mathrm{rad}}
\newcommand{\ceil}[1]{\left\lceil#1\right\rceil}
\newcommand{\floor}[1]{\left\lfloor#1\right\rfloor}
\newcommand{\DB}{d_B}

\makeatletter
\newcommand*\bigcdot{\mathpalette\bigcdot@{.5}}
\newcommand*\bigcdot@[2]{\mathbin{\vcenter{\hbox{\scalebox{#2}{$\m@th#1\bullet$}}}}}
\makeatother

\setlength{\parskip}{0.1cm}

\title{Substitutions and Cantor real numeration systems}
\author{\'Emilie Charlier$^{1,*}$, Célia Cisternino$^{1}$, Zuzana Mas\'akov\'a$^2$ and Edita Pelantov\'a$^2$}
\address{$^1$Department of Mathematics\\
University of Li\`ege\\
All\'ee de la D\'ecouverte 12,
4000 Li\`ege, Belgium\\
$^2$Department of Mathematics FNSPE
\\
Czech Technical University in Prague\\
Trojanova 13, 120 00 Praha 2, Czech Republic
}
\thanks{\emph{E-mail address:} \texttt{echarlier@uliege.be, celiacisternino@gmail.com, zuzana.masakova@fjfi.cvut.cz  and edita.pelantova@fjfi.cvut.cz}\\
$^*$Corresponding author.}

\begin{document}
\tikzset{elliptic state/.style={draw,ellipse,minimum width=6cm,minimum height=1.5cm}}

\begin{abstract}
We consider Cantor real numeration system as a frame in which every non-negative real number has a positional representation. The system is defined using a bi-infinite sequence  $\Beta=(\beta_n)_{n\in\Z}$ of real numbers greater than one. We introduce the set of $\Beta$-integers  and code the sequence of gaps between consecutive $\Beta$-integers by a symbolic sequence in general over the alphabet $\N$. We show that this sequence is $S$-adic. We focus on alternate base systems, where the sequence $\Beta$ of bases is periodic and characterize alternate bases $\Beta$, in which $\Beta$-integers can be coded using a symbolic sequence $v_{\Beta}$ over a finite alphabet. With these so-called Parry alternate bases we associate some substitutions and show that $v_\Beta$ is a fixed point of their composition. The paper generalizes results of Fabre and Burdík et al.\ obtained for the R\'enyi numerations systems, i.e., in the case when the Cantor base $\Beta$ is a constant sequence.
\end{abstract}

\maketitle

\bigskip
\hrule
\bigskip

\noindent 2010 {\it Mathematics Subject Classification}: 11K16, 11R06, 37B10, 68Q45

\noindent \emph{Keywords:
Expansions of real number,
Cantor real bases,
Alternate base,
Integers
}

\bigskip
\hrule
\bigskip

\section{Introduction}
\label{Sec : Introduction}

In the pursuit of a generalization of the famous Cobham's theorem, Fabre~\cite{Fabre1995} has associated  substitutions with numeration systems with one real base $\beta$ which is a Parry number. Such substitutions have for their fixed points infinite sequences which are sometimes called Parry words, or $k$-bonacci-like words. They include the famous Fibonacci sequence corresponding to the golden ratio numeration system.
Although Fabre himself viewed the substitutions in the context of automata recognizing the language of an associated linear numeration system, it can be derived already from Thurston~\cite{Thurston1989} that the substitutions may serve for generating the set of numbers with integer expansion in base $\beta$, the so-called $\beta$-integers. These numbers were identified as a suitable tool in the description of mathematical models of quasicrystals~\cite{BurdikFrougnyGazeauKrejcar1998}, since their arithmetic and diffractive properties generalize the arithmetic and diffractive properties of ordinary integers which give an underlying structure of periodic crystals.
Certain physical properties of materials such as electron conductivity can be derived from the character of the spectrum of a Schroedinger operator associated with the model. As identified in the works of Damanik~\cite{Damanik2000,DamanikGhezRaymond2001}, the spectrum depends on  combinatorial properties of infinite sequences coding distances in the $\beta$-integers, such as the factor complexity, palindromicity, repetition index, etc. Other physical  properties can be derived from their geometric behavior~\cite{Alietal2009,BalkovaGazeauPelantova2008,GuimondMasakovaPelantova2004}.

First studies of combinatorics of Parry sequences
date back to the paper of Burdík et al.~\cite{BurdikFrougnyGazeauKrejcar1998}. More general results about their factor complexity can be found in~\cite{Frougny&Masakova&Pelantova:2004,BernatMasakovaPelantova2007}. Palindromic complexity is the subject of~\cite{AmbrozFrougnyMasakovaPelantova2006}.
Turek in~\cite{Turek2015} gives a tool to compute abelian complexity and balances of Parry words.
The balance properties have been  used  for determining  arithmetical features of numeration systems~\cite{BalkovaPelantovaTurek2007}.
Recently, Gheeraert et al.~\cite{GheeraertRomanaStipulanti2023} study their string attractors.

Our aim is to generalize the notion of $\beta$-integers and corresponding symbolic sequences to positional numeration systems defined using multiple bases.
The so-called Cantor real numeration system was introduced
recently in~\cite{CharlierCisternino2021} for representations of numbers in the interval $[0,1)$.
The system is given by a sequence $\Beta = (\beta_n)_{n\geq 1}$ of real bases $\beta_n>1$.  $\Beta$-expansion of a number  $x \in [0,1)$ is the lexicographically greatest string $(x_n)_{n\geq 1}$ of non-negative integers such that  $x=\sum_{n=1}^{+\infty} \frac{x_n}{\beta_1\beta_2 \cdots \beta_n}$. The R\'enyi numeration system can be viewed as the special case when the sequence $\Beta$ is constant.  The adjective "Cantor real" added to the name of numeration system is linked to the fact that the idea of using multiple bases comes already from Cantor~\cite{Cantor1869}, who however, required all the bases $\beta_n$ to be integers.
When the Cantor real base $\Beta$ is a periodic sequence, we speak of alternate base systems.

In our paper, we first modify the definition of a Cantor real system so that it represents also positive numbers outside of the unit interval, see Section~\ref{Sec : BRepresentations}. For that, we consider a bi-infinite sequence $\Beta=(\beta_n)_{n\in\Z}$.
Consequently we introduce
 the set of $\Beta$-integers (Section~\ref{Sec : B-integers}) and determine the distances between its consecutive elements (Section~\ref{Sec : quasi-greedy}).
 For the special case of  alternate base systems, we study in Section~\ref{Sec : U-integers in alternate bases} when the distances between consecutive $\Beta$-integers take finitely many values.
 It turns out that this happens precisely when the base is an Parry alternate base, as defined in~\cite{Charlier&Cisternino&Masakova&Pelantova:2022}.
The sequence of distances between $\Beta$-integers then can be coded by an infinite symbolic sequence $v_\Beta$ over a finite alphabet. Then we show that $v_\Beta$ is a fixed point of a primitive substitution. In Section~\ref{sec:sturmian} we describe the bases $\Beta$ which yield a sturmian infinite sequence $v_\Beta$.

In order to give intuition all along the text, we refer the reader to Section~\ref{Sec : Example}, where we use a running example in order to illustrate the concepts under study and the obtained results.

\section{Combinatorics on words}
\label{Sec : Preliminaries}

An \emph{alphabet} is a countable set. In this paper we consider both finite and infinite alphabets, usually $\A=\{0,1,\dots,k-1\}$ for $k\in\N$, or $\A=\N$. The set of all finite words over the alphabet $\A$, equipped with the empty word $\epsilon$ and the operation of concatenation, is a monoid denoted by $\A^*$. The \emph{length} of a word $w=a_0a_1\cdots a_{n-1}\in\A^*$ with $a_i\in\A$ is denoted $|w|=n$. We have $|\epsilon|=0$. For $a\in\A$ and $w=a_0\cdots a_{n-1}\in\A^*$ we denote the number of occurrences of $a$ in $w$ by $|w|_a$, i.e., $|w|_a = \#\{ 0\leq i <n : w_i=a\}$.

We consider right-sided infinite words ${{v}}=a_0a_1a_2\cdots\in\A^{\N}$ with $a_i\in\A$.
Such an infinite word is \emph{purely periodic} if it can be written as an infinite concatenation of a single finite word $w\in\A^*$. We write ${v}=www\cdots =w^\omega$. The infinite word ${v}$ is \emph{eventually periodic} if
there exist words $u,w\in\A^*$ such that ${v}=uw^\omega$.

The \emph{frequency} of a letter $a$ in an infinite word ${v}=a_0a_1a_2\cdots$ is defined as the limit
$$
\rho_a=\lim_{n\to\infty} \frac{\#\{0\leq i<n :a_i=a\}}{n},
$$
if it exists.

Given an alphabet $\A$, a \emph{substitution} $\psi$ over $\A$ is a monoid endomorphism $\psi:\A^*\to\A^*$ such that there is a letter $a\in\A$ with $\psi(a)=aw$ for $w\in\A^*$, $w\neq \epsilon$. The action of a substitution can be extended to right-sided infinite sequences by
\[
	\psi(a_0a_1a_2\cdots) = \psi(a_0)\psi(a_1)\psi(a_2)\cdots.
\]
An infinite sequence $v\in\A^{\N}$ is a \emph{fixed point} of a substitution $\psi$ if $\psi(v)=v$. It is easy to see that a substitution $\psi$ has always a fixed point, namely $\lim_{n\to\infty}\psi^n(a)$, where the limit is considered over the product topology.

The \emph{incidence matrix} of a substitution $\psi$ over the alphabet $\A=\{0,1,\dots,k-1\}$ with $\#\A=k$, is a $k\times k$ matrix $M_\psi$ with non-negative entries defined as $(M_\psi)_{i,j}=|\psi(j)|_i$. For two substitutions $\psi,\tilde{\psi}$ over $\A$ it holds that $M_{\psi\circ\tilde{\psi}} = M_\psi M_{\tilde{\psi}}$.

An infinite sequence $v\in\A^{\N}$ is an \emph{$S$-adic sequence} if there exists a sequence $(\sigma_n)_{n\in\N}$ of substitutions over an alphabet $\A$ and a letter $a\in\A$ such that
\[
	v=\lim_{n\to\infty} \big(\sigma_0\circ \sigma_1 \circ \cdots \circ \sigma_{n-1}\circ \sigma_n\big) (a).
\]
For a more general definition of $S$-adic systems see~\cite{BertheDelecroix2014}.
A substitution $\psi$ is said to be \emph{primitive} if $M_{\psi}^n$ is a positive matrix for some $n\geq 1$.
It can be shown that a fixed point of a primitive substitution has well-defined non-zero frequencies of all letters. Moreover, the positive vector $(\rho_0,\dots,\rho_{k-1})^{\top}$ is an eigenvector of $M_{\psi}$ corresponding to the Perron-Frobenius eigenvalue of $M_\psi$. See~\cite{Queffelec:2010} for more details.

\section{Two-way real Cantor bases}
\label{Sec : BRepresentations}

In this section, we generalize the framework of~\cite{CharlierCisternino2021} in order to be able to represent all real numbers $x\ge 0$. Let $\Beta=(\beta_n)_{n\in\Z}$ a bi-infinite sequence of real numbers greater than one and such that $\prod_{n=0}^{+\infty}\beta_{n}=+\infty$ and $\prod_{n=1}^{+\infty}\beta_{-n}=+\infty$. We call such a sequence a \emph{two-way real Cantor base}. The associated value map is defined only on those sequences $a=(a_n)_{n\in\Z}$ in $\N^{\Z}$ having a left tail of zeros, that is, such that there exists some $N\in\Z$ such that $a_n=0$ for all $n\ge N$.  For such a sequence $a$, we define
\[
	\val_{\Beta}(a)=\sum_{n=0}^N a_n\beta_{n-1}\cdots\beta_0+\sum_{n=1}^{\infty}\frac{a_{-n}}{\beta_{-1}\cdots\beta_{-n}},
\]
provided that the infinite sum defines a convergent series. In this case, we say that $a$ is a \emph{$\Beta$-representation} of the so-obtained non-negative real number $\val_{\Beta}(a)$. The sequence $a$ is usually written as
\[
	\begin{cases}
	a_N\cdots a_0\bigcdot a_{-1}a_{-2}\cdots 	& \text{if }N\ge 0;	\\
	\hspace{1.25cm}0\bigcdot 0^{-N-1}a_N a_{N-1}\cdots		& \text{if }N< 0.	
	\end{cases}
\]

In general, a non-negative real number $x$ can have more than one $\Beta$-representation. Among all of these, we consider the greedy one defined as follows.

First, consider the case where $x\in[0,1)$. We use the greedy algorithm defined in the one-way Cantor base $(\beta_{-1},\beta_{-2},\ldots)$ as defined in~\cite{CharlierCisternino2021}: first, set $r_{-1}=x$, and then, for $n\le -1$, iteratively compute $a_{n}=\floor{\beta_{n} r_{n}}$ and $r_{n-1}=\beta_{n} r_{n}-a_{n}$. In the two-way real Cantor base $\Beta$, the \emph{$\Beta$-expansion} of $x$ in then denoted by
\[
	 \DB(x)=0\bigcdot a_{-1}a_{-2}\cdots.
\]
Since we have assumed that $\prod_{n=1}^{+\infty}\beta_{-n}=+\infty$, this greedy algorithm converges, meaning that $\val_{\Beta}(\DB(x))=x$.

Next, we consider the case where $x\ge 1$. In this case, we will need to use the bases $\beta_n$ for $n\ge 0$ as well. Let $N\ge 0$ be the minimal integer such that $x<\beta_N\cdots \beta_0$. Note that such an integer exists since we have assumed that $\prod_{n=0}^{+\infty}\beta_{n}=+\infty$. We compute the greedy $\Beta^{(N)}$-representation of $\frac{x}{\beta_N\cdots \beta_0}$, where $\Beta^{(N)}$ is the shifted one-way Cantor base $(\beta_N,\beta_{N-1},\ldots)$. If the obtained expansion is $a_N a_{N-1}\cdots$, then the \emph{$\Beta$-expansion} of $x$ is denoted by
\[
	\DB(x)=a_N\cdots a_0\bigcdot a_{-1}a_{-2}\cdots.
\]	
In particular, the obtained digits $a_n$ belong to the alphabet $\{0, \ldots, \ceil{\beta_n}-1\}$ for all $n$. Also note that, in this setting, the \emph{$\Beta$-expansion} of $1$ is always $\DB(1)=1\bigcdot 0^\omega$.

Let us collect some properties of the
$\Beta$-expansions. These properties are straightforward consequences of analogue results in~\cite{CharlierCisternino2021} for one-way Cantor real bases.

\begin{lemma}
\label{Lem : GreedyCondition}
Let $\Beta=(\beta_n)_{n\in \Z}$ be a two-way real Cantor base. A  sequence $(a_n)_{n\in\Z}$ over $\N$ having a left tail of zeros is the $\Beta$-expansion of some non-negative real number if and only if we have
\[
	\sum_{n=0}^N a_n\beta_{n-1}\cdots\beta_0+\sum_{n=1}^{\infty}\frac{a_{-n}}{\beta_{-1}\cdots\beta_{-n}} < \beta_N\cdots\beta_0\quad\text{for all }N\ge 0
\]
and
\[
	\sum_{n=N}^{\infty}\frac{a_{-n}}{\beta_{-1}\cdots\beta_{-n}} < \frac{1}{\beta_{-1}\cdots\beta_{-N+1}}\quad\text{for all }N>0.
\]
\end{lemma}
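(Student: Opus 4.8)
The plan is to deduce both inequalities from the greedy characterization for \emph{one-way} Cantor bases proved in~\cite{CharlierCisternino2021}, which I will use in the following form: for a one-way Cantor base $(\gamma_k)_{k\ge 1}$, a sequence $(c_k)_{k\ge 1}$ over $\N$ is the greedy expansion of a number in $[0,1)$ if and only if $\sum_{k\ge K}c_k/(\gamma_K\cdots\gamma_k)<1$ for every $K\ge 1$, in which case the series converges to the represented value.

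First I would put the two displayed conditions into one uniform shape. Dividing the first inequality by $\beta_N\cdots\beta_0$ and cancelling telescoping products rewrites it, for each $N\ge 0$, as $\sum_{n\le N}a_n/(\beta_N\cdots\beta_n)<1$; multiplying the second by $\beta_{-1}\cdots\beta_{-N+1}$ (an empty product, hence $1$, when $N=1$) rewrites it, for each $N>0$, as $\sum_{n\le -N}a_n/(\beta_{-N}\cdots\beta_n)<1$. Thus the two conditions together are exactly
\[
	\sum_{n\le m}\frac{a_n}{\beta_m\cdots\beta_n}<1\qquad\text{for every }m\in\Z,
\]
the first condition supplying the levels $m\ge 0$ and the second the levels $m<0$.

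For the forward implication, suppose $(a_n)=\DB(x)$. By definition of $\DB$, the one-sided word obtained by reading $(a_n)$ from its leading position downwards is a greedy expansion in a shifted one-way base: for $x\in[0,1)$ it is the greedy expansion of $x$ in $(\beta_{-1},\beta_{-2},\dots)$, and for $x\ge 1$, with $N_0\ge 0$ minimal such that $x<\beta_{N_0}\cdots\beta_0$, it is the greedy expansion of $x/(\beta_{N_0}\cdots\beta_0)$ in $(\beta_{N_0},\beta_{N_0-1},\dots)$. Feeding this into the one-way characterization yields the uniform inequality at every level $m$ not exceeding the leading position, while the remaining levels (where the digits vanish) hold trivially, since dividing the top inequality by further factors $\beta>1$ only shrinks the left-hand side.

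For the converse I would assume the uniform inequalities and first read off convergence of $\val_\Beta(a)$ from the case $m=-1$. If $a_n=0$ for all $n\ge 0$, the family with $m\le -1$ is precisely the one-way admissibility of $(a_{-1},a_{-2},\dots)$ in $(\beta_{-1},\dots)$, so the converse half of the cited result identifies this word with $\DB(x)$ for $x=\val_\Beta(a)\in[0,1)$. Otherwise let $M\ge 0$ be the leading position; the reversed word $(a_M,a_{M-1},\dots)$ is admissible for $(\beta_M,\beta_{M-1},\dots)$, hence equals the greedy expansion of $y:=\sum_{n\le M}a_n/(\beta_M\cdots\beta_n)<1$, and a telescoping identity gives $x:=\val_\Beta(a)=\beta_M\cdots\beta_0\cdot y$. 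The hard part will be the bookkeeping that makes the two-way greedy algorithm select exactly the leading position: since $a_M\ge 1$ and all digits are nonnegative, one has $\beta_{M-1}\cdots\beta_0\le x<\beta_M\cdots\beta_0$, so the minimal index in the definition of $\DB$ equals $M$ and the algorithm reconstructs $(a_n)$. Apart from this and the empty-product boundary cases, everything is a transcription of the one-way statement.
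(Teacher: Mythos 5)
Your proof is correct and follows exactly the route the paper intends: the paper gives no explicit proof of Lemma~\ref{Lem : GreedyCondition}, stating only that it is a straightforward consequence of the one-way greedy characterization of~\cite{CharlierCisternino2021}, and your argument (the uniform rewriting $\sum_{n\le m}a_n/(\beta_m\cdots\beta_n)<1$ for all $m\in\Z$, the reduction to the shifted one-way bases, and the bookkeeping showing the leading position $M$ satisfies $\beta_{M-1}\cdots\beta_0\le x<\beta_M\cdots\beta_0$) is precisely that transcription carried out in full. No gaps.
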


Let us define a total order on the $\Beta$-expansions of non-negative real numbers. Let $a=(a_n)_{n\in\Z}$ and $b=(b_n)_{n\in\Z}$ be sequences in $\N^{\Z}$ different from $(0)_{n\in\Z}$, for which there exists some minimal indices $M,N\in\Z$ such that $a_n=0$ for all $n>M$ and $b_n=0$ for all $n>N$. Then we say that $a$ is less than or equal to $b$ in the \emph{radix order} if either $M<N$, or if $M=N$ and $(a_{M-n})_{n\in\N}$ is lexicographically less than or equal to $(b_{M-n})_{n\in\N}$. In this case, we write $a\le_{\rad} b$. We also set $(0)_{n\in\Z}\le_{\rad} a$ for any sequence $a\in\N^{\Z}$ having a left tail of zeros. As usual, we write $a<_{\rad} b$ if $a\le_{\rad} b$ and $a\ne b$.

\begin{lemma}
\label{Lem : RadixOrder}
Let $\Beta=(\beta_n)_{n\in \Z}$ be a two-way real Cantor base.
\begin{enumerate}
\item For all non-negative real numbers $x$ and $y$, we have $x<y$ if and only if $\DB(x)<_{\rad} \DB(y)$.
\item The $\Beta$-expansion of a non-negative real number is maximal in the radix order among all its $\Beta$-representations.
\end{enumerate}
\end{lemma}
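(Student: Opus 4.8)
The plan is to reduce both assertions to a single monotonicity property of $\val_\Beta$ with respect to the radix order, expressed through the digit weights. For $n\in\Z$ set $w_n=\beta_{n-1}\cdots\beta_0$ if $n\ge 0$ (with $w_0=1$) and $w_n=1/(\beta_{-1}\cdots\beta_n)$ if $n<0$, so that $\val_\Beta(a)=\sum_{n\in\Z}a_nw_n$ for every sequence $a$ with a left tail of zeros. The first step is to rewrite Lemma~\ref{Lem : GreedyCondition} in the uniform form: a sequence $a$ with a left tail of zeros is a $\Beta$-expansion if and only if $\sum_{n<m}a_nw_n<w_m$ for every $m\in\Z$. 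Indeed, after noting $\beta_N\cdots\beta_0=w_{N+1}$, the first displayed inequality of that lemma (for $N\ge 0$) is exactly this bound at $m=N+1\ge 1$, and after noting $1/(\beta_{-1}\cdots\beta_{-N+1})=w_{-(N-1)}$, the second inequality (for $N>0$) is this bound at $m=-(N-1)\le 0$; together they give all $m\in\Z$.

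Next I would record the elementary fact that, for two distinct sequences with left tails of zeros, if $j$ denotes the largest index with $a_j\ne b_j$, then $a<_{\rad}b$ if and only if $a_j<b_j$; this covers both clauses of the radix order (the case of differing highest nonzero positions corresponds to $j=\max(M_a,M_b)$, where the smaller sequence has digit $0$). The core claim is then: if in addition $a$ is a $\Beta$-expansion and $a_j<b_j$, then $\val_\Beta(a)<\val_\Beta(b)$, because
\[
\val_\Beta(b)-\val_\Beta(a)=(b_j-a_j)\,w_j+\sum_{n<j}(b_n-a_n)\,w_n\ge w_j-\sum_{n<j}a_n w_n>0,
\]
using $b_j-a_j\ge 1$, $b_n\ge 0$, and the reformulated greedy bound $\sum_{n<j}a_nw_n<w_j$ for $a$. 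Applying this with $a=\DB(x)$, $b=\DB(y)$ gives $\DB(x)<_{\rad}\DB(y)\Rightarrow x<y$. The converse is formal: $\DB$ is injective since $\val_\Beta(\DB(x))=x$, and $<_{\rad}$ is a total order, so $x<y$ can be accompanied neither by $\DB(x)=\DB(y)$ nor by $\DB(y)<_{\rad}\DB(x)$ (the latter would force $y<x$). This proves~(1), the value $x=0$ being handled by the convention $(0)_{n\in\Z}\le_{\rad}a$.

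For~(2), let $d=\DB(x)$ and let $a$ be any $\Beta$-representation of $x$ with $a\ne d$, and let $j$ be the largest index with $a_j\ne d_j$. Since $\val_\Beta(a)=\val_\Beta(d)=x$ and $a_n=d_n$ for $n>j$ (only finitely many nonzero), subtracting the common higher part yields $\sum_{n\le j}a_nw_n=\sum_{n\le j}d_nw_n$, whence
\[
(a_j-d_j)\,w_j=\sum_{n<j}d_n w_n-\sum_{n<j}a_n w_n\le\sum_{n<j}d_n w_n<w_j,
\]
where the first inequality uses $a_n\ge 0$ and the second is the greedy bound for the expansion $d$. Thus $a_j-d_j<1$, i.e.\ $a_j<d_j$, and by the radix-order characterization $a<_{\rad}d$. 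Hence $\DB(x)$ is maximal among all $\Beta$-representations of $x$.

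The only delicate point is the bookkeeping in the first step: matching the two inequalities of Lemma~\ref{Lem : GreedyCondition}, which are split according to the sign of the position and involve the empty-product conventions at $n=0$, to the single uniform bound $\sum_{n<m}a_nw_n<w_m$ over all $m\in\Z$; once this is in place the remaining computations are immediate. A conceptual subtlety worth flagging is the asymmetry exploited in part~(2): a general representation $a$ need not obey any greedy bound, and the argument works precisely because it invokes the greedy bound only for $d=\DB(x)$ together with the nonnegativity of the digits of $a$.
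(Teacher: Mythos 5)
Your proof is correct, but it takes a different route from the paper: the paper gives no argument at all for this lemma, stating it (together with Lemma~\ref{Lem : GreedyCondition}) as a ``straightforward consequence of analogue results'' in the one-way setting of~\cite{CharlierCisternino2021}, i.e.\ it reduces to the known lexicographic maximality and monotonicity of greedy expansions for one-way Cantor bases. You instead derive everything directly within the two-way framework from Lemma~\ref{Lem : GreedyCondition}, and your key organizational move --- packaging both families of inequalities as the single uniform bound $\sum_{n<m}a_nw_n<w_m$ for all $m\in\Z$, where $w_n$ is the weight of position $n$ --- is sound: I checked the index bookkeeping, and the first family is exactly the bound at $m=N+1\ge 1$ while the second is the bound at $m=-(N-1)\le 0$, the empty-product conventions giving $w_0=1$ as needed. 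From there your core monotonicity claim (if $a$ satisfies the greedy bounds and $a_j<b_j$ at the top disagreement index $j$, then $\val_\Beta(a)<\val_\Beta(b)$) correctly yields part~(1), with the converse handled by injectivity of $\DB$ and totality of $<_{\rad}$; and part~(2) follows from the same estimate applied with the greedy bound invoked only for $d=\DB(x)$, exploiting nonnegativity of the digits of the arbitrary representation $a$ --- an asymmetry you rightly flag, since a general representation satisfies no greedy condition. What your approach buys is a self-contained proof that makes the logical dependence explicit (everything flows from Lemma~\ref{Lem : GreedyCondition}, which is in effect the two-way Parry admissibility condition); what the paper's approach buys is brevity, at the cost of leaving the transfer from one-way to two-way bases to the reader. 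One caveat: your argument is only as self-contained as Lemma~\ref{Lem : GreedyCondition} itself, which the paper also leaves unproved, so strictly speaking you have traded one citation for another --- but that is the natural place to cut.
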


\section{$\Beta$-integers}
\label{Sec : B-integers}

We are interested in the analogue of the set $\N$ in the context of real Cantor bases.

\begin{definition}
\label{Def : Bintegers}
Let $\Beta=(\beta_n)_{n\in \Z}$ be a two-way real Cantor base. A non-negative real number $x$ is called a \emph{$\Beta$-integer} if its $\Beta$-expansion is of the form
\[
	\DB(x)=a_{n-1}\cdots a_0\bigcdot 0^\omega
\]
with $n\in \N$. Otherwise stated, its $\Beta$-expansion has only zeros after the radix point. In this case, we say that $n$ is the \emph{length} of the $\Beta$-expansion of $x$. The set of all $\Beta$-integers is denoted by $\N_{\Beta}$.
\end{definition}

In the case where $\beta_n=\beta$ for all $n\in \N$, then the set of $\Beta$-integers coincides with the classical $\beta$-integers introduced by Gazeau~\cite{Gazeau1997} and widely studied, see for instance~\cite{BurdikFrougnyGazeauKrejcar1998,Fabre1995}.
Moreover, note that $\N_{\Beta}=\N$ if and only if all bases $\beta_n$ are integers.

Clearly, the set of $\Beta$-integers is unbounded.
Let us show that it is also a discrete set.

\begin{proposition}
\label{Pro : noaccumulation}
The set $\N_{\Beta}$ has no accumulation point in $\R$.
\end{proposition}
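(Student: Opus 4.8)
The plan is to reduce the statement to a finiteness count: I would show that $\N_{\Beta}\cap[0,R]$ is finite for every $R>0$. Since $\N_{\Beta}\subseteq[0,+\infty)$, this immediately rules out any accumulation point in $\R$, because an accumulation point would force infinitely many $\Beta$-integers into some bounded neighbourhood.

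The first step is to bound the value of a $\Beta$-integer from below in terms of the length of its expansion. If $x$ is a $\Beta$-integer with $\DB(x)=a_{n-1}\cdots a_0\bigcdot 0^\omega$ of length $n\ge 1$, then the leading digit satisfies $a_{n-1}\ge 1$ and all remaining summands are non-negative, so
\[
	x=\sum_{k=0}^{n-1}a_k\beta_{k-1}\cdots\beta_0\ge a_{n-1}\beta_{n-2}\cdots\beta_0\ge\prod_{k=0}^{n-2}\beta_k,
\]
with the convention that the empty product equals $1$; the only $\Beta$-integer of length $0$ is $0$. Because every $\beta_k>1$ and we have assumed $\prod_{k=0}^{+\infty}\beta_k=+\infty$, the partial products $\prod_{k=0}^{m}\beta_k$ form an increasing sequence tending to $+\infty$. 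Hence there is an integer $N_0$ with $\prod_{k=0}^{n-2}\beta_k>R$ for all $n\ge N_0$, which forces every $\Beta$-integer lying in $[0,R]$ to have an expansion of length at most $N_0-1$.

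The second step is to bound, for each fixed length, the number of $\Beta$-integers. Recall from Section~\ref{Sec : BRepresentations} that each digit $a_k$ of a $\Beta$-expansion belongs to the finite alphabet $\{0,\ldots,\ceil{\beta_k}-1\}$. Therefore there are at most $\prod_{k=0}^{n-1}\ceil{\beta_k}$ distinct $\Beta$-expansions of length $n$, so only finitely many $\Beta$-integers of each length. Summing over the finitely many admissible lengths $n\le N_0-1$ shows that $\N_{\Beta}\cap[0,R]$ is finite, and since $R$ was arbitrary the proposition follows.

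I expect no serious obstacle here: the only point requiring care is the lower bound linking length to value, which relies on the leading digit being nonzero together with the divergence hypothesis $\prod_{k\ge 0}\beta_k=+\infty$ built into the definition of a two-way real Cantor base. It is precisely this divergence that keeps the admissible lengths bounded on bounded sets; once that is in place, the finiteness of the digit alphabets makes the remainder a routine count.
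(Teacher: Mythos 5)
Your proposal is correct and follows essentially the same argument as the paper: both bound the length of the $\Beta$-expansion of any $\Beta$-integer in a bounded set using the divergence of the partial products $\prod_{k=0}^{n}\beta_k$, and then observe that the finite digit alphabets $\{0,\ldots,\ceil{\beta_k}-1\}$ leave only finitely many expansions of each admissible length. The only cosmetic difference is that you obtain the length bound from the nonzero leading digit, whereas the paper invokes it directly from the minimality of $N$ in the greedy expansion; these are the same observation.
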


\begin{proof}
It suffices to see that for all $n\ge 0$, the set of $\Beta$-integers that are smaller than $\beta_{n-1}\cdots \beta_0$ is finite. The $\Beta$-expansion of any such $\Beta$-integer is of the form $a_ma_{m-1}\cdots a_0\bigcdot 0^\omega$ with $m\le n$. Each digit $a_i$ being bounded by $\beta_i$, there are only finitely many $\Beta$-expansions having this property.
\end{proof}

In view of this result, there exists an increasing sequence $(x_k)_{k \in \N}$ such that
\[
	\N_{\Beta} = \{x_k: k \in \N\}.
\]
In particular, we have $x_0=0$ and $x_1=1$, and $\lim_{k \to +\infty }x_k = +\infty$.

\begin{definition}
\label{Def : Mn}
For every $n \in \N$, we define
\[
	M_{\Beta,n} =\max\{x \in \N_{\Beta} : x< \beta_{n-1}\cdots \beta_0 \}
\]
and we write the $\beta$-expansion of $M_n$ as
\[
	\DB(M_{\Beta,n}) = m_{\Beta,n,n-1}\cdots m_{\Beta,n,0}\bigcdot 0^\omega.
\]
\end{definition}

By Lemma~\ref{Lem : RadixOrder}, the number $M_{\Beta,n}$ is the largest $\Beta$-integer among those having a $\Beta$-expansion of length at most $n$.

\begin{proposition}
\label{Pro : Xk+1-Xk}
Let $k\in\N$ and write $\DB(x_k)=\cdots a_2a_1a_0\bigcdot 0^\omega$ and $\DB(x_{k+1})=\cdots b_2b_1b_0\bigcdot 0^\omega$. Define $n$ to be the maximal index such that $a_n\neq b_n$. Then
\begin{enumerate}
\item $x_{k+1}-x_k =\beta_{n-1}\cdots \beta_0 - M_{\Beta,n}\leq 1$
\item $\DB(x_k) = \cdots a_{n+2}a_{n+1}a_{k}m_{\Beta,n,n-1} m_{\Beta,n,n-2} \cdots m_{\Beta,n,0}\bigcdot 0^\omega$
\item $\DB(x_{k+1}) = \cdots b_{n+2}b_{n+1}(a_n+1) 0^n\bigcdot 0^\omega$.
\end{enumerate}
\end{proposition}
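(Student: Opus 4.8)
The plan is to establish parts (2) and (3) first and then read off (1). Throughout, write $p_j=\beta_{j-1}\cdots\beta_0$ for the place value of position $j$ (so $p_0=1$ and $p_{N+1}=\beta_N\cdots\beta_0$), and for a digit string $c=\cdots c_1c_0\bigcdot 0^\omega$ set $S_N(c)=\sum_{i=0}^N c_ip_i$; by Lemma~\ref{Lem : GreedyCondition}, such a string is the $\Beta$-expansion of a $\Beta$-integer exactly when $S_N(c)<p_{N+1}$ for every $N\ge 0$. Since $x_k<x_{k+1}$, Lemma~\ref{Lem : RadixOrder}(1) gives $\DB(x_k)<_{\rad}\DB(x_{k+1})$; as the two expansions agree above position $n$ and differ at $n$, this forces $a_n<b_n$. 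Integrality of the digits together with the bound $b_n\le\ceil{\beta_n}-1$ then yields $a_n\le b_n-1\le\ceil{\beta_n}-2$, whence $a_n+1\le\ceil{\beta_n}-1<\beta_n$. I also record a key preliminary: the ``low part'' $\sum_{i=0}^{n-1}a_ip_i$ of $x_k$ is itself the value of a $\Beta$-integer that is $<p_n$ (the greedy inequalities for $N<n$ are inherited from $x_k$, and for $N\ge n$ they hold because the truncated sum is $<p_n\le p_{N+1}$), so it is $\le M_{\Beta,n}$; the same holds for $x_{k+1}$.

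For part (2) I would argue by contradiction, supposing the low part $R_k:=\sum_{i=0}^{n-1}a_ip_i$ of $x_k$ is strictly smaller than $M_{\Beta,n}$. Let $H=\sum_{i>n}a_ip_i$ be the high part common to $x_k$ and $x_{k+1}$, and form the candidate $x_k'$ with digit string $\cdots a_{n+1}a_n\,m_{\Beta,n,n-1}\cdots m_{\Beta,n,0}\bigcdot 0^\omega$, i.e.\ replace the low part of $x_k$ by that of $M_{\Beta,n}$. Then $x_k<x_k'$ (the low part increased) and $x_k'=H+a_np_n+M_{\Beta,n}<H+(a_n+1)p_n\le H+b_np_n\le x_{k+1}$, so it remains only to check that $x_k'$ is a genuine $\Beta$-integer, which will contradict $x_k,x_{k+1}$ being consecutive. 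This validity check is the crux and I expect it to be the main obstacle, since it involves infinitely many greedy inequalities $S_N(x_k')<p_{N+1}$. For $N<n$ they hold because $M_{\Beta,n}$ is a $\Beta$-integer; for $N=n$ one has $S_n(x_k')=a_np_n+M_{\Beta,n}<(a_n+1)p_n\le\beta_np_n=p_{n+1}$, using $M_{\Beta,n}<p_n$. The delicate range is $N>n$, where comparing with $x_k$ fails because we enlarged the low part; the device is to compare instead with $x_{k+1}$, which is already greedy: as $x_k'$ and $x_{k+1}$ share the digits $a_i=b_i$ for $i>n$ while $b_n\ge a_n+1$, one gets $S_N(x_{k+1})-S_N(x_k')=(b_n-a_n)p_n+R_{k+1}-M_{\Beta,n}\ge(p_n-M_{\Beta,n})+R_{k+1}>0$, so $S_N(x_k')<S_N(x_{k+1})<p_{N+1}$. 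The point is that the strictly larger digit at position $n$ contributes at least $p_n$, which dominates the entire admissible low part $M_{\Beta,n}<p_n$. This establishes (2).

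Part (3) is handled by the same mechanism applied to the candidate $z=H+(a_n+1)p_n$, with digit string $\cdots b_{n+2}b_{n+1}(a_n+1)0^n\bigcdot 0^\omega$. One checks $z$ is a $\Beta$-integer exactly as above: for $N<n$ the digits vanish, for $N=n$ we use $S_n(z)=(a_n+1)p_n<\beta_np_n=p_{n+1}$ (here the strict bound $a_n+1<\beta_n$ is needed), and for $N>n$ we use $S_N(x_{k+1})-S_N(z)=(b_n-a_n-1)p_n+R_{k+1}\ge 0$, so $S_N(z)\le S_N(x_{k+1})<p_{N+1}$. Since $z>x_k$ and, as real numbers, $z=H+(a_n+1)p_n\le H+b_np_n+R_{k+1}=x_{k+1}$, the $\Beta$-integer $z$ lies in $(x_k,x_{k+1}]$ and hence equals $x_{k+1}$. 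Comparing expansions gives $b_n=a_n+1$ and $b_i=0$ for $i<n$, which is exactly (3).

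Finally, (1) follows by subtraction: by (2) and (3) we have $x_k=H+a_np_n+M_{\Beta,n}$ and $x_{k+1}=H+(a_n+1)p_n$, so $x_{k+1}-x_k=p_n-M_{\Beta,n}=\beta_{n-1}\cdots\beta_0-M_{\Beta,n}$. For the bound $\le 1$ I would give a short separate argument valid for any consecutive pair: the integer part $I$ of $\DB(x_k+1)$, obtained by deleting the digits after the radix point, is again a $\Beta$-integer by Lemma~\ref{Lem : GreedyCondition} (deleting the fractional digits only decreases the first family of sums and makes the second family vacuous), and since the deleted fractional part lies in $[0,1)$ we have $x_k<I\le x_k+1$. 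Hence $x_{k+1}\le I\le x_k+1$, that is, $x_{k+1}-x_k\le 1$.
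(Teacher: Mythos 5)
Your proof is correct, and it is substantially more self-contained than the paper's, which disposes of the whole proposition in two sentences: the paper notes that adding one to the right-most digit of $\DB(M_{\Beta,n})$ violates the greedy condition, so that $M_{\Beta,n}+1\ge\beta_{n-1}\cdots\beta_0$ (giving the bound $\le 1$), and then asserts that items (2) and (3) ``follow from the fact that the $\Beta$-expansions of consecutive $\Beta$-integers are consecutive in the radix order.'' What you have done, in effect, is prove that asserted fact rather than invoke it: your two candidate constructions --- replacing the low part of $\DB(x_k)$ by $\DB(M_{\Beta,n})$, and the string $\cdots b_{n+1}(a_n+1)0^n\bigcdot 0^\omega$ --- together with the admissibility checks via Lemma~\ref{Lem : GreedyCondition} show precisely that no admissible string lies strictly between the two expansions in the radix order. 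Your key mechanism (a digit increase at position $n$ contributes $\beta_{n-1}\cdots\beta_0$, which strictly dominates any admissible low part since $M_{\Beta,n}<\beta_{n-1}\cdots\beta_0$) is exactly the quantitative content hidden behind the paper's appeal to radix order, and your device of verifying the greedy inequalities for $N>n$ by comparing partial sums with the already-greedy $\DB(x_{k+1})$ is the one non-obvious step the paper never spells out. For the bound $x_{k+1}-x_k\le 1$ your route is genuinely different: instead of the paper's maximality argument on $M_{\Beta,n}+1$, you truncate $\DB(x_k+1)$ after the radix point to produce a $\Beta$-integer in the interval $(x_k,x_k+1]$; this is arguably cleaner, since it proves for an arbitrary two-way Cantor base, independently of the digit analysis, that consecutive $\Beta$-integers are at distance at most $1$, and it recovers the paper's inequality $M_{\Beta,n}+1\ge\beta_{n-1}\cdots\beta_0$ as a corollary of item (1) rather than using it as an input. (Incidentally, you silently and correctly read the misprint $a_k$ in item (2) of the statement as $a_n$; the only step you leave implicit is converting the equality $R_k=M_{\Beta,n}$ of values into equality of digits, which is immediate since by Lemma~\ref{Lem : GreedyCondition} an admissible string is \emph{the} $\Beta$-expansion of its value.)
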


\begin{proof}
By definition of $M_{\Beta,n}$, adding one to the right-most digit $m_{\Beta,n,0}$ violates the greedy condition, which means that $M_{\Beta,n}+1\ge \beta_{n-1}\cdots \beta_0$. The rest of the statement follows from the fact that the $\Beta$-expansions of consecutive $\Beta$-integers are consecutive in the radix order.
\end{proof}

The above theorem states that the distances between consecutive $\Beta$-integers take values of the form
\begin{equation}
	\Delta_n:=\beta_{n-1}\cdots\beta_0-M_{\Beta,n},\quad\text{ for }n\in\N,
\end{equation}
accordingly to the position where they differ. Quite obviously, we have $\Delta_0=1-0=1$ and $\Delta_n<1$ for $n\ne 0$.
At specific cases it may happen that $\Delta_n=\Delta_{n'}$ even though $n\neq n'$. Nevertheless, as will be seen later, it is still important to keep track of the position $n$.

In view of the previous proposition, we encode the distances between consecutive $\Beta$-integers by an infinite sequence.

\begin{definition}
\label{Def : Word}
With a two-way real Cantor base $\Beta$, we associate  a sequence $w_{\Beta}=(w_k)_{k\in\N}$ over $\N$ as follows: for all $k\in \N$, we let $w_k$ be the greatest non-negative integer $n$ such that $\DB(x_k)$ and $\DB(x_{k+1})$ differ at index $n$.
\end{definition}

If the set of distances $\{x_{k+1}-x_k : k\in\N\}=\{\Delta_n: n \in \N\}$ between $\Beta$-integers  is finite, then we can project $w_{\Beta}$ onto a sequence over a finite alphabet and study its substitutivity. To see the connection of this notion to the R\'enyi numeration system, we  recall that Fabre~\cite{Fabre1995} assigned to a Parry base $\beta$ a primitive substitution over a finite alphabet. This substitution turned out to be the substitution fixing the $\beta$-integers defined by Gazeau~\cite{Gazeau1997}. Our aim in the next sections will be to find a generalization of these results.

For a two-way real Cantor base $\Beta=(\beta_n)_{n\in Z}$, we write $S(\Beta)$ the two-way real Cantor base $S(\Beta)=(\beta_{n+1})_{n\in \Z}$. Thus, we have
\[
	\val_{\Beta}(a_{N-1}\cdots a_0 \bigcdot a_{-1}a_{-2}\cdots)
	=\beta_0\cdot\val_{S(\Beta)}(a_{N-1}\cdots a_1\bigcdot a_0 a_{-1}a_{-2}\cdots).
\]
For $\N\in\N$, we let $S^N$ be the $N$-th iterate of $S$, i.e., $S^N(\Beta)=(\beta_{n+N})_{n\in \Z}$. We extend this definition to all integers $N\in\Z$ in a natural fashion simply by setting $S^N(\Beta)=(\beta_{n+N})_{n\in \Z}$.

\begin{lemma}
\label{Lem : BandSB}
For all real number $x >0$, we have that $\DB(x)=a_{N-1}\cdots a_0 \bigcdot a_{-1}  a_{-2}\cdots$ if and only if $d_{S(\Beta)}(\frac{x}{\beta_0})=a_{N-1}\cdots a_1\bigcdot a_0 a_{-1}\cdots.$
\end{lemma}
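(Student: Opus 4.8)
The plan is to exploit the value identity stated just before the lemma together with the characterization of the $\Beta$-expansion as the radix-maximal $\Beta$-representation (Lemma~\ref{Lem : RadixOrder}(2)). I would introduce the index-shift map $\phi$ that sends a sequence $(a_n)_{n\in\Z}$ with a left tail of zeros to the sequence $(a_{n+1})_{n\in\Z}$; concretely, $\phi$ turns the written representation $\cdots a_1a_0\bigcdot a_{-1}\cdots$ into $\cdots a_1\bigcdot a_0a_{-1}\cdots$. The displayed identity
\[
	\val_{\Beta}(a_{N-1}\cdots a_0\bigcdot a_{-1}\cdots)=\beta_0\cdot\val_{S(\Beta)}(a_{N-1}\cdots a_1\bigcdot a_0a_{-1}\cdots)
\]
shows precisely that $\phi$ restricts to a bijection between the $\Beta$-representations of $x$ and the $S(\Beta)$-representations of $x/\beta_0$; it clearly preserves both the property of having a left tail of zeros and that of being nonzero, the latter since $x>0$.

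The technical heart is to check that $\phi$ is an isomorphism for the radix order. Let $a=(a_n)$ have leading index $M$, so that $a_M\neq0$ and $a_n=0$ for $n>M$. Then $\phi(a)_{M-1}=a_M\neq0$ and $\phi(a)_n=0$ for $n>M-1$, so $\phi(a)$ has leading index $M-1$; the shift lowers every leading index by exactly $1$. Moreover, for the lexicographic tail one computes $\phi(a)_{(M-1)-n}=a_{M-n}$ for all $n\in\N$, so the sequence read off from the leading index of $\phi(a)$ coincides with that of $a$. Comparing two representations $a,b$ with leading indices $M,N$, the condition ``$M-1<N-1$, or $M-1=N-1$ and the lexicographic tails compare accordingly'' is verbatim the condition defining $a\le_{\rad}b$. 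Hence $a\le_{\rad}b$ if and only if $\phi(a)\le_{\rad}\phi(b)$.

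Combining these two facts settles both directions at once. Indeed, $\DB(x)$ equals the given representation $a$ if and only if $a$ is the radix-maximal $\Beta$-representation of $x$ by Lemma~\ref{Lem : RadixOrder}(2). Since $\phi$ is an order-preserving bijection from the $\Beta$-representations of $x$ onto the $S(\Beta)$-representations of $x/\beta_0$, this holds if and only if $\phi(a)$ is the radix-maximal $S(\Beta)$-representation of $x/\beta_0$, that is, if and only if $\phi(a)=d_{S(\Beta)}(x/\beta_0)$; and $\phi(a)$ is exactly the shifted word $a_{N-1}\cdots a_1\bigcdot a_0a_{-1}\cdots$. I expect the only delicate point to be the index bookkeeping in the radix-order computation, namely keeping the leading-index shift and the relabeling $\phi(a)_{(M-1)-n}=a_{M-n}$ straight, while bijectivity and value preservation are immediate from the displayed identity. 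One could alternatively transport the greedy inequalities of Lemma~\ref{Lem : GreedyCondition} through the substitution $\beta_{n}\mapsto\beta_{n+1}$, but the order-isomorphism argument avoids that computation entirely.
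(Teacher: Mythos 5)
Your proof is correct, but it follows a different route from the paper, whose entire proof is the one-line remark that the lemma ``follows from the definition of $S(\Beta)$ and the greedy algorithm'' --- i.e., the intended argument unwinds the greedy construction of $\DB(x)$ (including the choice of the minimal $N$ with $x<\beta_N\cdots\beta_0$ and the case split between $x\in[0,1)$ and $x\ge 1$) and observes that the same digit computation, shifted by one position, produces $d_{S(\Beta)}(x/\beta_0)$. You instead characterize the expansion abstractly as the unique radix-maximal $\Beta$-representation via Lemma~\ref{Lem : RadixOrder}(2), and show that the index shift $\phi$ is a value-rescaling bijection (by the displayed identity $\val_{\Beta}(a)=\beta_0\,\val_{S(\Beta)}(\phi(a))$) and a radix-order isomorphism between the representations of $x$ and those of $x/\beta_0$; since $\le_{\rad}$ is total on nonzero sequences with a left tail of zeros, the maximum is unique and $\phi$ must carry it to the maximum, which settles both directions at once. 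Your bookkeeping is right: the leading index drops by exactly one and $\phi(a)_{(M-1)-n}=a_{M-n}$, so the comparison conditions match verbatim, and the hypothesis $x>0$ correctly excludes the zero-sequence corner case of the radix order. What your approach buys is a case-free argument that never touches the greedy recursion or the minimality of $N$; what it costs is the reliance on Lemma~\ref{Lem : RadixOrder}(2) (maximality of the expansion), which the paper imports from the one-way setting of Charlier--Cisternino without proof --- this is legitimate here since that lemma precedes the present one and does not depend on it, but it makes your proof less self-contained than the direct greedy computation would be.
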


\begin{proof}
This follows from the definition of $S(\Beta)$ and the greedy algorithm.
\end{proof}

We now describe the relationship between $\Beta$-integers and $S(\Beta)$-integers.

\begin{lemma}
\label{Lem : NSB and NB}
We have $\beta_0\N_{S(\Beta)}
	\subset \N_{\Beta}
	\subset \beta_0\N_{S(\Beta)} + \{0,\ldots,\ceil{\beta_0}-1\}$.
\end{lemma}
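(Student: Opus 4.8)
The plan is to derive both inclusions from Lemma~\ref{Lem : BandSB}, which says that passing from $\Beta$ to $S(\Beta)$ while dividing the argument by $\beta_0$ amounts to shifting the radix point one position to the left. The guiding observation is that a positive real $x$ lies in $\N_\Beta$ exactly when every digit strictly to the right of the radix point in $\DB(x)$ vanishes, and Lemma~\ref{Lem : BandSB} transports this vanishing condition between the two bases: the shift carries the digit $a_0$ (the last digit before the $\Beta$-radix point) to the first digit after the $S(\Beta)$-radix point, while leaving the tail $a_{-1}a_{-2}\cdots$ in place.

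For the inclusion $\beta_0\N_{S(\Beta)}\subset\N_\Beta$, I would take $y\in\N_{S(\Beta)}$; the case $y=0$ is immediate, so assume $y>0$ and set $x=\beta_0 y$. Writing $\DB(x)=a_{N-1}\cdots a_0\bigcdot a_{-1}a_{-2}\cdots$, Lemma~\ref{Lem : BandSB} gives $d_{S(\Beta)}(y)=d_{S(\Beta)}(x/\beta_0)=a_{N-1}\cdots a_1\bigcdot a_0 a_{-1}\cdots$. Since $y$ is an $S(\Beta)$-integer, all digits after its radix point are zero, forcing $a_0=a_{-1}=a_{-2}=\cdots=0$. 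Hence $\DB(x)=a_{N-1}\cdots a_1 0\bigcdot 0^\omega$ has only zeros after the $\Beta$-radix point, so $x=\beta_0 y\in\N_\Beta$.

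For the inclusion $\N_\Beta\subset\beta_0\N_{S(\Beta)}+\{0,\ldots,\ceil{\beta_0}-1\}$, I would take $x\in\N_\Beta$ with $\DB(x)=a_{n-1}\cdots a_0\bigcdot 0^\omega$ (the case $x=0$ again being trivial). Applying Lemma~\ref{Lem : BandSB} to $x/\beta_0$ yields the $S(\Beta)$-expansion $a_{n-1}\cdots a_1\bigcdot a_0 0^\omega$. I set $r=a_0$, which lies in $\{0,\ldots,\ceil{\beta_0}-1\}$ because the digits of a $\Beta$-expansion respect the alphabet bound, and I let $y$ be the number whose $S(\Beta)$-expansion is the truncated integer part $a_{n-1}\cdots a_1\bigcdot 0^\omega$. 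The shift relation $\val_\Beta(a_{n-1}\cdots a_1 0\bigcdot 0^\omega)=\beta_0\,\val_{S(\Beta)}(a_{n-1}\cdots a_1\bigcdot 0^\omega)=\beta_0\,y$ stated just before Lemma~\ref{Lem : BandSB}, together with $\val_\Beta(a_{n-1}\cdots a_0\bigcdot 0^\omega)=a_0+\val_\Beta(a_{n-1}\cdots a_1 0\bigcdot 0^\omega)$, then yields $x=\beta_0 y+r$ with $y\in\N_{S(\Beta)}$, as required.

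The step needing the most care is verifying that the truncated string $a_{n-1}\cdots a_1\bigcdot 0^\omega$ is genuinely the greedy $S(\Beta)$-expansion of an $S(\Beta)$-integer, and not merely an $S(\Beta)$-representation. This I would check via Lemma~\ref{Lem : GreedyCondition}: the admissibility inequalities for the integer part of $a_{n-1}\cdots a_1\bigcdot a_0 0^\omega$ are inherited by its truncation, since erasing the fractional digit $a_0$ only decreases the left-hand sides of the strict inequalities, while the conditions governing the fractional part are trivially met by $0^\omega$. The remaining work—tracking exactly where the radix point lands under the shift and matching digit indices across the two bases—is routine once Lemma~\ref{Lem : BandSB} and the shift identity for $\val$ are in hand.
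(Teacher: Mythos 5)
Your proof is correct and follows exactly the route the paper intends: the paper's own proof is the one-line remark ``This follows from Lemma~\ref{Lem : BandSB}'', and your argument is precisely that deduction written out in full, including the one genuinely non-trivial detail the paper leaves implicit (that the truncated string $a_{n-1}\cdots a_1\bigcdot 0^\omega$ is itself a greedy $S(\Beta)$-expansion, which you correctly settle via Lemma~\ref{Lem : GreedyCondition}, since deleting the fractional digit $a_0$ only decreases the left-hand sides of the strict admissibility inequalities). Nothing to correct.
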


\begin{proof}
This follows from Lemma~\ref{Lem : BandSB}.
\end{proof}

\begin{lemma}
\label{Lem:dSB-MSB}
For all $n\in\N$, we have
$d_{S(\Beta)}(M_{S(\Beta),n})
=m_{\Beta,n+1,n}\cdots m_{\Beta,n+1,1}\bigcdot 0^\omega$.
\end{lemma}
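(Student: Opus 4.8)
The plan is to prove the statement $d_{S(\Beta)}(M_{S(\Beta),n}) = m_{\Beta,n+1,n}\cdots m_{\Beta,n+1,1}\bigcdot 0^\omega$ by relating the maximal $S(\Beta)$-integer below $\beta_n\cdots\beta_1$ to the maximal $\Beta$-integer below $\beta_n\cdots\beta_0$ via the shift lemma. First I would unwind the definitions: $M_{S(\Beta),n}$ is the largest $S(\Beta)$-integer strictly below $(S(\Beta))_{n-1}\cdots(S(\Beta))_0 = \beta_n\cdots\beta_1$, since the $j$-th base of $S(\Beta)$ is $\beta_{j+1}$. Likewise $M_{\Beta,n+1}$ is the largest $\Beta$-integer strictly below $\beta_n\cdots\beta_0$.

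The key identity I would invoke is Lemma~\ref{Lem : BandSB}, which states that multiplication by $\beta_0$ shifts a $\Beta$-expansion rightward by one position: $\DB(x)=a_{N-1}\cdots a_0\bigcdot a_{-1}\cdots$ if and only if $d_{S(\Beta)}(x/\beta_0)=a_{N-1}\cdots a_1\bigcdot a_0 a_{-1}\cdots$. Applying this to $x=M_{\Beta,n+1}$, whose expansion is $m_{\Beta,n+1,n}\cdots m_{\Beta,n+1,0}\bigcdot 0^\omega$ by Definition~\ref{Def : Mn}, yields directly that
\[
	d_{S(\Beta)}\!\left(\tfrac{M_{\Beta,n+1}}{\beta_0}\right)
	= m_{\Beta,n+1,n}\cdots m_{\Beta,n+1,1}\bigcdot m_{\Beta,n+1,0}\,0^\omega.
\]
So the remaining task is twofold: to show that $M_{\Beta,n+1}/\beta_0 = M_{S(\Beta),n}$, and to show that the trailing digit $m_{\Beta,n+1,0}$ vanishes, so that the fractional part is indeed $0^\omega$.

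The main obstacle, and the heart of the argument, will be establishing $M_{\Beta,n+1}=\beta_0 M_{S(\Beta),n}$ together with $m_{\Beta,n+1,0}=0$. I would argue as follows. By Lemma~\ref{Lem : NSB and NB}, $\beta_0\N_{S(\Beta)}\subset\N_\Beta$, and since $x\in\N_{S(\Beta)}$ satisfies $x<\beta_n\cdots\beta_1$ exactly when $\beta_0 x\in\N_\Beta$ satisfies $\beta_0 x<\beta_n\cdots\beta_0$, the map $x\mapsto\beta_0 x$ sends the set of $S(\Beta)$-integers below $\beta_n\cdots\beta_1$ into the $\Beta$-integers below $\beta_n\cdots\beta_0$, preserving order; hence $\beta_0 M_{S(\Beta),n}\le M_{\Beta,n+1}$. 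For the reverse inequality I must rule out that $M_{\Beta,n+1}$ has a nonzero last digit. Suppose its expansion were $m_{\Beta,n+1,n}\cdots m_{\Beta,n+1,0}\bigcdot0^\omega$ with $m_{\Beta,n+1,0}\ne0$; by Lemma~\ref{Lem : BandSB} the number $\lfloor M_{\Beta,n+1}/\beta_0\rfloor$-type truncation, namely the $\Beta$-integer with expansion $m_{\Beta,n+1,n}\cdots m_{\Beta,n+1,1}0\bigcdot0^\omega$, is also a $\Beta$-integer below $\beta_n\cdots\beta_0$ and lies strictly between $\beta_0\N_{S(\Beta)}$-points; using the greedy/radix maximality of $M_{\Beta,n+1}$ from Lemma~\ref{Lem : RadixOrder} one sees this forces $m_{\Beta,n+1,0}$ to be as large as permitted, yet the $S(\Beta)$-expansion read off from the shift then violates the greedy condition of Lemma~\ref{Lem : GreedyCondition} for $S(\Beta)$ unless the digit is $0$. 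Once $m_{\Beta,n+1,0}=0$ is secured, the displayed shift identity reads $d_{S(\Beta)}(M_{\Beta,n+1}/\beta_0)=m_{\Beta,n+1,n}\cdots m_{\Beta,n+1,1}\bigcdot0^\omega$, which exhibits $M_{\Beta,n+1}/\beta_0$ as an $S(\Beta)$-integer below $\beta_n\cdots\beta_1$; combined with maximality this gives $M_{\Beta,n+1}/\beta_0=M_{S(\Beta),n}$ and completes the proof.
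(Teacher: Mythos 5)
Your opening moves are correct and coincide with the paper's: applying Lemma~\ref{Lem : BandSB} to $M_{\Beta,n+1}$ to get $d_{S(\Beta)}\big(\tfrac{M_{\Beta,n+1}}{\beta_0}\big)=m_{\Beta,n+1,n}\cdots m_{\Beta,n+1,1}\bigcdot m_{\Beta,n+1,0}0^\omega$, and deducing $\beta_0 M_{S(\Beta),n}\le M_{\Beta,n+1}$ from the order-preserving inclusion $\beta_0\N_{S(\Beta)}\subset\N_{\Beta}$. But the heart of your argument rests on a false claim: $m_{\Beta,n+1,0}=0$ does not hold in general, and consequently $M_{S(\Beta),n}\neq M_{\Beta,n+1}/\beta_0$ in general. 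Maximality of $M_{\Beta,n+1}$ pushes its last digit to be as \emph{large} as the greedy condition allows (indeed, by Proposition~\ref{Pro : DistancesAndQuasiGreedy} the digits of $M_{\Beta,n+1}$ are the first $n+1$ digits of a quasi-greedy expansion of $1$), not zero. Concretely, for the paper's running example $\Beta=(\frac{1+\sqrt{13}}{2},\frac{5+\sqrt{13}}{6})$ of Section~\ref{Sec : Example}, Table~\ref{Table:M} gives $\DB(M_{\Beta,4})=2001\bigcdot 0^\omega$, so $m_{\Beta,4,0}=1\neq 0$; yet the lemma's conclusion holds, as Table~\ref{Table:SB-M} shows $d_{S(\Beta)}(M_{S(\Beta),3})=200\bigcdot 0^\omega=m_{\Beta,4,3}m_{\Beta,4,2}m_{\Beta,4,1}\bigcdot 0^\omega$. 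Your proposed contradiction also cannot materialize: the word $m_{\Beta,n+1,n}\cdots m_{\Beta,n+1,1}\bigcdot m_{\Beta,n+1,0}0^\omega$ is a perfectly valid greedy $S(\Beta)$-expansion whatever $m_{\Beta,n+1,0}$ is, since it arises from a greedy $\Beta$-expansion via Lemma~\ref{Lem : BandSB}; no violation of Lemma~\ref{Lem : GreedyCondition} forces the digit to vanish. When $m_{\Beta,n+1,0}\neq 0$, the number $M_{\Beta,n+1}/\beta_0$ is simply not an $S(\Beta)$-integer.

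The correct repair, and what the paper actually does, is to subtract the offending digit rather than argue it away: from the displayed expansion one reads off $d_{S(\Beta)}\big(\tfrac{M_{\Beta,n+1}-m_{\Beta,n+1,0}}{\beta_0}\big)=m_{\Beta,n+1,n}\cdots m_{\Beta,n+1,1}\bigcdot 0^\omega$, so this number is an $S(\Beta)$-integer below $\beta_n\cdots\beta_1$, and the task becomes proving $M_{S(\Beta),n}=\tfrac{M_{\Beta,n+1}-m_{\Beta,n+1,0}}{\beta_0}$. The maximality step is the order-preserving transfer you already sketched, run in the other direction: for any $x\in\N_{S(\Beta)}$ with $x<\beta_n\cdots\beta_1$ and $d_{S(\Beta)}(x)=a_n\cdots a_1\bigcdot 0^\omega$, Lemma~\ref{Lem : BandSB} gives $\DB(\beta_0 x)=a_n\cdots a_10\bigcdot 0^\omega$, Lemma~\ref{Lem : RadixOrder} gives $a_n\cdots a_10\bigcdot 0^\omega\le_{\rad} m_{\Beta,n+1,n}\cdots m_{\Beta,n+1,0}\bigcdot 0^\omega$, and dropping the final digit of both words preserves the radix comparison, so $d_{S(\Beta)}(x)\le_{\rad} m_{\Beta,n+1,n}\cdots m_{\Beta,n+1,1}\bigcdot 0^\omega$. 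Note finally that the true identity $M_{\Beta,n+1}=\beta_0 M_{S(\Beta),n}+m_{\Beta,n+1,0}$ is exactly what Corollary~\ref{Lem : relationDelta} relies on later ($\beta_0\widetilde{\Delta}_n=\Delta_{n+1}+m_{\Beta,n+1,0}$), so your version with $m_{\Beta,n+1,0}=0$ would also break the downstream substitution $\psi_{\Beta}(n)=0^{m_{\Beta,n+1,0}}(n+1)$, which would collapse to $n\mapsto(n+1)$.
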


\begin{proof}
By Lemma~\ref{Lem : BandSB}, we know that
\[
	d_{S(\Beta)}\left(\frac{M_{\Beta,n+1}}{\beta_0}\right)=m_{\Beta,n+1,n}\cdots m_{\Beta,n+1,1}\bigcdot m_{\Beta,n+1,0}0^\omega.
\]
From this, it follows that
\[
	d_{S(\Beta)}\left(\frac{M_{\Beta,n+1}-m_{\Beta,n+1,0}}{\beta_0}\right)=m_{\Beta,n+1,n}\cdots m_{\Beta,n+1,1}\bigcdot 0^\omega.
\]
Now, our aim is to show that
\begin{equation}
\label{eq:M-SB-B}
	M_{S(\Beta),n}=\frac{M_{\Beta,n+1}-m_{\Beta,n+1,0}}{\beta_0}.
\end{equation}
By Lemma~\ref{Lem : RadixOrder}, it then suffices to see that for all $x\in\N_{S(\Beta)}$ such that $x<\beta_n\cdots\beta_1$, the $S(\Beta)$-expansion of $x$ is less than or equal to $m_{\Beta,n+1,n}\cdots m_{\Beta,n+1,1}\bigcdot 0^\omega$ in the radix order. This is indeed the case since if $d_{S(\Beta)}(x)=a_n\cdots a_1\bigcdot 0^\omega$, then $\DB(\beta_0 x)=a_n\cdots a_10\bigcdot 0^\omega$ by Lemma~\ref{Lem : BandSB}. But then by Lemma~\ref{Lem : RadixOrder}, we obtain that $a_n\cdots a_10\bigcdot 0^\omega\le_{\rad} m_{\Beta,n+1,n}\cdots m_{\Beta,n+1,0}\bigcdot 0^\omega$, and hence $a_n\cdots a_1\bigcdot 0^\omega\le_{\rad} m_{\Beta,n+1,n}\cdots m_{\Beta,n+1,1}\bigcdot 0^\omega$.
\end{proof}

We now show that the infinite sequence $w_{S(\Beta)}$ can be mapped to the infinite sequence $w_{\Beta}$ by using a substitution over the infinite alphabet $\N$. This will then allow us to prove that $w_{\Beta}$ is an $S$-adic sequence.

\begin{definition}
We define a map $\psi_{\Beta}$ from $\N$ to $\N^*$ (where $\N$ is seen as an infinite alphabet) as follows:
\[
	\psi_{\Beta}\colon \N \to \N^*,\
	n\mapsto 0^{m_{\Beta,n+1,0}}(n+1).
\]
We see $\psi_{\Beta}$ as a substitution over $\N$, meaning that if $w=(w_k)_{k\in \N}$ is an infinite sequence over $\N$, then $\psi_{\Beta}(w)$ is the infinite concatenation of the blocks $\psi_{\Beta}(w_k)$.
\end{definition}

\begin{proposition}
\label{Pro : Substitution}
We have $\psi_{\Beta}(w_{S(\Beta)})=w_{\Beta}$.
\end{proposition}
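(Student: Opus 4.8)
The plan is to establish the identity $\psi_{\Beta}(w_{S(\Beta)})=w_{\Beta}$ by showing that applying the substitution $\psi_{\Beta}$ to the coding of gaps between $S(\Beta)$-integers reproduces exactly the coding of gaps between $\Beta$-integers. The key structural fact is Lemma~\ref{Lem : NSB and NB}, which tells us that the $\Beta$-integers are obtained from the $S(\Beta)$-integers by multiplying by $\beta_0$ and then ``filling in'' the interval $[\beta_0 y, \beta_0 y')$ between two consecutive $S(\Beta)$-integers $y<y'$ with additional $\Beta$-integers. First I would make this filling precise: if $y\in\N_{S(\Beta)}$ with $d_{S(\Beta)}(y)=c_n\cdots c_1\bigcdot 0^\omega$, then by Lemma~\ref{Lem : BandSB} the number $\beta_0 y$ is a $\Beta$-integer with $\DB(\beta_0 y)=c_n\cdots c_1 0\bigcdot 0^\omega$, and the $\Beta$-integers lying in $[\beta_0 y,\beta_0 y')$ are exactly those of the form $\beta_0 y + j$ for $j=0,1,\dots$ up to the largest admissible last digit.

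The heart of the argument is to count and locate these interpolating integers. My second step is to show that for a fixed $S(\Beta)$-integer $y$ with successor $y'$, if $w_k^{S(\Beta)}=n$ is the position at which $d_{S(\Beta)}(y)$ and $d_{S(\Beta)}(y')$ first differ (so the gap $y'-y$ is governed by position $n$), then the block of $\Beta$-integers filling $[\beta_0 y,\beta_0 y')$ produces the word $\psi_{\Beta}(n)=0^{m_{\Beta,n+1,0}}(n+1)$ in the gap-coding $w_{\Beta}$. The point is that the $\Beta$-integers in this interval are $\beta_0 y, \beta_0 y+1,\dots$, and consecutive ones among these differ only in the last digit (position $0$), contributing the symbols $0$; there are exactly $m_{\Beta,n+1,0}$ such unit steps because, by Lemma~\ref{Lem:dSB-MSB} and the definition of $M_{\Beta,n+1}$, the largest last digit available before one is forced to carry is precisely $m_{\Beta,n+1,0}$. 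The final transition from the top $\Beta$-integer in this interval to $\beta_0 y'$ is governed by position $n+1$ (since $\beta_0$ shifts every position up by one), contributing the symbol $n+1$. This matches $\psi_{\Beta}(n)$ exactly.

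The main obstacle I anticipate is verifying the claim that the transition from the last $\Beta$-integer below $\beta_0 y'$ to $\beta_0 y'$ occurs at position $n+1$, and that all intermediate transitions occur at position $0$. For the intermediate transitions this is clear because incrementing the last digit only affects position $0$ until a carry is forced. The delicate part is the boundary: one must confirm that when the last digit reaches $m_{\Beta,n+1,0}$ the next $\Beta$-integer is obtained by a carry that propagates exactly to position $n+1$, mirroring the way the digit change at position $n$ in $S(\Beta)$ lifts to a digit change at position $n+1$ in $\Beta$. Here I would invoke Proposition~\ref{Pro : Xk+1-Xk}, which describes precisely how consecutive $\Beta$-expansions relate through the values $M_{\Beta,n}$, together with the shift relation $\DB(\beta_0 x)$ versus $d_{S(\Beta)}(x)$ from Lemma~\ref{Lem : BandSB}.

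Once the single-gap correspondence is established, the proof concludes by summing over all gaps: the sequence $w_{S(\Beta)}$ lists the positions $w_k^{S(\Beta)}$ for all consecutive pairs of $S(\Beta)$-integers in increasing order, and since multiplication by $\beta_0$ preserves order and the filled-in $\Beta$-integers between $\beta_0 x_k$ and $\beta_0 x_{k+1}$ appear consecutively in $\N_{\Beta}$, the concatenation of the blocks $\psi_{\Beta}(w_k^{S(\Beta)})$ reproduces $w_{\Beta}$ letter by letter. This is exactly the statement $\psi_{\Beta}(w_{S(\Beta)})=w_{\Beta}$.
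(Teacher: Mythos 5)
Your proposal is correct and follows essentially the same route as the paper's proof: fix one gap between consecutive $S(\Beta)$-integers $\tilde{x}_k,\tilde{x}_{k+1}$ with coding letter $n$, multiply by $\beta_0$, and use Lemma~\ref{Lem : BandSB}, Lemma~\ref{Lem : NSB and NB}, Lemma~\ref{Lem:dSB-MSB} and Proposition~\ref{Pro : Xk+1-Xk} to identify the interpolating $\Beta$-integers $\beta_0\tilde{x}_k+b$ for $b\in\{0,\ldots,m_{\Beta,n+1,0}\}$ and read off the block $0^{m_{\Beta,n+1,0}}(n+1)$. The only point where the paper is more explicit is its appeal to Lemma~\ref{Lem : RadixOrder} to rule out any further $\Beta$-integers strictly between $\beta_0\tilde{x}_k$ and $\beta_0\tilde{x}_{k+1}$, a step you gesture at with ``largest admissible last digit'' but which is supplied by exactly the lemmas you cite.
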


\begin{proof}
Let $(x_k)_{k \in \N}$  be the strictly increasing sequence for which $\N_{\Beta} = \{x_k: k \in \N\}$ and let $(\tilde{x}_k)_{k \in \N}$  be the strictly increasing sequence for which $\N_{S(\Beta)} = \{\tilde{x}_k: k \in \N\}$. Write $w_{\Beta}=w_0w_1\cdots$ and $w_{S(\Beta)}=\tilde{w}_0\tilde{w}_1\cdots$ where the $w_k$'s and the $\tilde{w}_k$'s are letters in  $\N$. Let $k\in \N$ and let $n = \tilde{w}_k$. By Proposition~\ref{Pro : Xk+1-Xk}, we have $\tilde{x}_{k+1}-\tilde{x}_k=\beta_n\cdots\beta_1-M_{S(\Beta),n}=:\tilde{\Delta}_n$. By Lemma~\ref{Lem:dSB-MSB}, we also have
\[
	d_{S(\Beta)}(M_{S(\Beta),n})
	=m_{\Beta,n+1,n}\cdots m_{\Beta,n+1,1}\bigcdot 0^\omega.
\]
Proposition~\ref{Pro : Xk+1-Xk} allows us to write
\begin{alignat*}{7}
	d_{S(\Beta)}(\tilde{x}_k)
	&= a_{N-1}\cdots a_{n+1}&a_n \quad\;&\;  m_{\Beta,n+1,n}&\cdots &\; m_{\Beta,n+1,1}\;&\bigcdot \; 0^\omega\\
	d_{S(\Beta)}(\tilde{x}_{k+1})
	&=a_{N-1}\cdots a_{n+1}&(a_n+1) &\qquad 0&\cdots &\qquad 0\;&\bigcdot \; 0^\omega
\end{alignat*}
for some index $N> n$. By Lemma~\ref{Lem : NSB and NB}, both $\beta_0 \tilde{x}_k$ and $\beta_0\tilde{x}_{k+1}$ belong to $\N_{\Beta}$. By Lemma~\ref{Lem : BandSB}, their $\Beta$-expansions are
\begin{alignat*}{7}
	\DB(\beta_0\tilde{x}_k)
	&=a_{N-1}\cdots
	& a_n \quad\;	
	& m_{\Beta,n+1,n}	
	& \cdots
	& m_{\Beta,n+1,1} 	
	& \;0 	
	& \bigcdot 0^\omega\\
	\DB(\beta_0\tilde{x}_{k+1})
	&=a_{N-1}\cdots 	
	&(a_n+1) 		
	&\qquad 0 	
	& \cdots
	& \qquad 0 			
	& 0 	
	& \bigcdot 0^\omega.
\end{alignat*}
Lemma~\ref{Lem : RadixOrder} implies that the $\Beta$-expansion of every $\Beta$-integer laying between  $\beta_0\tilde{x}_k$ and $\beta_0\tilde{x}_{k+1}$ is of the form
\[
	a_{N-1}\cdots  a_n  m_{\Beta,n+1,n}\cdots m_{\Beta,n+1,1}  b\bigcdot  0^\omega
\]
where $b\in \{0,\ldots, m_{\Beta,n+1,0} \}$. We have
\[
	\val_{\Beta}(a_{N-1}\cdots a_n m_{\Beta,n+1,n}\cdots m_{\Beta,n+1,1} b\bigcdot 0^\omega)
	=\beta_0\tilde{x}_k+b.
\]
The letter assigned by Definition~\ref{Def : Word} to the distance between $\beta_0\tilde{x}_k+b$ and $\beta_0\tilde{x}_k+b+1$ is $0$ for each $b \in \{0,\ldots, m_{\Beta,n+1,0}-1 \}$, whereas the letter assigned to the distance between $\beta_0\tilde{x}_k+m_{\Beta,n+1,0}$ and $\beta_0\tilde{x}_{k+1}$ is $n+1$. Since $k$ has been chosen arbitrarily, the sequence $w_{\Beta}$ is the infinite concatenation of the blocks
\[
 	\psi_{\Beta}(\tilde{w}_k)=0^{m_{\Beta,n+1,0}}(n+1)
\]
for each $k$. Otherwise stated, we have $\psi_{\Beta}(w_{S(\Beta)})=w_{\Beta}$, as announced.
\end{proof}

From the proof of the previous proposition, we derive the relation between distances $\Delta_n$ between consecutive $\Beta$-integers and distances $\widetilde{\Delta}_n$ between consecutive $S(\Beta)$-integers.

\begin{corollary}
\label{Lem : relationDelta}
For all $n\in\N$, we have
\[
	\beta_0 \widetilde{\Delta}_n
	=  \Delta_{n+1}+m_{\Beta,n+1,0}.
\]
\end{corollary}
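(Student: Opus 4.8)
The plan is to establish the identity by a direct computation that feeds the key relation~\eqref{eq:M-SB-B}, proved inside Lemma~\ref{Lem:dSB-MSB}, into the definitions of the two gap sequences. First I would unwind the definitions: the quantity $\widetilde{\Delta}_n=\beta_n\cdots\beta_1-M_{S(\Beta),n}$ is the gap associated to the shifted base $S(\Beta)$, whose $j$-th base is $\beta_{j+1}$, while $\Delta_{n+1}=\beta_n\cdots\beta_0-M_{\Beta,n+1}$. Multiplying the former by $\beta_0$ gives
\[
	\beta_0\widetilde{\Delta}_n=\beta_0\,\beta_n\cdots\beta_1-\beta_0 M_{S(\Beta),n}=\beta_n\cdots\beta_0-\beta_0 M_{S(\Beta),n},
\]
where the prefactor $\beta_0$ has been absorbed into the product $\beta_n\cdots\beta_1$ to produce $\beta_n\cdots\beta_0$.

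Next I would substitute relation~\eqref{eq:M-SB-B}, that is $M_{S(\Beta),n}=(M_{\Beta,n+1}-m_{\Beta,n+1,0})/\beta_0$, so that $\beta_0 M_{S(\Beta),n}=M_{\Beta,n+1}-m_{\Beta,n+1,0}$. Plugging this in yields
\[
	\beta_0\widetilde{\Delta}_n=\beta_n\cdots\beta_0-M_{\Beta,n+1}+m_{\Beta,n+1,0},
\]
and recognising the first two terms as $\Delta_{n+1}$ gives exactly the claimed equality.

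Since all the genuine content is carried by Lemma~\ref{Lem:dSB-MSB}, there is no real obstacle beyond bookkeeping; the only points requiring care are the index shift in $S(\Beta)$ (ensuring that $\beta_0\cdot\beta_n\cdots\beta_1$ collapses to $\beta_n\cdots\beta_0$) and the correct alignment of the index $n+1$ appearing in $M_{\Beta,n+1}$ and $m_{\Beta,n+1,0}$. Alternatively, I note that the relation can be read off directly from the proof of Proposition~\ref{Pro : Substitution}: there the $\Beta$-integers in the interval $[\beta_0\tilde{x}_k,\beta_0\tilde{x}_{k+1}]$ are shown to be $\beta_0\tilde{x}_k+b$ for $b\in\{0,\ldots,m_{\Beta,n+1,0}\}$ followed by $\beta_0\tilde{x}_{k+1}$, and the final gap, of length $\beta_0\widetilde{\Delta}_n-m_{\Beta,n+1,0}$, carries the letter $n+1$ and hence equals $\Delta_{n+1}$; this reproduces the same identity.
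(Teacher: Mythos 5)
Your proposal is correct and follows essentially the same route as the paper's proof: both express $\widetilde{\Delta}_n=\beta_n\cdots\beta_1-M_{S(\Beta),n}$ and $\Delta_{n+1}=\beta_n\cdots\beta_0-M_{\Beta,n+1}$ via Proposition~\ref{Pro : Xk+1-Xk} and then substitute the identity~\eqref{eq:M-SB-B} from the proof of Lemma~\ref{Lem:dSB-MSB} to obtain $\beta_0\widetilde{\Delta}_n-\Delta_{n+1}=M_{\Beta,n+1}-\beta_0 M_{S(\Beta),n}=m_{\Beta,n+1,0}$. Your alternative reading via the gap structure in the proof of Proposition~\ref{Pro : Substitution} matches how the paper motivates the corollary, but the formal argument is the same computation.
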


\begin{proof}
Let $n\in\N$. By Proposition~\ref{Pro : Xk+1-Xk} and Definition~\ref{Def : Word}, we have
$\widetilde{\Delta}_n
=\beta_n\cdots\beta_1-M_{S(\Beta),n}$
and
$\Delta_{n+1}
=\beta_n\cdots\beta_0-M_{\Beta,n+1}$.
Then, by the equality~\eqref{eq:M-SB-B} obtained in the proof of Lemma~\ref{Lem:dSB-MSB}, we see that
$\beta_0 \widetilde{\Delta}_n-\Delta_{n+1}
=M_{\Beta,n+1}-\beta_0M_{S(\Beta),n}
=m_{\Beta,n+1,0}$.
\end{proof}

We are now ready to prove that the $\Beta$-integers form an $S$-adic system.

\begin{corollary}
\label{Cor : S-adic}
The infinite sequence $w_{\Beta}$ is the $S$-adic sequence given by the sequence of substitutions $(\psi_{S^n(\Beta)})_{n\in \N}$ applied on the letter $0$, i.e.,
\[
	w_{\Beta}=\lim_{n\to +\infty} \psi_{\Beta} \circ \psi_{S(\Beta)}\circ \cdots \circ \psi_{S^n(\Beta)}(0).
\]
\end{corollary}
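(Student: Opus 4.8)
The plan is to iterate Proposition~\ref{Pro : Substitution} and then control the limit by a prefix argument. First I would observe that Proposition~\ref{Pro : Substitution} applies verbatim to every shifted base $S^j(\Beta)$, which is again a two-way real Cantor base; thus for every $j\ge 0$ one has $\psi_{S^j(\Beta)}(w_{S^{j+1}(\Beta)})=w_{S^j(\Beta)}$. A straightforward induction on $n$, chaining these identities, then yields
\[
	w_{\Beta}=\psi_{\Beta}\circ\psi_{S(\Beta)}\circ\cdots\circ\psi_{S^n(\Beta)}\big(w_{S^{n+1}(\Beta)}\big)\quad\text{for all }n\ge 0.
\]
Writing $\Phi_n=\psi_{\Beta}\circ\psi_{S(\Beta)}\circ\cdots\circ\psi_{S^n(\Beta)}$, this reads $w_{\Beta}=\Phi_n\big(w_{S^{n+1}(\Beta)}\big)$.

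Next I would show that $\Phi_n(0)$ is a prefix of $w_{\Beta}$ whose length tends to infinity, which by the definition of the $S$-adic limit gives the claim. The prefix property rests on the fact that, for any two-way real Cantor base $\mathcal C$, the sequence $w_{\mathcal C}$ begins with the letter $0$: indeed its initial letter codes the distance between $x_0=0$ and $x_1=1$, and since $d_{\mathcal C}(0)=0^\omega$ and $d_{\mathcal C}(1)=1\bigcdot 0^\omega$ differ only at index $0$, Definition~\ref{Def : Word} forces this first letter to be $0$. Applying this to $\mathcal C=S^{n+1}(\Beta)$, the word $w_{S^{n+1}(\Beta)}$ starts with $0$; since $\Phi_n$ is a morphism and hence preserves prefixes, $\Phi_n(0)$ is a prefix of $\Phi_n\big(w_{S^{n+1}(\Beta)}\big)=w_{\Beta}$.

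It remains to check that $|\Phi_n(0)|\to\infty$, which is where the only real care is needed. Each substitution $\psi_{S^j(\Beta)}$ sends a letter $a$ to $0^{m_{S^j(\Beta),a+1,0}}(a+1)$, a nonempty word, so all the $\Phi_n$ are non-erasing. Moreover $\psi_{S^{n+1}(\Beta)}(0)=0^{m}(1)$ with $m=m_{S^{n+1}(\Beta),1,0}$, and I would note that $m\ge 1$ because the largest $S^{n+1}(\Beta)$-integer below $\beta_{n+1}$ is the integer $\ceil{\beta_{n+1}}-1\ge 1$. Hence
\[
	\Phi_{n+1}(0)=\Phi_n\big(\psi_{S^{n+1}(\Beta)}(0)\big)=\Phi_n(0)^{m}\,\Phi_n(1),
\]
so that $|\Phi_{n+1}(0)|\ge |\Phi_n(0)|+|\Phi_n(1)|\ge |\Phi_n(0)|+1$. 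Thus the lengths strictly increase, $|\Phi_n(0)|\to\infty$, and the prefixes $\Phi_n(0)$ of $w_{\Beta}$ converge to $w_{\Beta}$ in the product topology. The main obstacle is precisely this last growth estimate: without knowing that $\psi_{\Beta}$ expands the letter $0$, i.e.\ that $m_{\Beta,1,0}\ge 1$, the iterated images could stabilize and fail to fill out all of $w_{\Beta}$; the inequality $\ceil{\beta_{n+1}}-1\ge 1$, valid since $\beta_{n+1}>1$, is exactly what rules this out.
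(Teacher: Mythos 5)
Your proposal is correct and takes essentially the same route as the paper's proof: iterate Proposition~\ref{Pro : Substitution} to see that each finite word $\psi_{\Beta}\circ\psi_{S(\Beta)}\circ\cdots\circ\psi_{S^n(\Beta)}(0)$ is a prefix of $w_{\Beta}$, and use $m_{S^{n}(\Beta),1,0}\ge 1$ (a consequence of $\beta_{n}>1$) to guarantee that the lengths of these prefixes strictly increase, so they converge to $w_{\Beta}$. Your explicit check that every sequence $w_{\mathcal{C}}$ begins with the letter $0$ (since $x_0=0$ and $x_1=1$ have expansions differing only at index $0$) is a detail the paper leaves implicit, but the argument is the same.
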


\begin{proof}
For all $n\in \Z$, we have $m_{S^n(\Beta),1,0}\ge 1$ since $\beta_n>1$. So, the image of $0$ under $\psi_{S^n(\Beta)}$ has length at least $2$ and starts with at least one zero. Hence, for all $n\in \N$, the (finite) word $\psi_{\Beta} \circ \psi_{S(\Beta)}\circ \cdots \circ \psi_{S^{n-1}(\Beta)}(0)$ is a strict prefix of $\psi_{\Beta} \circ \psi_{S(\Beta)}\circ \cdots \circ \psi_{S^n(\Beta)}(0)$. Therefore, the sequence of finite words $(\psi_{\Beta} \circ \psi_{S^{-1}(\Beta)}\circ \cdots \circ \psi_{S^{-n}(\Beta)}(0))_{n\in\N}$ converges to some limit infinite sequence with respect to the prefix distance. Moreover, it follows from Proposition~\ref{Pro : Substitution} that $\psi_{S^n(\Beta)}(w_{S^{n+1}(\Beta)})=w_{S^n(\Beta)}$ for all $n\in \Z$. This implies that for all $n\in\N$, the word $\psi_{\Beta} \circ \psi_{S(\Beta)}\circ \cdots \circ \psi_{S^n(\Beta)}(0)$ is a prefix of $w_{\Beta}$. Hence the limit is indeed the infinite sequence $w_{\Beta}$.
\end{proof}

\section{Computing the distances between consecutive $\Beta$-integers thanks to the quasi-greedy expansions}
\label{Sec : quasi-greedy}

In the paper~\cite{CharlierCisternino2021}, the admissible Cantor base expansions were characterized. In order to be able to exploit the results from~\cite{CharlierCisternino2021}, we extend the definition of \emph{quasi-greedy expansions of 1} to two-way Cantor real bases. For $\Beta=(\beta_n)_{n\in\Z}$ a two-way Cantor real base, we consider the limit
\[
	\lim_{x\to 1^-}\DB(x).
\]
This sequence has the form $0\, \bigcdot\, a_{-1}a_{-2}a_{-3}\cdots$ where the first digit $a_{-1}$ is positive. Moreover, it does not end with a tail of zeroes and it evaluates to $1$, meaning that $\val_{\Beta}(\DB^*(1))=1$. We then define the \emph{quasi-greedy expansion of $1$} as the infinite word
\[
	\DB^*(1)=a_{-1}a_{-2}a_{-3}\cdots
\]
made of the digits after the radix point. We can then work with the usual lexicographic order between infinite words.

\begin{theorem}\cite[Theorem 26]{CharlierCisternino2021}
\label{CC}
A sequence $0\bigcdot c_{-1}c_{-2} \cdots $ is the $\Beta$-expansion of some number $x \in [0,1)$ if and only if $c_{n-1}c_{n-2}\cdots <_{\lex} d_{S^n(\Beta)}^*(1)$ for all indices $n\le 0$.
\end{theorem}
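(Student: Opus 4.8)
The plan is to reduce both implications to the greedy-admissibility conditions already available, by interpreting each lexicographic inequality at level $n$ as the statement that the remainder produced by the greedy algorithm, after reading the digits down to position $n$, stays below $1$. Concretely, for $n\le 0$ write $r_{n-1}$ for the tail value
\[
	r_{n-1}=\val_{S^n(\Beta)}(0\bigcdot c_{n-1}c_{n-2}\cdots)=\sum_{m\ge 1}\frac{c_{n-m}}{\beta_{n-1}\cdots\beta_{n-m}}.
\]
Since the negative part of the base $S^n(\Beta)$ is exactly $(\beta_{n-1},\beta_{n-2},\dots)$, the word $c_{n-1}c_{n-2}\cdots$ is an $S^n(\Beta)$-representation of $r_{n-1}$, with $r_{-1}=x$ in the case $n=0$.

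The first and main step is a one-level lemma: the tail value $r_{n-1}$ lies in $[0,1)$ if and only if $c_{n-1}c_{n-2}\cdots<_{\lex} d^*_{S^n(\Beta)}(1)$. I would obtain this from the definition $d^*_{S^n(\Beta)}(1)=\lim_{x\to 1^-}d_{S^n(\Beta)}(x)$ together with Lemma~\ref{Lem : RadixOrder}: on sequences with a leading $0$ the radix order coincides with the lexicographic order on the tails, so Lemma~\ref{Lem : RadixOrder}(1) turns the magnitude comparison $r_{n-1}<1$ into a lexicographic comparison against the supremum of the expansions of numbers below $1$, which is precisely $d^*_{S^n(\Beta)}(1)$.

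The second step is the elementary observation that greediness is equivalent to all remainders being admissible. If $c_n=\floor{\beta_n r_n}$ and $r_{n-1}=\beta_n r_n-c_n$, then $r_{n-1}\in[0,1)$; conversely, because $\floor{\beta_n r_n}$ is the unique integer in $(\beta_n r_n-1,\beta_n r_n]$, the identity $c_n=\beta_n r_n-r_{n-1}$ forces $c_n=\floor{\beta_n r_n}$ as soon as $r_{n-1}\in[0,1)$. Hence $0\bigcdot c_{-1}c_{-2}\cdots$ is the $\Beta$-expansion of $x=r_{-1}\in[0,1)$ precisely when $r_{n-1}\in[0,1)$ for every $n\le 0$. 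Combining this with the one-level lemma yields the characterization in both directions: for necessity the remainders generated by the greedy algorithm automatically lie in $[0,1)$, and for sufficiency the lexicographic hypotheses force every remainder into $[0,1)$, so the digits reproduce the greedy algorithm and $x=\val_{\Beta}(0\bigcdot c_{-1}c_{-2}\cdots)\in[0,1)$ by the $n=0$ instance.

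I expect the delicate point to be the strictness in the one-level lemma, namely excluding tails of value exactly $1$: a sequence with leading $0$ and value $1$ is a non-greedy representation of $1$, and I must rule out that such a sequence is lexicographically below $d^*_{S^n(\Beta)}(1)$. This is handled using that $d^*_{S^n(\Beta)}(1)$ is quasi-greedy---it does not end in a tail of zeros and each of its own shifts is $\le_{\lex}$ the corresponding shifted quasi-greedy expansion---so an overflow at one level would propagate and contradict the strict inequalities required simultaneously at all levels $n\le 0$. Convergence of the defining series, needed for every $r_{n-1}$ to be well defined, is guaranteed by Lemma~\ref{Lem : GreedyCondition} together with the standing assumption $\prod_{n=1}^{+\infty}\beta_{-n}=+\infty$.
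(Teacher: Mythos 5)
There is a genuine gap, and it sits exactly where the theorem's content lies. (Note that the paper itself offers no proof here: the statement is imported from \cite[Theorem 26]{CharlierCisternino2021}, so the comparison is with that proof, which is a multi-level, Parry-style induction.) Your ``one-level lemma'' is false in the direction your sufficiency argument needs: a single lexicographic inequality $c_{n-1}c_{n-2}\cdots <_{\lex} d^*_{S^n(\Beta)}(1)$ does \emph{not} imply $r_{n-1}<1$, because lower-order digits are unconstrained by a comparison that is decided at the first discrepancy. Concretely, take $c_{n-1}=0$, $c_{n-2}=K$ and $c_{n-m}=0$ for $m\ge 3$: since the first digit of $d^*_{S^n(\Beta)}(1)$ is positive, one has $0K0^\omega <_{\lex} d^*_{S^n(\Beta)}(1)$, yet
\[
	r_{n-1}=\frac{K}{\beta_{n-1}\beta_{n-2}}
\]
is arbitrarily large for large $K$. (Such a $c$ of course violates the condition at level $n-1$ --- which is precisely why the theorem quantifies over all $n\le 0$ and why the equivalence cannot be localized level by level.) The correct argument must use the inequalities at \emph{all} levels jointly to bound every tail value, via a digit-by-digit comparison of $c$ with the quasi-greedy expansions and an induction or limiting argument; your proposal replaces this, the actual crux, with the false equivalence, and the closing remark about ``overflow propagating'' gestures at the right mechanism without supplying it. Two supporting invocations are also unsound: Lemma~\ref{Lem : RadixOrder}(1) compares $\Beta$-\emph{expansions}, i.e.\ greedy sequences, so applying it to $c_{n-1}c_{n-2}\cdots$ presupposes the admissibility you are trying to prove; and Lemma~\ref{Lem : GreedyCondition} likewise characterizes greedy expansions, so it cannot be quoted to guarantee convergence of the series defining $r_{n-1}$ for an arbitrary sequence satisfying the hypotheses --- convergence has to be extracted from the same multi-level bound (e.g.\ by bounding finite truncations).

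What does survive is the easy half and the scaffolding. Your observation that $0\bigcdot c_{-1}c_{-2}\cdots$ is greedy if and only if every remainder $r_{n-1}$ lies in $[0,1)$ is correct, since $\floor{\beta_n r_n}$ is the unique integer $c_n$ with $\beta_n r_n - c_n\in[0,1)$. The necessity direction is also essentially right: if $c$ is the $\Beta$-expansion of $x\in[0,1)$, each tail is the greedy $S^n(\Beta)$-expansion of $r_{n-1}\in[0,1)$, and then maximality (Lemma~\ref{Lem : RadixOrder}(2)) plus monotonicity of greedy expansions and $\val_{S^n(\Beta)}(0\bigcdot d^*_{S^n(\Beta)}(1))=1\ne r_{n-1}$ give the strict inequality $c_{n-1}c_{n-2}\cdots \le_{\lex} d_{S^n(\Beta)}(r_{n-1})$ restricted to tails, with $d_{S^n(\Beta)}(r_{n-1})<_{\lex} d^*_{S^n(\Beta)}(1)$. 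But for sufficiency you must prove, from the full family of hypotheses, first that $r_{n-1}\le 1$ for all $n$ and then the strict inequality; only the second of these is acknowledged in your ``delicate point,'' while the first --- the main estimate --- is missing.
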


Note that the previous theorem is also valid for positive indices $n$ since for $n > 0$, the digit $c_{n-1}$ is zero and the leading digit of $d_{S^n(\Beta)}^*(1)$ is positive.

In view of Theorem~\ref{CC}, we see that we need to compare all shifts of a sequence to the quasi-greedy expansions of $1$ with respect to the corresponding shifted bases $S^n(\Beta)$. From now on, for all $n\in \N$, we write
\[
	d_{S^n(\Beta)}^*(1)= d_{n,1}d_{n,2}\cdots.
\]
Observe that unlike previously, we have increasing indices as we read from left to right. With this notation, we can express the distances between consecutive $\Beta$-integers in terms of the digits of the quasi-greedy expansions in the corresponding bases.

\begin{proposition}
\label{Pro : DistancesAndQuasiGreedy}
For all $n\in \N$, we have
\[
	\DB(M_{\Beta,n})=d_{n,1}\cdots d_{n,n}\bigcdot 0^\omega
	\quad \text{ and }\quad
	\Delta_n=\val_{\Beta}(0\bigcdot d_{n,n+1}d_{n,n+2}\cdots).
\]
\end{proposition}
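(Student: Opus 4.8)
The plan is to relate the greedy algorithm near the product $\beta_{n-1}\cdots\beta_0$ to the quasi-greedy expansion of $1$ in the shifted base $S^n(\Beta)$, and then to identify $M_{\Beta,n}$ as the truncation of the resulting limit expansion at the radix point. First I would iterate Lemma~\ref{Lem : BandSB}: dividing successively by $\beta_0,\beta_1,\dots,\beta_{n-1}$ yields, for every $x>0$,
\[
	\DB(x)=a_{N-1}\cdots a_0\bigcdot a_{-1}\cdots
	\quad\Longleftrightarrow\quad
	d_{S^n(\Beta)}\!\Big(\tfrac{x}{\beta_{n-1}\cdots\beta_0}\Big)=a_{N-1}\cdots a_n\bigcdot a_{n-1}\cdots a_0a_{-1}\cdots,
\]
so that scaling by $\beta_{n-1}\cdots\beta_0$ shifts the radix point $n$ places. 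Applying this as $x\uparrow\beta_{n-1}\cdots\beta_0$, the argument $x/(\beta_{n-1}\cdots\beta_0)$ tends to $1^-$, its integer digits vanish, and its fractional digits converge to $d^*_{S^n(\Beta)}(1)=d_{n,1}d_{n,2}\cdots$. Reading the equivalence backwards, $\DB(x)$ converges digitwise to $L:=d_{n,1}\cdots d_{n,n}\bigcdot d_{n,n+1}d_{n,n+2}\cdots$, and since $\val_{S^n(\Beta)}(0\bigcdot d_{n,1}d_{n,2}\cdots)=1$, scaling gives $\val_{\Beta}(L)=\beta_{n-1}\cdots\beta_0$.

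Next I would set $z:=\val_{\Beta}(d_{n,1}\cdots d_{n,n}\bigcdot 0^\omega)$ and check that $d_{n,1}\cdots d_{n,n}\bigcdot 0^\omega$ is genuinely the greedy expansion $\DB(z)$, so that $z\in\N_{\Beta}$. Using the admissibility criterion of Theorem~\ref{CC}: the suffixes starting at or after the radix point are $0^\omega$, which is lexicographically smallest and hence admissible; a suffix starting at integer position $m-1$ (with $1\le m\le n$) equals $d_{n,n-m+1}\cdots d_{n,n}0^\omega$, which is lexicographically strictly smaller than the full suffix $d_{n,n-m+1}d_{n,n-m+2}\cdots$ of $d^*_{S^n(\Beta)}(1)$ because the quasi-greedy expansion does not end in zeros. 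That full suffix is in turn $\le_{\lex}d^*_{S^m(\Beta)}(1)$: this inequality follows by applying Theorem~\ref{CC} in the base $S^n(\Beta)$ to the admissible expansions of reals approaching $1^-$ and passing to the limit (the strict comparisons become non-strict). Hence every relevant suffix is $<_{\lex}$ the appropriate quasi-greedy expansion, so $\DB(z)=d_{n,1}\cdots d_{n,n}\bigcdot 0^\omega$.

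I would then prove $z=M_{\Beta,n}$. Splitting $\val_{\Beta}$ across the radix point gives $z<\beta_{n-1}\cdots\beta_0$, since $\val_{\Beta}(L)-z=\val_{\Beta}(0\bigcdot d_{n,n+1}d_{n,n+2}\cdots)>0$. If some $\Beta$-integer $z'$ satisfied $z<z'<\beta_{n-1}\cdots\beta_0$, then Lemma~\ref{Lem : RadixOrder} would force $\DB(z)<_{\rad}\DB(z')<_{\rad}L$; as $\DB(z)$ and $L$ share the integer part $d_{n,1}\cdots d_{n,n}$, the $\Beta$-integer $z'$ would have to share it too and carry a fractional part strictly between $0^\omega$ and $d_{n,n+1}\cdots$, which is impossible for a $\Beta$-integer. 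Hence $z=M_{\Beta,n}$, giving the first identity. Finally, additivity of $\val_{\Beta}$ across the radix point yields $\beta_{n-1}\cdots\beta_0=\val_{\Beta}(L)=M_{\Beta,n}+\val_{\Beta}(0\bigcdot d_{n,n+1}d_{n,n+2}\cdots)$, whence $\Delta_n=\val_{\Beta}(0\bigcdot d_{n,n+1}d_{n,n+2}\cdots)$.

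The step I expect to be the main obstacle is the admissibility verification: carefully deriving the self-admissibility inequalities $d_{n,n-m+1}d_{n,n-m+2}\cdots\le_{\lex}d^*_{S^m(\Beta)}(1)$ across the various shifted bases $S^m(\Beta)$, keeping the index bookkeeping straight and correctly tracking strict versus non-strict lexicographic comparisons in the passage to the limit $x\uparrow\beta_{n-1}\cdots\beta_0$. Everything else reduces to the scaling relation of Lemma~\ref{Lem : BandSB} and the monotonicity of $\DB$ in the radix order from Lemma~\ref{Lem : RadixOrder}.
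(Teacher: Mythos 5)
Your proof is correct, and at bottom it runs on the same three ingredients as the paper's: the scaling relation of Lemma~\ref{Lem : BandSB}, the admissibility criterion of Theorem~\ref{CC}, and the fact that $\val_{S^n(\Beta)}(0\bigcdot d_{n,1}d_{n,2}\cdots)=1$, followed by the identical subtraction argument for $\Delta_n$. The organization differs, though. The paper proves the first identity by a symmetric lexicographic sandwich: Theorem~\ref{CC} shows the truncation $0\bigcdot d_{n,1}\cdots d_{n,n}0^\omega$ is admissible in base $S^n(\Beta)$, which (after scaling) gives $d_{n,1}\cdots d_{n,n}\le_{\lex}m_{\Beta,n,n-1}\cdots m_{\Beta,n,0}$; then it applies Theorem~\ref{CC} to $d_{S^n(\Beta)}\bigl(M_{\Beta,n}/(\beta_{n-1}\cdots\beta_0)\bigr)$ to get the reverse inequality. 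You instead build the limit word $L$ as $x\uparrow\beta_{n-1}\cdots\beta_0$ and prove maximality of $z$ directly; structurally these are the same two inequalities in different clothing. What your version buys is the explicit suffix-by-suffix verification (with the right index bookkeeping, comparing the suffix at integer position $m-1$ with $d_{S^m(\Beta)}^*(1)$, strictness coming from the absence of a tail of zeros in quasi-greedy expansions) that the paper compresses into a single citation of Theorem~\ref{CC}; that detail is genuinely needed and you supply it correctly. One imprecision to fix: the step $\DB(z')<_{\rad}L$ is not literally an instance of Lemma~\ref{Lem : RadixOrder}, since that lemma compares the expansions of two numbers (or representations of one and the same number), while $L$ is a mere representation of $\beta_{n-1}\cdots\beta_0$, not of $z'$. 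The repair is exactly the paper's second application of Theorem~\ref{CC}: since $z'<\beta_{n-1}\cdots\beta_0$, its expansion has integer part $b_{n-1}\cdots b_0$ of length at most $n$, and admissibility of $0\bigcdot b_{n-1}\cdots b_00^\omega$ in base $S^n(\Beta)$ forces $b_{n-1}\cdots b_00^\omega<_{\lex}d_{n,1}d_{n,2}\cdots$, hence $b_{n-1}\cdots b_0\le_{\lex}d_{n,1}\cdots d_{n,n}$; as the fractional part of a $\Beta$-integer is $0^\omega$, this yields $z'\le z$, the desired contradiction.
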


\enlargethispage{2\baselineskip}
\begin{proof}
Let $n\in\N$. As previously, write $\DB(M_{\Beta,n})=m_{\Beta,n,n-1}\cdots m_{\Beta,n,0}\bigcdot 0^\omega$. On the one hand, by Theorem~\ref{CC}, the truncated infinite sequence $0\bigcdot d_{n,1}\cdots d_{n,n} 0^\omega$ is the $S^n(\Beta)$-expansion of some $x\in[0,1)$. By Lemma~\ref{Lem : BandSB}, we get that $\DB(\beta_{n-1}\cdots \beta_0x)= d_{n,1}\cdots d_{n,n}\bigcdot 0^\omega$. By definition of $M_{\Beta,n}$ and by Lemma \ref{Lem : RadixOrder}, we obtain that $d_{n,1}\cdots d_{n,n}\bigcdot 0^\omega\le_{\rad} \DB(M_{\Beta,n})$, and hence that $d_{n,1}\cdots d_{n,n}\le_{\lex} m_{\Beta,n,n-1}\cdots m_{\Beta,n,0}$. On the other hand, Lemma~\ref{Lem : BandSB} also implies that
\[
	d_{S^n(\Beta)}\left(\frac{M_{\Beta,n}}{\beta_{n-1}\cdots \beta_0}\right)=0\bigcdot m_{\Beta,n,n-1}\cdots m_{\Beta,n,0} 0^\omega.
\]
But then Theorem~\ref{CC} gives us that $m_{\Beta,n,n-1}\cdots m_{\Beta,n,0} 0^\omega \le_{\lex} d_{S^n(\Beta)}^*(1)$, and hence we obtain that $m_{\Beta,n,n-1}\cdots m_{\Beta,n,0} \le_{\lex} d_{n,1}\cdots d_{n,n}$. Therefore, we get the first desired equality, i.e., $\DB(M_{\Beta,n})=d_{n,1}\cdots d_{n,n}\bigcdot 0^\omega $.

Next, we compute
\begin{align*}
	\val_{\Beta}(0\bigcdot d_{n,n+1}d_{n,n+2}\cdots)
	&=\val_{\Beta}(d_{n,1}\cdots d_{n,n}\bigcdot d_{n,n+1}d_{n,n+2}\cdots)-\val_{\Beta}(d_{n,1}\cdots d_{n,n}\bigcdot 0^\omega) \\
	&=\beta_{n-1}\cdots\beta_0 \cdot \val_{S^n(\Beta)}(0\bigcdot d_{n,1} d_{n,2}\cdots)-M_{\Beta,n} \\
	&=\beta_{n-1}\cdots\beta_0-M_{\Beta,n} \\
	&=\Delta_n
\end{align*} 
which gives us the second equality of the statement. 
\end{proof}

\begin{corollary}
\label{Cor:limit}
For all $n\in\N$, we have $
	0\bigcdot d_{n,n+1}d_{n,n+2}\cdots
	=\lim_{x\to (\Delta_n)^-} d_{\Beta}(x)$.
\end{corollary}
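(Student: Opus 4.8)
The corollary claims that for all $n \in \N$:
$$0 \bigcdot d_{n,n+1} d_{n,n+2} \cdots = \lim_{x \to (\Delta_n)^-} d_{\Beta}(x)$$

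From Proposition~\ref{Pro : DistancesAndQuasiGreedy}, we know:
- $\DB(M_{\Beta,n}) = d_{n,1} \cdots d_{n,n} \bigcdot 0^\omega$
- $\Delta_n = \val_{\Beta}(0 \bigcdot d_{n,n+1} d_{n,n+2} \cdots)$

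So the claim is that the sequence $0 \bigcdot d_{n,n+1} d_{n,n+2} \cdots$, which evaluates to $\Delta_n$, is actually the limit of $\Beta$-expansions as $x$ approaches $\Delta_n$ from below.

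**Connecting to the quasi-greedy framework:**

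The left-hand side $0 \bigcdot d_{n,n+1} d_{n,n+2} \cdots$ is a specific sequence. The right-hand side is the limit of greedy expansions from below - this is exactly the "quasi-greedy expansion" type construction described in Section 5.

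Let me think about what $\lim_{x \to (\Delta_n)^-} d_{\Beta}(x)$ means. As defined at the start of Section 5, for the value 1:
$$\lim_{x \to 1^-} \DB(x)$$
gives the quasi-greedy expansion.

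**My proof plan:**

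---

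The plan is to show that $0\bigcdot d_{n,n+1}d_{n,n+2}\cdots$ is precisely the quasi-greedy expansion of the number $\Delta_n$, which by definition equals $\lim_{x\to(\Delta_n)^-}d_{\Beta}(x)$. The key input is Proposition~\ref{Pro : DistancesAndQuasiGreedy}, which already establishes that $\val_{\Beta}(0\bigcdot d_{n,n+1}d_{n,n+2}\cdots)=\Delta_n$, so the sequence in question has the correct value; the remaining content is to identify it as the limit of greedy expansions from below.

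First I would recall that the quasi-greedy expansion of a real number $y>0$ is characterized as the lexicographically greatest admissible sequence (in the sense of Theorem~\ref{CC}) that evaluates to $y$ and does not terminate with a tail of zeros. The strategy is therefore twofold: (i) verify that $0\bigcdot d_{n,n+1}d_{n,n+2}\cdots$ is admissible, i.e., satisfies the lexicographic conditions of Theorem~\ref{CC} with respect to the shifted bases, and (ii) verify that it does not end in $0^\omega$, so that it is genuinely the \emph{quasi}-greedy rather than the greedy expansion.

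For step (i), I would observe that the tail $d_{n,n+1}d_{n,n+2}\cdots$ is a suffix of the quasi-greedy expansion $d_{S^n(\Beta)}^*(1)=d_{n,1}d_{n,2}\cdots$, and that suffixes of quasi-greedy expansions inherit the required admissibility inequalities. Concretely, for each shift one must compare $d_{n,n+j}d_{n,n+j+1}\cdots$ against $d_{S^{n+j}(\Beta)}^*(1)$; this follows from the corresponding inequalities already known for the full quasi-greedy expansion of $1$ under the base $S^n(\Beta)$, reindexed via the relation between the bases $S^{n+j}(\Beta)$ and the appropriate shift. The main obstacle here is bookkeeping the shift indices carefully, since the quasi-greedy expansion relative to $S^n(\Beta)$ must be related to the expansions relative to the further-shifted bases $S^{n+j}(\Beta)$; this is where the self-referential character of quasi-greedy expansions (each suffix is dominated by the quasi-greedy expansion of $1$ in the correspondingly shifted base) does the work, and one should invoke the characterization from Theorem~\ref{CC} together with the defining property of $d^*$.

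For step (ii), non-termination is immediate: since $\Delta_n>0$ and $\Delta_n$ is \emph{not} itself a $\Beta$-integer-scaled value that would force a finite expansion (more precisely, the sequence $d_{S^n(\Beta)}^*(1)$ by construction never ends in zeros), its suffix also cannot end in $0^\omega$. Finally, combining admissibility, correct value, and non-termination, the uniqueness characterization of the quasi-greedy expansion identifies $0\bigcdot d_{n,n+1}d_{n,n+2}\cdots$ with $\lim_{x\to(\Delta_n)^-}d_{\Beta}(x)$, completing the proof. I expect the index-tracking in step (i) to be the genuinely delicate part, whereas the value and non-termination claims are essentially bookkeeping on top of Proposition~\ref{Pro : DistancesAndQuasiGreedy} and the definition of $d^*$.
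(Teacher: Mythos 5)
Your overall skeleton (correct value via Proposition~\ref{Pro : DistancesAndQuasiGreedy}, plus admissibility, plus non-termination, then invoke a characterization of quasi-greedy expansions) contains two genuine gaps. First, step (i) cannot be carried out with Theorem~\ref{CC} as you state it: that theorem characterizes greedy expansions via \emph{strict} lexicographic inequalities, and the tail $d_{n,n+1}d_{n,n+2}\cdots$ typically violates them. For instance, when $\Beta$ is an alternate base with purely periodic quasi-greedy expansions (as in the paper's running example, where $d_{S(\Beta)}^*(1)=(10)^\omega$), shifting the tail by a multiple of the period reproduces the quasi-greedy expansion of $1$ in the correspondingly shifted base \emph{exactly}, so the required strict inequality degenerates to an equality; the sequence $0\bigcdot d_{n,n+1}d_{n,n+2}\cdots$ is then not the $\Beta$-expansion of any $x\in[0,1)$, and only a weak-inequality criterion---which the paper never formalizes---could certify it. Second, and more seriously, your concluding step rests on the claim that the quasi-greedy expansion of an arbitrary $y>0$ is the lexicographically greatest non-terminating admissible representation of $y$, and that this ``by definition'' equals $\lim_{x\to y^-}\DB(x)$. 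In this paper the quasi-greedy expansion is defined only for $y=1$ (as that very limit); no existence, uniqueness, or maximality statement for other values is available, and proving one (that the left limit exists, evaluates to $y$, and that a weakly admissible non-terminating sequence of value $y$ must coincide with it) is essentially the entire content of the corollary. Note moreover that the paper \emph{deduces} the injectivity of the value map on such tails from Corollary~\ref{Cor:limit} in the proof of Proposition~\ref{Prop:Parry-distances}, so importing that uniqueness here as an input would make the logic circular.

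The paper's own proof needs neither admissibility nor any maximality characterization: it transports the defining limit at $1$. Since $\Delta_n=\beta_{n-1}\cdots\beta_0-M_{\Beta,n}$ and $\DB(M_{\Beta,n})=d_{n,1}\cdots d_{n,n}\bigcdot 0^\omega$ by Proposition~\ref{Pro : DistancesAndQuasiGreedy}, the increasing affine map $x\mapsto\beta_{n-1}\cdots\beta_0\,x-M_{\Beta,n}$ carries a left neighborhood of $1$ onto a left neighborhood of $\Delta_n$. For $x<1$ close enough to $1$, the expansion $d_{S^n(\Beta)}(x)$ begins with $0\bigcdot d_{n,1}\cdots d_{n,n}$ by the definition of $d_{S^n(\Beta)}^*(1)$, and Lemma~\ref{Lem : BandSB}, iterated $n$ times, gives $\DB(\beta_{n-1}\cdots\beta_0\,x)=d_{n,1}\cdots d_{n,n}\bigcdot c_{n+1}c_{n+2}\cdots$, whose fractional digits constitute the greedy expansion of the remainder $\beta_{n-1}\cdots\beta_0\,x-M_{\Beta,n}$; letting $x\to 1^-$, the digits $c_{n+j}$ converge to $d_{n,n+j}$, which is precisely the assertion. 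If you wish to keep your route, you would first have to state and prove the limit characterization of quasi-greedy expansions of arbitrary positive values in two-way Cantor bases; otherwise the transport argument above is the economical fix.
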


\begin{proof}
This is a consequence of Lemma~\ref{Lem : BandSB} and Proposition~\ref{Pro : DistancesAndQuasiGreedy}.
\end{proof}

\section{$\Beta$-integers  in  systems  associated with alternate bases}
\label{Sec : U-integers in alternate bases}

A two-way Cantor real base $\Beta=(\beta_n)_{n\in \Z}$ is called an \emph{alternate base} if  it is periodic, that is, if there exists  a positive integer $p$ such that $\beta_{n+p}=\beta_n$ for all $n\in \Z$. In this case, we simply denote $\Beta=(\beta_{p-1},\ldots,\beta_0)$ and the integer $p$ is called the \emph{period} of $\Beta$.

\begin{definition}
An alternate base $\Beta$ is called \emph{Parry} if  the sequence  $d_{S^i(\Beta)}^*(1)$ is eventually periodic for every  $i \in\{0,\ldots, p-1\}$.
\end{definition}

The above notion allows us to characterize alternate bases  $\Beta$ for which the set of $\Beta$-integers has a finite number of distances between consecutive elements.

\begin{proposition}
\label{Prop:Parry-distances}
Let $\Beta=(\beta_{p-1},\ldots,\beta_0)$ be an alternate base. Then the set of distances between consecutive $\Beta$-integers is finite if and only if the base $\Beta$ is Parry.
\end{proposition}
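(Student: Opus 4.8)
The plan is to reduce the finiteness of the set of distances to a combinatorial statement about the tails of the quasi-greedy expansions, and then to exploit the periodicity of $\Beta$ by a pigeonhole argument. Write $T_n=d_{n,n+1}d_{n,n+2}\cdots$ for the tail of $d_{S^n(\Beta)}^*(1)$ starting right after position $n$. The two previously established results give a correspondence between distances and tails in both directions: by Proposition~\ref{Pro : DistancesAndQuasiGreedy} we have $\Delta_n=\val_{\Beta}(0\bigcdot T_n)$, so each tail determines its distance, while by Corollary~\ref{Cor:limit} we have $0\bigcdot T_n=\lim_{x\to(\Delta_n)^-}d_{\Beta}(x)$, so conversely each distance determines its tail. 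Hence the set $\{\Delta_n:n\in\N\}$ is finite if and only if the set of tails $\{T_n:n\in\N\}$ is finite, and it suffices to characterize the latter.

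For the implication from Parry to finiteness, I would use that an eventually periodic word has only finitely many distinct suffixes. Since $\Beta$ has period $p$, we have $S^p(\Beta)=\Beta$, so $d_{S^n(\Beta)}^*(1)=d_{S^{i}(\Beta)}^*(1)$ whenever $n\equiv i\pmod p$; thus every tail $T_n$ is a suffix of one of the finitely many sequences $d_{S^i(\Beta)}^*(1)$ with $i\in\{0,\ldots,p-1\}$. If $\Beta$ is Parry, each of these $p$ sequences is eventually periodic and therefore has finitely many suffixes, so only finitely many tails occur, and by the equivalence above the set of distances is finite.

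For the converse, I would fix $i\in\{0,\ldots,p-1\}$ and restrict to the indices $n=i+kp$, $k\ge 0$. Writing $d_{S^i(\Beta)}^*(1)=d_{i,1}d_{i,2}\cdots$, the tail $T_{i+kp}$ is exactly the suffix of this single sequence starting at position $i+kp+1$. Assuming the set of distances, hence the set of tails, is finite, only finitely many distinct suffixes occur among $(T_{i+kp})_{k\ge 0}$, so by the pigeonhole principle there are $k_1<k_2$ with $T_{i+k_1p}=T_{i+k_2p}$. Comparing these suffixes position by position yields $d_{i,m}=d_{i,m+(k_2-k_1)p}$ for every $m\ge i+k_1p+1$, so $d_{S^i(\Beta)}^*(1)$ is eventually periodic. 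As $i$ is arbitrary, $\Beta$ is Parry.

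I expect the main obstacle to be conceptual rather than computational: namely, recognizing that Corollary~\ref{Cor:limit} is precisely the ingredient needed to guarantee that equal distance values force equal tails, so that finiteness transfers from distances to tails (and not merely the other way around, which is the easy direction coming from Proposition~\ref{Pro : DistancesAndQuasiGreedy}). Once this two-sided correspondence is in place, the remaining steps — finiteness of the suffix set of an eventually periodic word, and the pigeonhole argument made possible by reducing to the $p$ shifted bases via $S^p(\Beta)=\Beta$ — are routine.
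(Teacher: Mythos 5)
Your proposal is correct and takes essentially the same approach as the paper: both arguments convert distances into tails of the quasi-greedy expansions via Proposition~\ref{Pro : DistancesAndQuasiGreedy} (tail determines distance) and Corollary~\ref{Cor:limit} (distance determines tail), split into residue classes modulo $p$ using $S^p(\Beta)=\Beta$, and finish with the same pigeonhole-on-suffixes argument for the converse. The only difference is presentational: you establish the distance--tail bijection once at the outset, whereas the paper applies Corollary~\ref{Cor:limit} inside each residue class.
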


\begin{proof}
By Proposition~\ref{Pro : DistancesAndQuasiGreedy}, we have to show that the set
\[
	D=\{\val_{\Beta}(0\bigcdot d_{n,n+1}d_{n,n+2}\cdots) : n \in \N\}
\]
is finite if and only if $\Beta$ is Parry. For $i\in\{0,\ldots, p-1\}$ we let
\[
	D_i=\{\val_{\Beta}(0\bigcdot d_{i,n+1}d_{i,n+2}\cdots) : n \in \N,\ n\equiv i \bmod p\}.
\]
Since $D=\cup_{i=0}^{p-1}D_i$, the set $D$ is finite if and only if $D_i$ is finite for every $i$. We will show that for every $i$, the sequence $d_{S^ i(\Beta)}^*(1)$ is eventually periodic if and only if $D_i$ is finite.
 Let us fix  $i \in \{0,\ldots,p-1\}$.

First, suppose that $d_{S^ i(\Beta)}^*(1)$ is eventually periodic. Without loss of generality we may assume that there exist $\ell\ge 0$ and $m \ge 1$ such that $d_{S^i(\Beta)}^*(1)$ has preperiod $\ell p$ and period $mp$. Then for all $n>\ell p$ such that $n\equiv i \bmod p$, we have $d_{i,n+1}d_{i,n+2}\cdots
= d_{i,n+mp+1}d_{i,n+mp+2}\cdots$.
Hence, the set $D_i$ has at most $\ell + m$ elements.

Conversely, suppose that the set $D_i$ is finite.  Then there exist  $n,n' \in \N$ such that $n<n'$,   $n\equiv i \bmod p$, $n'\equiv i \bmod p$ and $\Delta_n=\val_{\Beta}(0\bigcdot d_{i,n+1}d_{i,n+2}\cdots)= \val_{\Beta}(0\bigcdot d_{i,n'+1}d_{i,n'+2}\cdots)=\Delta_{n'}$.
Let us show that if the valuation of these two strings in base $\Beta$ coincide, then the strings must coincide as well.
By Corollary~\ref{Cor:limit} and since $n\equiv i \bmod p$ and $n'\equiv i \bmod p$, we have
\[
	0\bigcdot d_{i,n+1}d_{i,n+2}\cdots
	= \lim_{x\to (\Delta_n)^-} \DB(x)
	= \lim_{x\to (\Delta_{n'})^-} \DB(x)
	= 0\bigcdot d_{i,n'+1}d_{i,n'+2}\cdots.
\]
Consequently, the sequence $d_{i,n}d_{i,n+1}\cdots $ is purely periodic and thus $d_{S^ i(\Beta)}^*(1) =d_{i,1}d_{i,1}\cdots$ is eventually periodic.
\end{proof}

For an alternate base $\Beta=(\beta_{p-1},\dots,\beta_0)$, since $\Beta$ and  $S^p(\Beta)$ coincide, the sequence  $w_{\Beta}$ is fixed by the composition $\psi_{\Beta}\circ \cdots \circ \psi_{S^{p-1}(\Beta)}$. In the case where $\Beta$ is Parry, the sequence of distances between consecutive elements in $\N_{\Beta}$ can be coded by an infinite word $v_{\Beta}$ over a finite alphabet.

\begin{definition}
\label{Def : v-phi}
Let $\Beta=(\beta_{p-1},\dots,\beta_0)$ be a Parry alternate base, and let $\ell\ge 0$ and $m\ge 1$ such that $d_{S^i(\Beta)}(1)$ has preperiod $\ell p$ and period $mp$ for every $i\in\{0,\ldots, p-1\}$. Let then $\A$ be the finite alphabet $\{0,\ldots,\ell p+mp-1\}$. We define an infinite word $v_{\Beta}=(v_k)_{k\in\N}$ over $\A$ by setting
\[
 	v_k =
 	\begin{cases}
  	 	w_k, 	& \text{if } 0\le w_k < \ell p;\\
   		\ell p + ((w_k-\ell p) \bmod mp), 	& \text{otherwise}
	\end{cases}
\]
(where the letters $w_k$ are those from Definition~\ref{Def : Word}). We also define a substitution $\varphi_{\Beta}\colon\A^*\to \A^*$ by setting
\[
	\varphi_{\Beta}(n)
	=\begin{cases}
		0^{d_{n+1,n+1}}(n+1),
			&\text{if } 0\le n< \ell p +mp-1; \\
		0^{d_{0,\ell p+mp}}(\ell p) ,
			& \text{if } n=\ell p +mp-1
 \end{cases}
\]
for each $n\in\A$.
\end{definition}

\begin{proposition}
Let $\Beta=(\beta_{p-1},\cdots,\beta_0)$ be a Parry  alternate base, let $v_{\Beta}=(v_k)_{k\in\N}$ as in Definition~\ref{Def : v-phi} and let $(x_k)_{k\in\N}$ be the increasing sequence of $\Beta$-integers. If $k,k'\in \N$ are such that $v_k=v_{k'}$ then $x_{k+1} - x_k = x_{k'+1}-x_{k'}$.
\end{proposition}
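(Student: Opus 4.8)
The plan is to reduce the statement to a claim about the quantities $\Delta_n$. By Proposition~\ref{Pro : Xk+1-Xk} together with Definition~\ref{Def : Word}, the gap between two consecutive $\Beta$-integers is $x_{k+1}-x_k=\Delta_{w_k}$, where $w_k$ is the letter of $w_\Beta$ at position $k$ and $\Delta_n=\beta_{n-1}\cdots\beta_0-M_{\Beta,n}$. Hence it suffices to show that $v_k=v_{k'}$ forces $\Delta_{w_k}=\Delta_{w_{k'}}$. Writing $n=w_k$ and $n'=w_{k'}$, I would argue according to the two branches in the definition of $v_\Beta$ (Definition~\ref{Def : v-phi}).

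First I would note that the two branches take values in disjoint ranges: if $w_k<\ell p$ then $v_k=w_k\in\{0,\dots,\ell p-1\}$, whereas if $w_k\ge \ell p$ then $v_k=\ell p+((w_k-\ell p)\bmod mp)\in\{\ell p,\dots,\ell p+mp-1\}$. Therefore $v_k=v_{k'}$ forces $n$ and $n'$ to fall in the same branch. If both are below $\ell p$, then $n=v_k=v_{k'}=n'$ and the conclusion is immediate. If both are at least $\ell p$, then $v_k=v_{k'}$ yields $(n-\ell p)\bmod mp=(n'-\ell p)\bmod mp$, that is, $n\equiv n'\pmod{mp}$.

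The heart of the proof is the remaining case $n,n'\ge \ell p$ with $n\equiv n'\pmod{mp}$; say $n'=n+tmp$ with $t\ge 0$. By Proposition~\ref{Pro : DistancesAndQuasiGreedy} we have $\Delta_n=\val_\Beta(0\bigcdot d_{n,n+1}d_{n,n+2}\cdots)$, so it is enough to show that the tails $(d_{n,n+j})_{j\ge 1}$ and $(d_{n',n'+j})_{j\ge 1}$ coincide as sequences; then the two valuations are literally equal. Since $\Beta$ has period $p$, we have $S^n(\Beta)=S^{n\bmod p}(\Beta)$, so $d_{n,\cdot}=d_{i,\cdot}$ with $i:=n\bmod p=n'\bmod p$. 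Now $d_{S^i(\Beta)}^*(1)$ has preperiod $\ell p$ and period $mp$, so $d_{i,j}=d_{i,j+mp}$ for every $j>\ell p$; as $n\ge \ell p$, every index $n+j$ with $j\ge 1$ satisfies $n+j>\ell p$, and iterating $t$ times gives $d_{i,n+j}=d_{i,n+j+tmp}=d_{i,n'+j}$. Thus the tails agree and $\Delta_n=\Delta_{n'}$, as desired.

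The only delicate point is this last bookkeeping step: one must be sure that the hypothesis $n\ge \ell p$ coming from the second branch pushes the entire tail $d_{i,n+1}d_{i,n+2}\cdots$ strictly beyond the preperiod, so that the period $mp$ applies to every one of its digits. This is precisely why $v_\Beta$ collapses only the indices past $\ell p$ modulo $mp$ while leaving the indices below $\ell p$ untouched, and why Definition~\ref{Def : v-phi} requires a single $\ell$ and $m$ that serve all residues $i\in\{0,\dots,p-1\}$ at once. Once the tails are shown to coincide, the equality of the valuations, and hence of the distances, is immediate, so no further estimate is needed.
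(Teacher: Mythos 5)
Your proof is correct and takes essentially the same route as the paper's: both reduce via Proposition~\ref{Pro : Xk+1-Xk} to showing that $\Delta_n=\Delta_{n+tmp}$ for all $n\ge \ell p$ and $t\ge 0$, and both derive this from Proposition~\ref{Pro : DistancesAndQuasiGreedy} combined with the fact that each $d_{S^i(\Beta)}^*(1)$ has preperiod $\ell p$ and period $mp$, so that the letters identified in passing from $w_{\Beta}$ to $v_{\Beta}$ code equal distances. Your version simply makes explicit the branch analysis and the tail bookkeeping (including the reduction $d_{n,\cdot}=d_{n\bmod p,\cdot}$ and the check that $n\ge\ell p$ pushes the whole tail past the preperiod) that the paper's proof leaves implicit.
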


\begin{proof}
Proposition \ref{Pro : DistancesAndQuasiGreedy} and the periodicity of  $d_{S^i(\Beta)}^*(1)$ imply that $\Delta_n=	\Delta_{n+mp}$ for all $n\ge \ell p$. Then, by Proposition~\ref{Pro : Xk+1-Xk}, for all $n\ge \ell p$ and $N\ge 0$, distances of consecutive $\Beta$-integers coded in $w_{\Beta}$ by the letters $n$ and $n+Nmp$ coincide. Since the word $v_{\Beta}$ is obtained from $w_{\Beta}$ by identifying the letters of the form $n+Nmp$ for $n\ge \ell p$ and $N \ge 0$ with the letter $n$, we get that the infinite word $v_{\Beta}$ satisfies the property of the statement, that is, if $v_k=v_{k'}$ then $x_{k+1} - x_k = x_{k'+1}-x_{k'}$.
\end{proof}

We show that the composition $\varphi_{\Beta}\circ \varphi_{S(\Beta)}\circ \cdots \circ \varphi_{S^{p-1}(\Beta)}$ gives us a primitive substitution that fixes the infinite word $v_{\Beta}$.

\begin{theorem}
\label{Thm : FinitSubstit}
Let $\Beta=(\beta_{p-1},\cdots,\beta_0)$ be a Parry alternate base. The substitution $\varphi_{\Beta}\circ \varphi_{S(\Beta)}\circ \cdots \circ \varphi_{S^{p-1}(\Beta)}$ is primitive and $\varphi_{\Beta}\circ \varphi_{S(\Beta)}\circ \cdots \circ \varphi_{S^{p-1}(\Beta)} (v_{\Beta}) = v_{\Beta}$.
\end{theorem}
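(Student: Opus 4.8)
The plan is to realize the finite‑alphabet substitution $\varphi_{\Beta}$ as the push‑forward of the infinite‑alphabet substitution $\psi_{\Beta}$ under the folding map producing $v_{\Beta}$ from $w_{\Beta}$, and then transport the already established identity $\psi_{S^i(\Beta)}(w_{S^{i+1}(\Beta)})=w_{S^i(\Beta)}$ to the finite alphabet. Let $\pi\colon\N\to\A$ be the map with $\pi(n)=n$ for $0\le n<\ell p$ and $\pi(n)=\ell p+((n-\ell p)\bmod mp)$ otherwise, so that $v_{\Beta}$ is obtained from $w_{\Beta}$ by applying $\pi$ letterwise (Definition~\ref{Def : v-phi}); since $\ell$ and $m$ serve simultaneously for all shifted bases, the same $\pi$ and the same alphabet $\A$ also give $v_{S^i(\Beta)}=\pi(w_{S^i(\Beta)})$. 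The key lemma to establish is the commutation $\pi\circ\psi_{S^i(\Beta)}=\varphi_{S^i(\Beta)}\circ\pi$ for $0\le i\le p-1$, viewed as an equality of maps $\N\to\A^*$. It suffices to treat $i=0$ and to evaluate both sides on a single letter $n\in\N$. Using that $m_{\Beta,n,0}=d_{n,n}$ (compare Proposition~\ref{Pro : DistancesAndQuasiGreedy} with Definition~\ref{Def : Mn}), we have $\psi_{\Beta}(n)=0^{d_{n+1,n+1}}(n+1)$, hence $\pi(\psi_{\Beta}(n))=0^{d_{n+1,n+1}}\pi(n+1)$; comparing with $\varphi_{\Beta}(\pi(n))$ reduces to matching the trailing letters and the exponents of $0$. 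The trailing letters match directly from the definitions of $\pi$ and $\varphi_{\Beta}$, the wrap‑around $\ell p+mp-1\mapsto\ell p$ of $\varphi_{\Beta}$ being exactly the wrap‑around of $\pi$ at $n+1\equiv 0\ (\mathrm{mod}\ mp)$; for the exponents the only nontrivial cases are those where $n$ sits inside or just below the periodic window, where one uses that the base depends only on its index modulo $p$ together with the positional periodicity $d_{i,j}=d_{i,j+mp}$ (for $j>\ell p$) to identify $d_{n+1,n+1}$ with $d_{\pi(n)+1,\pi(n)+1}$, respectively with $d_{0,\ell p+mp}$ in the wrap‑around case. This is a finite, case‑based verification.

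Granting the commutation, the fixed‑point statement is immediate. Write $\Psi=\psi_{\Beta}\circ\cdots\circ\psi_{S^{p-1}(\Beta)}$ and $\Phi=\varphi_{\Beta}\circ\cdots\circ\varphi_{S^{p-1}(\Beta)}$. Composing the commutation of the individual factors yields $\pi\circ\Psi=\Phi\circ\pi$, and since $\Beta=S^p(\Beta)$ the remark preceding Definition~\ref{Def : v-phi} gives $\Psi(w_{\Beta})=w_{\Beta}$. Therefore
\[
	\Phi(v_{\Beta})=\Phi(\pi(w_{\Beta}))=\pi(\Psi(w_{\Beta}))=\pi(w_{\Beta})=v_{\Beta}.
\]

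The harder half is primitivity, which I would prove by analysing the letter graph $G$ of $\Phi$ (put an edge $a\to b$ when $b$ occurs in $\Phi(a)$) and showing that it is strongly connected and carries a loop. Let $s\colon\A\to\A$ be the successor map $s(c)=c+1$ for $c<\ell p+mp-1$ and $s(\ell p+mp-1)=\ell p$, so that each $\varphi_{S^i(\Beta)}(c)$ is a block of $0$'s followed by $s(c)$. Because the leading digit of every quasi‑greedy expansion is positive, each $\varphi_{S^i(\Beta)}(0)$ begins with $0$, so $\Phi(0)$ begins with $0$: this is a self‑loop at $0$ and gives aperiodicity. That $0$ reaches every letter follows since the trailing letters produced by the factors run through $1,2,3,\dots$ consecutively, so over the finitely many letters of $\A$ every letter appears in some $\Phi^k(0)$.

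The crux is the converse — that every letter reaches $0$ — and this is the step I expect to be the main obstacle. Here I would track the trailing path of $\Phi(b)$: a short computation (exactly the one used for the commutation above) shows that the $p$ exponents of $0$ produced along that path are the digits of $d^*_{S^b(\Beta)}(1)$ at the positions $b+1,b+2,\dots,b+p$, the top letter $\ell p+mp-1$ reading position $\ell p+mp$, which by periodicity simply continues the reading. Since $s^p$ preserves residues modulo $p$, iterating $\Phi$ keeps reading the \emph{same} expansion $d^*_{S^b(\Beta)}(1)$ at the consecutive positions $b+1,b+2,\dots$; as a quasi‑greedy expansion never ends in $0^\omega$, these digits cannot all vanish, so some $\Phi^k(b)$ contains a $0$. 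This gives an edge to $0$ from every letter, and strong connectivity of $G$ together with the self‑loop at $0$ yields a strictly positive power of the incidence matrix $M_{\Phi}$, i.e.\ primitivity. The delicate point is precisely the identification of these exponents with a single non‑eventually‑zero expansion across the wrap‑around, which is what forces a positive digit to occur.
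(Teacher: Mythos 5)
Your proof is correct, and while its fixed-point half coincides with the paper's, its primitivity half takes a genuinely different route. For the fixed point, the paper proceeds exactly as you do: it upgrades Proposition~\ref{Pro : Substitution} to the finite alphabet via the digit identities $m_{\Beta,n+1,0}=d_{n+1,n+1}$, $d_{n+1+Nmp,n+1+Nmp}=d_{n+1,n+1}$ for $n\ge \ell p$, and $d_{\ell p+mp,\ell p+mp}=d_{0,\ell p+mp}$; your commutation lemma $\pi\circ\psi_{S^i(\Beta)}=\varphi_{S^i(\Beta)}\circ\pi$ is just a cleaner packaging of these same identifications, and your wrap-around bookkeeping (in effect $s(\pi(x))=\pi(x+1)$ together with $d_{i,j}=d_{i,j+mp}$ for $j>\ell p$ and reduction of the first index modulo $p$) checks out. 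For primitivity, the paper works with the automaton of~\cite{CharlierCisternino2021} on $p(\ell p+mp)$ vertices: it identifies the blocks $M_i$ of its adjacency matrix with the transposed incidence matrices of the $\varphi_{S^i(\Beta)}$, exhibits an explicit closed walk through \emph{all} vertices (built from the length-$(\ell p+mp)$ prefixes of the quasi-greedy expansions, a nonzero digit $d_{i+1,j_i}$ in each periodic part, and $0$-labelled arrows into the vertices $(i-1,0)$), and then uses that path lengths between same-layer vertices are multiples of $p$, plus the loop $0^p$, to get primitivity of each diagonal block $D_i$ of $M^p$. You instead analyse the letter graph of the composed substitution $\Phi$ directly on the $(\ell p+mp)$-letter alphabet: a self-loop at $0$ since every leading digit $d_{i,1}$ is positive; $0$ reaches every letter via the successor orbit (with the letter sets of $\Phi^k(0)$ nondecreasing because $0$ persists in images); and every letter $b$ reaches $0$ because the exponents along the trailing path of $\Phi^k(b)$ are precisely the digits $d_{b,b+1},\ldots,d_{b,b+kp}$ of the single expansion $d^*_{S^b(\Beta)}(1)$, which never ends in $0^\omega$ — this identification across the wrap-around is the delicate step, and your verification of it (same first index modulo $p$, positional periodicity for the second index) is sound, with the small implicit point that once a $0$ appears at an intermediate stage of the composition it persists, since each factor maps $0$ to a word containing $0$. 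Your route is shorter and more self-contained, isolating the one real reason for primitivity (quasi-greedy expansions have no tail of zeros); what the paper's heavier detour buys is infrastructure that is reused immediately afterwards — the matrices $(M_i)^\intercal$ and $D_{p-1}$ introduced in this proof are exactly those used to prove Lemma~\ref{Lem:PF-eigenvalue} — together with an explicit bridge to the automaton-theoretic description of the numeration system.
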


\begin{proof}
By Proposition~\ref{Pro : Substitution}, the substitution $\psi_{\Beta}$ maps $w_{S(\Beta)}$ to $w_{\Beta}$. By Proposition~\ref{Pro : DistancesAndQuasiGreedy}, we have $m_{\Beta,n+1,0}=d_{n+1,n+1}$ for all $n$. So we can rewrite the images of the letters $n$ under $\psi_{\Beta}$ as $\psi_{\Beta}(n)=0^{d_{n+1,n+1}}(n+1)$. Let us show that the substitution $\varphi_{\Beta}$ maps $v_{S(\Beta)}$ to $v_{\Beta}$. In our setting, we have $d_{\ell p + mp,\ell p + mp} = d_{0,\ell p + mp}$ and $d_{n+1+Nmp,n+1+Nmp}= d_{n+1,n+1}$ for all $n\ge \ell p$ and $N \ge 0$. We get that $\psi_{\Beta}(n + Nmp)=0^{d_{n+1,n+1}}(n+1+Nmp)$ for all $n\ge \ell p$ and $N \ge 0$. Since the word $v_{\Beta}$ is obtained from $w_{\Beta}$ by identifying the letters of the form $n+Nmp$ for $n\ge \ell p$ and $N \ge 0$ with the letter $n$ and the same holds true for $v_{S(\Beta)}$ and $w_{S(\Beta)}$, the substitution $\varphi_{\Beta}$ maps $v_{S(\Beta)}$ to $v_{\Beta}$. If we now consider the substitutions $\varphi_{S^n(\Beta)}$, which are defined as in Definition~\ref{Def : v-phi} but for the shifted bases $S^n(\Beta)$ instead of $\Beta$ itself, we can deduce that $\varphi_{S^n(\Beta)}(v_{S^{n+1}(\Beta)})=v_{S^n(\Beta)}$ for all $n$. Now, since $S^p(\Beta) = \Beta$, the word $v_{\Beta}$ is fixed by $\varphi = \varphi_{\Beta}\circ \varphi_{S(\Beta)}\circ \cdots \circ \varphi_{S^{p-1}(\Beta)}$.

It remains to show that the substitution $\varphi_{\Beta}\circ \varphi_{S(\Beta)}\circ \cdots \circ \varphi_{S^{p-1}(\Beta)}$ is primitive.
In~\cite{CharlierCisternino2021}, a deterministic finite automaton associated with a Parry alternate base was defined. We will exploit this automaton by using the special form of its adjacency matrix, which can be interpreted in terms of the incidence matrices of the $p$ substitutions $\varphi_{S^i(\Beta)}$ for $i\in\{0,\ldots,p-1\}$.

We depict only the graph underlying this automaton, as specifying the initial and final states is irrelevant here. The set of vertices is $Q=\{0,\ldots,p-1\}\times \{0,\ldots,\ell p+mp-1\}$. In what follows, operations on the first component of a vertex is always considers modulo $p$. For each vertex $(i,n)$ with $n<\ell p+mp-1$, there is an arrow labeled by $d_{i+n+1,n+1}$ from $(i,n)$ to $(i-1,n+1)$. Moreover, there is an arrow from each vertex $(i,\ell p+mp-1)$ to $(i-1,\ell p)$ which is labeled by $d_{i,\ell p+mp}$. Finally, for each vertex $(i,n)$ and $s\in\{0,\ldots,d_{i+n+1,n+1}-1\}$, there is an arrow labeled by $s$ from $(i,n)$ to $(i-1,0)$. This construction is illustrated in Section~\ref{Sec : Example} on an example and the corresponding graph is depicted in Figure~\ref{Fig : Automaton}.

Let us recall that the adjacency matrix of a graph with $d$ vertices is a square matrix $M \in \N^{d\times d}$ such that for each pair $a,b$ of vertices of the graph, the entry $M_{a,b}$ is equal to the number of arrows starting in $b$ and ending in $a$. We will later use the well known fact that $(M^j)_{a,b}$ equals the number of oriented  paths of length $j$ starting in the vertex $b$ and ending in the vertex $a$. Similarly, the incidence matrix of a substitution $\sigma$ over a finite alphabet $\A$ of size $d$ is a square matrix $M \in \N^{d\times d}$ such that for each pair $a,b$ of letters in $\A$, the entry $M_{a,b}$ is equal to the number of occurrences of the letter $a$ in the image $\sigma(b)$.

By construction, the adjacency matrix of our graph where the vertices are ordered as
\[
	(p-1,0),\ldots,(p-1,\ell p+mp-1),
	\ldots,
	(0,0),\ldots,(0,\ell p+mp-1),\]
is of the form
\[
M = \left( \begin{array}{cccccc}
		\Theta & M_{p-1} & \Theta & \cdots & \Theta \\
		\Theta & \Theta & M_{p-2} & \cdots & \Theta \\
 		\vdots &&&& \vdots \\
 		\Theta & \Theta & \Theta & \cdots &  M_1\\
		M_0 & \Theta & \Theta & \cdots & \Theta
	\end{array}\right),
\]
where $\Theta$ is the zero matrix of size $\ell p+mp$ and each block $M_i$ is a square matrix of size $\ell p+mp$.

We claim that each $M_i$ is the transposed incidence matrix of the substitution $\varphi_{S^i(\Beta)}$. Fix some $i\in\{0,\ldots,p-1\}$ and consider $r,s\in\{0,\ldots,\ell p+mp-1\}$. Then $(M_i)_{r,s}$ is the number of arrows from the vertex $(i,r)$ to the vertex $(i-1,s)$. By definition of the graph, this number is equal to $1$ either if $r<\ell p+mp-1$ and $s=r+1$, or if $r=\ell p+mp-1$ and $s=\ell p$, it is equal to $d_{i+r+1,r+1}$ if $s=0$, and it is equal to $0$ otherwise. Now, consider the incidence matrix $N_i$ of $\varphi_{S^i(\Beta)}$. We want to show that $(N_i)_{s,r}=(M_i)_{r,s}$. The entry $(N_i)_{s,r}$ is the number of occurrences of the letter $s$ in the image $\varphi_{S^i(\Beta)}(r)$. By definition of the substitution $\varphi_{S^i(\Beta)}$, there are  $d_{i+r+1,r+1}$ occurrences of the letter $0$ and one occurrence of the letter $r+1$ in $\varphi_{S^i(\Beta)}(r)$ if $r<\ell p+mp-1$, while there are $d_{i,\ell p+mp}$ occurrences of the letter $0$ and one occurrence of the letter $\ell p$ in $\varphi_{S^i(\Beta)}(\ell p+mp-1)$. Since $d_{i,\ell p+mp}=d_{i+\ell p+mp,\ell p+mp}$, we obtain the desired claim.

Consequently, the $p$-th power of $M$ has the form
\begin{equation}
\label{powerP}
M^p = \left( \begin{array}{cccccc}
		D_{p-1} & \Theta & \cdots & \Theta & \Theta \\
		\Theta & D_{p-2} & \cdots & \Theta & \Theta \\
 		\vdots &&&& \vdots \\
 		\Theta & \Theta & \cdots & D_{p-2} & \Theta \\
		\Theta & \Theta & \cdots & \Theta & D_0
	\end{array}\right),
\end{equation}
where $D_{p-1} = M_{p-1}\cdots M_1 M_0$ is the transposed incidence matrix of the substitution $\varphi_{\Beta}\circ \varphi_{S(\Beta)}\circ \cdots \circ \varphi_{S^{p-1}(\Beta)}$.

We now show that our graph is strongly connected, i.e., that there exists a cycle visiting all vertices of the underlying graph. We consider three kinds of subpaths. First, for every $i\in\{0,\ldots,p-1\}$, the prefix $d_{i+1,1}\cdots d_{i+1,\ell p+mp}$ of length $\ell p+mp$ of $d_{S^i(\Beta)}^*(1)$ labels a path going through the vertices
\begin{align*}
	&(i,0),\ldots,(i-p+1,p-1), \\
	&(i,p),\ldots,(i-p+1,2p-1), \\
	&\ldots, \\
	&(i,\ell p),\ldots,(i-p+1,(\ell+1)p-1), \\
	&\ldots, \\
	&(i,(\ell+m-1)p),\ldots,(i-p+1,(\ell+m)p-1), \\
	&(i,\ell p).
\end{align*}
Second, for every $i\in\{0,\ldots,p-1\}$, the periodic part $d_{i+1,\ell p+1}\cdots d_{i+1,\ell p+mp}$ of $d_{S^i(\Beta)}^*(1)$  must contain a non-zero digit $d_{i+1,j_i}$, where $j_i\in\{\ell p+1,\ldots,\ell p+mp\}$. Therefore, there is an arrow labeled by $0$ from the vertex $(i-j_i+1,j_i-1)$ to $(i-j_i,0)$. Note that the vertex $(i-j_i+1,j_i-1)$ belongs to the cycle
\begin{align*}
	&(i,\ell p),\ldots,(i-p+1,(\ell+1)p-1), \\
	&\ldots, \\
	&(i,(\ell+m-1)p),\ldots,(i-p+1,(\ell+m)p-1),\\
	&(i,\ell p)
\end{align*}
considered before. Third, we show that there is a cycle visiting all the vertices of the form $(i,0)$ for $i\in\{0,\ldots,p-1\}$. Every quasi-greedy-expansions $d_{S^i(\Beta)}^*(1)$ starts with a non-zero digit $d_{i,1}$. Therefore, for every $i\in\{0,\ldots,p-1\}$, there is an arrow labeled by $0$ from $(i,0)$ to $(i-1,0)$. Thus, there exist exponents $t_i\ge 1$ such that the word $0^{t_i}$ labels a path from the vertex $(i-j_i+1,j_i-1)$ to the vertex $(i-1,0)$. Altogether, we see that the word
\begin{align*}
	& (d_{0,1}\cdots d_{0,\ell p+mp})
	 (d_{0,\ell p+1}\cdots d_{0,j_0-1})
	 0^{t_{p-1}} \\
	& (d_{p-1,1}\cdots d_{p-1,\ell p+mp})
	(d_{p-1,\ell p+1}\cdots d_{p-1,j_{p-1}-1})
	0^{t_{p-2}} \\
	&\cdots \\
	& (d_{1,1}\cdots d_{1,\ell p+mp})
	(d_{1,\ell p+1}\cdots d_{1,j_1}-1)
	0^{t_0}
\end{align*}
labels a cycle from $(p-1,0)$ to $(p-1,0)$ that visits all vertices of the graph.

By definition of the graph, for any given $i\in\{0,\ldots,p-1\}$, the length of any path from a vertex $(i,n)$ to a vertex $(i,n')$ must be a multiple of $p$. We deduce that the matrices $D_i$ are irreducible. Moreover, the first entry on the diagonal of $D_i$ is positive since the word $0^p$ labels a path from $(i,0)$ to $(i,0)$. These two properties force primitivity of every matrix $D_i$, and hence of the substitution $\varphi_{\Beta}\circ \varphi_{S(\Beta)}\circ \cdots \circ \varphi_{S^{p-1}(\Beta)}$.
\end{proof}

We conclude this section by proving an important consequence of Theorem~\ref{Thm : FinitSubstit} about the uniqueness of the alternate base. We first prove a lemma which is of interest by itself.

\begin{lemma}
\label{Lem:PF-eigenvalue}
Let $\Beta =(\beta_{p-1},\ldots,\beta_0)$ be a Parry alternate base. The Perron–Frobenius eigenvalue of the incidence matrix of the substitution $\varphi_{\Beta}\circ \varphi_{S(\Beta)}\circ \cdots \circ \varphi_{S^{p-1}(\Beta)}$ is given by the product $\beta_{p-1}\cdots \beta_0$ of the bases.
\end{lemma}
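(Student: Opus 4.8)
The plan is to exhibit an explicit positive left eigenvector of the incidence matrix whose eigenvalue is $\beta_{p-1}\cdots\beta_0$, and then to invoke the Perron--Frobenius theorem. The natural candidate is the vector of \emph{distances} coded by the letters of $v_\Beta$. Concretely, for each $i\in\{0,\ldots,p-1\}$ I would introduce the row vector $\delta^{(i)}=(\Delta^{S^i(\Beta)}_0,\ldots,\Delta^{S^i(\Beta)}_{\ell p+mp-1})$, whose $n$-th entry is the distance between consecutive $S^i(\Beta)$-integers coded by the letter $n$ in $v_{S^i(\Beta)}$, indices of the bases being taken modulo $p$ so that $\delta^{(p)}=\delta^{(0)}$. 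Each $\delta^{(i)}$ is a positive vector, since the distance coded by the letter $0$ equals $\Delta_0=1$ and all remaining distances are strictly positive.

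The key step is the one-base relation
\[
	\delta^{(i)}\, M_{\varphi_{S^i(\Beta)}}=\beta_i\,\delta^{(i+1)},
\]
where $M_{\varphi_{S^i(\Beta)}}$ is the incidence matrix of $\varphi_{S^i(\Beta)}$. To see this for $i=0$, recall that the $n$-th column of $M_{\varphi_\Beta}$ is the Parikh vector of $\varphi_\Beta(n)$, so that $(\delta^{(0)}M_{\varphi_\Beta})_n$ equals the total $\Beta$-distance covered by the block $\varphi_\Beta(n)$. For $n<\ell p+mp-1$ this block is $0^{d_{n+1,n+1}}(n+1)$, and using $d_{n+1,n+1}=m_{\Beta,n+1,0}$ from Proposition~\ref{Pro : DistancesAndQuasiGreedy} together with $\Delta_0=1$, its total distance is $m_{\Beta,n+1,0}+\Delta_{n+1}$, which equals $\beta_0\,\Delta^{S(\Beta)}_n=\beta_0\,\delta^{(1)}_n$ by Corollary~\ref{Lem : relationDelta}. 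For the wrap-around letter $n=\ell p+mp-1$ the block is $0^{d_{0,\ell p+mp}}(\ell p)$, and one checks the same identity by combining Corollary~\ref{Lem : relationDelta} at index $\ell p+mp-1$ with the periodicities $\Delta_{\ell p}=\Delta_{\ell p+mp}$ and $d_{0,\ell p+mp}=d_{\ell p+mp,\ell p+mp}$ already used in the proof of Theorem~\ref{Thm : FinitSubstit}. Applying the same argument to the shifted base $S^i(\Beta)$, whose zeroth base is $\beta_i$, yields the displayed relation for every $i$.

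Composing the relation over a full period and using $M_{\psi\circ\tilde\psi}=M_\psi M_{\tilde\psi}$, I then obtain, with $M=M_{\varphi_\Beta}M_{\varphi_{S(\Beta)}}\cdots M_{\varphi_{S^{p-1}(\Beta)}}$ the incidence matrix of $\varphi_\Beta\circ\cdots\circ\varphi_{S^{p-1}(\Beta)}$,
\[
	\delta^{(0)}M=\beta_0\beta_1\cdots\beta_{p-1}\,\delta^{(0)},
\]
since $S^p(\Beta)=\Beta$ makes the chain telescope back to $\delta^{(0)}$. Thus $\delta^{(0)}$ is a positive left eigenvector of $M$ with eigenvalue $\beta_{p-1}\cdots\beta_0$. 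Because the substitution is primitive by Theorem~\ref{Thm : FinitSubstit}, $M$ is a primitive non-negative matrix, so by the Perron--Frobenius theorem its only eigenvalue admitting a positive eigenvector is its Perron--Frobenius eigenvalue. Hence $\beta_{p-1}\cdots\beta_0$ is exactly the Perron--Frobenius eigenvalue of $M$.

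I expect the main obstacle to lie in the bookkeeping of the key step, specifically in verifying the eigenvector relation for the single wrap-around letter $\ell p+mp-1$, where the image $0^{d_{0,\ell p+mp}}(\ell p)$ does not have the generic shape $0^{(\cdot)}(n+1)$; this is precisely where the periodicity identities must be invoked to reconcile the block distance with Corollary~\ref{Lem : relationDelta}. Everything else is a clean telescoping computation together with the standard Perron--Frobenius uniqueness of the positive eigenvector.
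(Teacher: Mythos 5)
Your proposal is correct and is essentially the paper's own proof: the paper likewise takes the vector $\Delta^{(i)}$ of distances as a positive left eigenvector, establishes the one-base relation $\beta_i\Delta^{(i+1)}=\Delta^{(i)}\,M_{\varphi_{S^i(\Beta)}}$ from Corollary~\ref{Lem : relationDelta} together with Proposition~\ref{Pro : DistancesAndQuasiGreedy} (handling the wrap-around letter via $\Delta_{i,\ell p+mp}=\Delta_{i,\ell p}$ and $\Delta_{i,0}=1$, exactly as you do), telescopes over one period using $S^p(\Beta)=\Beta$, and concludes by primitivity (Theorem~\ref{Thm : FinitSubstit}) and the Perron--Frobenius theorem. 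The only cosmetic difference is that the paper phrases the computation through the transposed incidence matrices $(M_i)^\intercal$ read off the automaton, whereas you verify the eigenvector relation columnwise via the total distance covered by each image block.
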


\begin{proof}
Let us keep the notation from the proof of Theorem~\ref{Thm : FinitSubstit}. The incidence matrix of the substitution $\varphi_{S^i(\Beta)}$ is given by
\[
	(M_i)^\intercal = \begin{pmatrix}
		d_{i+1,i+1}	& d_{i+2,i+2} & \cdots & d_{i+\ell p+mp-1,i+\ell p+mp-1}  & d_{i,i+i\ell p+mp}\\
		1 & 0	& \cdots & 0 &0 \\
		0 & 1	& \cdots & 0 &0 \\
		\vdots &&&&\\
		0 & 0	& \cdots & 1 &0
		\end{pmatrix} +E_i	
\]
where $E_i$ is the square matrix of size $\ell p+mp$ defined by $E_{\ell p-1,\ell p+mp-1}=1$ and $E_{n,n'}=0$ otherwise.

Now, for each $i\in\{0,\ldots,p-1\}$, let $\Delta_{i,n}$ denote the distance between $S^i(B)$-integers differing at index $n$. We thus have $\Delta_{i,n}=\beta_{i+n-1}\cdots\beta_i-M_{S^i(B),n}$. From Corollary~\ref{Lem : relationDelta} combined with Proposition~\ref{Pro : DistancesAndQuasiGreedy}, we know that $\beta_i \Delta_{i+1,n} =  \Delta_{i,n+1}+d_{i+n+1,i+n+1}$ for all $i$ and $n$. We focus on the values $n\in\{0,\ldots,\ell p+mp-1\}$ in order to write, for every $i\in\{0,\ldots,p-1\}$, the corresponding $\ell p + mp$ equalities in a matrix form. In order to do so, we define $\Delta^{(i)} =(\Delta_{i,0}, \ldots, \Delta_{i,\ell p+mp -1})$. Since $\Delta_{i,0}=1$ and $\Delta_{i,\ell p+mp}= \Delta_{i,\ell p }$ for all $i\in\{0,\ldots,p-1\}$, we get the matrix equalities
\[
	\beta_i\Delta^{(i+1)}  =\Delta^{(i)} (M_i)^\intercal,\quad i\in\{0,1,\ldots,p-1\}
\]

From this, we derive
\begin{align*}
	\beta_0\cdots \beta_{p-2}\beta_{p-1}	\Delta^{(0)}
	&=\beta_0\cdots \beta_{p-2}\beta_{p-1}	\Delta^{(p)} \\
	&=\Delta^{(0)}(M_0)^\intercal\cdots (M_{p-2})^\intercal(M_{p-1})^\intercal \\
	&=\Delta^{(0)}(M_{p-1}M_{p-2}\cdots M_0)^\intercal \\
	&=\Delta^{(0)} D_{p-1}.
\end{align*}
In other words, the primitive matrix $D_{p-1}$ has a positive left eigenvector $\Delta^{(0)}$ corresponding to the eigenvalue $\beta_{p-1}\beta_{p-2}\cdots \beta_0$. The Perron-Frobenius theorem implies that this eigenvalue is precisely the Perron-Frobenius eigenvalue.
\end{proof}

\begin{theorem}
Let $\Beta =(\beta_{p-1},\cdots,\beta_0)$ be a Parry alternate base. Then no other alternate base of length $p$ has the same list of quasi-greedy expansions of $1$.
\end{theorem}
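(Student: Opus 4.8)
The plan is to prove that the list of quasi-greedy expansions of $1$ rigidly determines every base $\beta_i$. Suppose $\Beta'=(\beta_{p-1}',\ldots,\beta_0')$ is another alternate base of length $p$ with $d_{S^i(\Beta')}^*(1)=d_{S^i(\Beta)}^*(1)$ for all $i\in\{0,\ldots,p-1\}$. First I would note that $\Beta'$ is automatically Parry, its quasi-greedy expansions being the eventually periodic sequences of $\Beta$, so that the same parameters $\ell,m$ and alphabet $\A$ serve for both bases. The decisive observation is that, by Definition~\ref{Def : v-phi}, each substitution $\varphi_{S^i(\Beta)}$ is built solely from the digits of the quasi-greedy expansions; therefore $\varphi_{S^i(\Beta')}=\varphi_{S^i(\Beta)}$ for every $i$, and consequently their incidence matrices and all the cyclic products $D_i$ from the proof of Theorem~\ref{Thm : FinitSubstit} agree for $\Beta$ and $\Beta'$.

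Next I would pin down the distance vectors. Applying Lemma~\ref{Lem:PF-eigenvalue} to the shifted base $S^i(\Beta)$ exhibits the positive row vector $\Delta^{(i)}=(\Delta_{i,0},\ldots,\Delta_{i,\ell p+mp-1})$ as a left eigenvector of the transposed incidence matrix of $\varphi_{S^i(\Beta)}\circ\cdots\circ\varphi_{S^{i+p-1}(\Beta)}$, which is primitive by Theorem~\ref{Thm : FinitSubstit}. By the Perron--Frobenius theorem the positive left eigenvector of a primitive matrix is unique up to a scalar, and the intrinsic normalization $\Delta_{i,0}=1$ fixes it completely. Since this matrix is the same for $\Beta$ and $\Beta'$, and the analogous normalization also forces first coordinate $1$, the distance vector attached to $\Beta'$ equals $\Delta^{(i)}$ for every $i$; combined with the periodicity $\Delta_{i,n}=\Delta_{i,n+mp}$ valid for $n\ge\ell p$, this shows that every distance $\Delta_{i,n}$ is determined by the shared quasi-greedy data.

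Finally I would read off the bases. Writing Corollary~\ref{Lem : relationDelta} for the shifted base $S^i(\Beta)$ gives the scalar identity $\beta_i\,\Delta_{i+1,n}=\Delta_{i,n+1}+m_{S^i(\Beta),n+1,0}$, in which Proposition~\ref{Pro : DistancesAndQuasiGreedy} identifies $m_{S^i(\Beta),n+1,0}$ as a digit of the quasi-greedy expansion, hence a quantity common to $\Beta$ and $\Beta'$. Taking $n=0$ and using $\Delta_{i+1,0}=1$ yields $\beta_i=\Delta_{i,1}+m_{S^i(\Beta),1,0}$, where both summands have already been shown to depend only on the common quasi-greedy expansions. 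The very same formula computes $\beta_i'$ from the identical data, whence $\beta_i'=\beta_i$ for every $i\in\{0,\ldots,p-1\}$, that is, $\Beta'=\Beta$.

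The main obstacle is conceptual rather than computational: it is tempting to stop at the product $\beta_{p-1}\cdots\beta_0$, which by Lemma~\ref{Lem:PF-eigenvalue} is the common Perron--Frobenius eigenvalue and is immediate once the matrices coincide, but this only separates the bases up to their product. The real content is that the whole eigenvector $\Delta^{(i)}$, and not merely the eigenvalue, is forced by the matrix; this requires the eigenvector to be strictly positive and normalized intrinsically by $\Delta_{i,0}=1$, so that Perron--Frobenius uniqueness applies, after which a single coordinate of the relation between $\Delta^{(i)}$ and $\Delta^{(i+1)}$ isolates the individual base $\beta_i$.
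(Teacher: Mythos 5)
Your proposal is correct, and it takes a genuinely different route from the paper at the decisive final step. Both arguments begin identically: the substitutions $\varphi_{S^i(\Beta)}$ are built solely from the digits of the quasi-greedy expansions, so a second base $\Gamma$ with the same list yields the same substitutions, hence the same primitive incidence matrices. The paper then stops at the eigen\emph{value}: by Lemma~\ref{Lem:PF-eigenvalue} the two products $\beta_{p-1}\cdots\beta_0$ and $\gamma_{p-1}\cdots\gamma_0$ coincide, and the conclusion $\Gamma=\Beta$ is outsourced to Proposition~20 of~\cite{Charlier&Cisternino&Masakova&Pelantova:2022}, which states that an alternate base is determined by its quasi-greedy expansions together with the product of its bases. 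You instead upgrade from eigenvalue to eigen\emph{vector}: since each $\Delta'^{(i)}$ is a positive left eigenvector of the common primitive matrix $D_i$ (a positive left eigenvector of a primitive matrix necessarily belongs to the Perron--Frobenius eigenvalue, as pairing with the positive right eigenvector shows), uniqueness up to scaling together with the intrinsic normalization $\Delta_{i,0}=1$ forces $\Delta'^{(i)}=\Delta^{(i)}$, and then the $n=0$ coordinate of $\beta_i\Delta^{(i+1)}=\Delta^{(i)}(M_i)^\intercal$ reconstructs each base individually via $\beta_i=\Delta_{i,1}+d_{i+1,1}$ (equivalently $\beta_i=\Delta_{i,1}+M_{S^i(\Beta),1}$, by Proposition~\ref{Pro : DistancesAndQuasiGreedy}). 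Your exchange is favorable in one respect and costly in another: you obtain a self-contained proof with an explicit reconstruction formula for each $\beta_i$, essentially reproving the needed instance of the cited Proposition~20 internally, whereas the paper's version is shorter given that citation. One marginal point worth making explicit: in the degenerate case $\ell p+mp=1$ (only possible for $p=1$, $\ell=0$, $m=1$) the vector $\Delta^{(i)}$ reduces to the single coordinate $\Delta_{i,0}=1$ and $\Delta_{i,1}$ is not itself a coordinate; your appeal to the wrap-around periodicity $\Delta_{i,n}=\Delta_{i,n+mp}$ for $n\ge\ell p$ does cover this, since then $\Delta_{i,1}=\Delta_{i,0}=1$, but it deserves a sentence.
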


\begin{proof}
Assume that the list of quasi-greedy expansions $d_{S^i(\Beta)}^*(1)$ for $i\in\{0,\ldots, p-1\}$ also gives the list of quasi-greedy expansions of $1$ associated with an alternate base $\Gamma = (\gamma_{p-1}, \gamma_{p-2}, \ldots, \gamma_0)$ which is different from $\Beta$. But for each $i\in\{0,\ldots,p-1\}$, the substitution $\varphi_{S^{i}(\Beta)}$ only depends on the digits of $d_{S^i(\Beta)}^*(1)$. Therefore, we must have $\varphi_{S^{i}(\Gamma)}=\varphi_{S^{i}(\Beta)}$ for every $i\in\{0,\ldots,p-1\}$, and thus also $\varphi_{\Gamma}\circ \varphi_{S(\Gamma)}\circ \cdots \circ \varphi_{S^{p-1}(\Gamma)}=\varphi_{\Beta}\circ \varphi_{S(\Beta)}\circ \cdots \circ \varphi_{S^{p-1}(\Beta)}$. By Lemma~\ref{Lem:PF-eigenvalue}, we obtain that the products $\beta_{p-1}\cdots \beta_1\beta_0$ and $\gamma_{p-1}\cdots \gamma_1\gamma_0$ are the same.
Then by \cite[Proposition 20]{Charlier&Cisternino&Masakova&Pelantova:2022}, we derive that $\Gamma=B$.
\end{proof}

Let us mention that the question of the uniqueness of an alternate base given a list of quasi-greedy expansions of 1 remains open in case that the base is not assumed to be a Parry alternate base.

\section{A running example}
\label{Sec : Example}

In this section, we illustrate our constructions and results on an example. Consider the alternate base $\Beta=(\frac{1+ \sqrt{13}}{2},\frac{5+ \sqrt{13}}{6})$. It is Parry since $\DB^*(1)=20(01)^\omega$ and $d_{S(B)}^*(1)=(10)^\omega$. We set $\ell=m=2$, that is we consider the writings $20(01)^\omega$ and $10(10)^\omega$ of the two quasi-greedy expansions of $1$ in order to have common preperiods and periods which are multiple of $p=2$.

By Proposition~\ref{Pro : DistancesAndQuasiGreedy}, we can compute the largest $\Beta$-integers $M_{\Beta,n}$ with a $\Beta$-expansion of length less than or equal to $n$. The numbers $M_{\Beta,n}$ and (the left part of) their $\Beta$-expansions are given in Table~\ref{Table:M} for $n\le 7$.
\begin{table}[htb]
\[
\begin{array}{c|r|r}
n & \DB(M_{\Beta,n}) & M_{\Beta,n} \\
\hline
0 & \varepsilon 	& 0\\
1 & 1 			& 1 \\
2 & 20 			& \frac{5+ \sqrt{13}}{3} \\
3 & 101 			& \frac{5+ \sqrt{13}}{2} \\
4 & 2001 		&  \frac{17+ 4\sqrt{13}}{3} \\
5 & 10101 		& 8+ 2\sqrt{13} \\
6 & 200101 		& \frac{109+ 29\sqrt{13}}{6} \\
7 & 1010101 		& 26+ 7\sqrt{13} \\
\end{array}
\]
\caption{The first numbers $M_{\Beta,n}$ for the alternate base $\Beta=(\frac{1+ \sqrt{13}}{2},\frac{5+ \sqrt{13}}{6})$.}
\label{Table:M}
\end{table}
Then, in Table~\ref{Table:B-integers-wB-vB} are given the $\Beta$-integers with a $\Beta$-expansions of length at most $5$ (up to two decimal digits), their $\Beta$-expansions, the corresponding prefixes of the sequence $w_{\Beta}$ and its projection $v_{\Beta}$ onto the finite alphabet $\{0,\ldots,\ell p+mp-1\}=\{0,1,2,3\}$.
\begin{table}[htb]
\[
\begin{array}{c|c|r|c|c||c|c|r|c|c||c|c|r|c|c}
k & x_k & \DB(x_k) & w_{\Beta} & v_{\Beta} &
k & x_k & \DB(x_k) & w_{\Beta} & v_{\Beta}
&
k & x_k & \DB(x_k) & w_{\Beta} & v_{\Beta}\\
\hline
0 & 0	& \varepsilon 	& 0 & 0 &
12 & 8.03	& 1100			& 0 & 0 &
24 & 16.64	& 100001			& 1 & 1\\
1 & 1 	& 1 				& 1 & 1 &
13 & 9.03 	& 1101 			& 3 & 3 &
25 & 17.07	& 100010			& 0 & 0\\
2 & 1.43 	& 10 			& 0 & 0 &
14 & 9.47 	& 2000			& 0 & 0 &
26 & 18.07 	& 100011			& 1 & 1\\
3 & 2.43 	& 11 			& 1 & 1 &
15 & 10.47	& 2001	 		& 4 & 2 &
27 & 18.51	& 100020			& 2 & 2\\
4 & 2.86 	& 20 			& 2 & 2 &
16 & 10.90	& 10000 			& 0 & 0 &
28 & 18.94	& 100100			& 0 & 0\\
5 & 3.30 	& 100 			& 0 & 0 &
17 & 11.90	& 10001 			& 1 & 1 &
29 & 19.94	& 100101			& 3 & 3\\
6 & 4.30 	& 101 			& 3 & 3 &
18 & 12.34	& 10010 			& 0 & 0 &
30 & 20.38	& 101000			& 0 & 0\\
7 & 4.73 	& 1000			& 0 & 0 &
19 & 13.34 	& 10011 			& 1 & 1 &
31 & 21.38	& 101001			& 1 & 1\\
8 & 5.73 	& 1001 			& 1 & 1 &
20 & 13.77	& 10020 			& 2 & 2 &
32 & 21.81	& 101010			& 0 & 0\\
9 & 6.17 	& 1010			& 0 & 0 &
21 & 14.21	& 10100 			& 0 & 0 &
33 & 22.81	& 101011			& 1 & 1\\
10 & 7.17	& 1011			& 1 & 1 &
22 & 15.21	& 10101 			& 5 & 3 &
34 & 23.25	& 101020			& 2 & 2\\
11 & 7.60	& 1020 			& 2 & 2 &
23 & 15.64	& 100000 		& 0 & 0 &
35 & 23.68	& 101100			& 0 & 0
\end{array}
\]
\bigskip

\caption{$\Beta$-expansions of the first $\Beta$-integers and the corresponding prefixes of
$w_{\Beta}$ and $v_{\Beta}$, with respect to the alternate base $\Beta=(\frac{1+ \sqrt{13}}{2},\frac{5+ \sqrt{13}}{6})$.}
\label{Table:B-integers-wB-vB}
\end{table}
From the proof of Proposition~\ref{Prop:Parry-distances}, we see that there can be at most $\ell p+mp=4$ possible distances between consecutive $\Beta$-integers. In our case, we only get two of them since
\begin{align*}
\Delta_0		&=\val_{\Beta}(0\bigcdot d_{0,1}d_{0,1}\cdots)
			=\val_{\Beta}(0\bigcdot 20(01)^\omega)
			=1 \\
\Delta_1		&=\val_{\Beta}(0\bigcdot d_{1,2}d_{1,3}\cdots)
			=\val_{\Beta}(0\bigcdot (01)^\omega)
			=\frac{-1+\sqrt{13}}{6}
			\sim 0.434259 \\
\Delta_2		&=\val_{\Beta}(0\bigcdot d_{0,3}d_{0,4}\cdots)
			=\val_{\Beta}(0\bigcdot (01)^\omega)
			=\Delta_1 \\
\Delta_3		&=\val_{\Beta}(0\bigcdot d_{1,4}d_{1,5}\cdots)
			=\val_{\Beta}(0\bigcdot (01)^\omega)
			=\Delta_1.
\end{align*}
The first $\Beta$-integers, the $\Beta$-expansions of which are given in Table~\ref{Table:B-integers-wB-vB}, are represented in Figure~\ref{Fig:beta-integers}.
\begin{figure}[htb]
\includegraphics[scale=1]{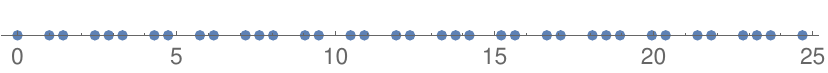}
\caption{The first $\Beta$-integers, with respect to the alternate base $\Beta=(\frac{1+ \sqrt{13}}{2},\frac{5+ \sqrt{13}}{6})$.}
\label{Fig:beta-integers}
\end{figure}

Now, we consider the shifted base $S(\Beta)=(\frac{5+ \sqrt{13}}{6},\frac{1+ \sqrt{13}}{2})$, which we simply denote by $\tilde{\Beta}$ for conciseness (this brings no ambiguity since $p=2$).
To illustrate Lemma~\ref{Lem : NSB and NB}, the $\Beta$-integers belonging to $\beta_0\N_{\tilde{\Beta}}$ are those having a right-most digit $0$ in their $\Beta$-expansion. In Figure~\ref{Fig:beta-integers-2} are represented the $\Beta$-integers from Figure~\ref{Fig:beta-integers} that belong to $\beta_0\N_{\tilde{\Beta}}$.
\begin{figure}[htb]
\includegraphics[scale=1]{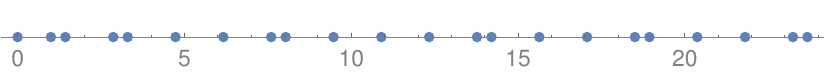}
\caption{The first $\Beta$-integers belonging to the subset $\beta_0\N_{\tilde{\Beta}}$, with respect to the alternate base $\Beta=(\frac{1+ \sqrt{13}}{2},\frac{5+ \sqrt{13}}{6})$.}
\label{Fig:beta-integers-2}
\end{figure}
On the other hand, since $\lceil\beta_0\rceil=\lceil\frac{5+\sqrt{13}}{6}\rceil=2$, the second inclusion from Lemma~\ref{Lem : NSB and NB} gives $\N_{\Beta}\subset\beta_0\N_{\tilde{\Beta}}+\{0,1\}$, which can be seen in Table~\ref{Table:B-integers-wB-vB} as all $\Beta$-integers end either in a digit $0$ or $1$. Note that some numbers in $\beta_0\N_{\tilde{\Beta}}+\{0,1\}$ are not $\Beta$-integers. For example, $x_4=\frac{5+ \sqrt{13}}{3}$ belongs to $\beta_0\N_{\tilde{\Beta}}$ as $d_{\Beta}(x_4)=20\bigcdot 0^\omega$, but $x_4+1=\frac{8+ \sqrt{13}}{3}$ is not a $\Beta$-integer since $\DB(x_4+1)=100\bigcdot 101 0^\omega$.

Let us denote the $k$-th $\tilde{\Beta}$-integer by  $\tilde{x}_k$ and the distance between consecutive $\tilde{\Beta}$-integers differing in (maximal) position $n$ by
$\tilde{\Delta}_n$. The $\tilde{\Beta}$-integers $M_{\tilde{\Beta},n}$ and their $\Beta$-expansions are given in Table~\ref{Table:SB-M} for $n\le 7$.
\begin{table}[htb]
\[
\begin{array}{c|r|r}
n & d_{\tilde{\Beta}}(M_{\tilde{\Beta},n}) & M_{\tilde{\Beta},n} \\
\hline
0 & \varepsilon 	& 0\\
1 & 2 			& 2 \\
2 & 10 			& \frac{1+ \sqrt{13}}{3} \\
3 & 200 			& 3+ \sqrt{13} \\
4 & 1010 		&  \frac{9+ 3\sqrt{13}}{2} \\
5 & 20010 		& \frac{23+ 7\sqrt{13}}{2} \\
6 & 101010 		& 17+ 5\sqrt{13} \\
7 & 2001010 		& \frac{81+ 23\sqrt{13}}{2} 
\end{array}
\]
\caption{The first numbers $M_{\tilde{\Beta},n}$ for the alternate base $\tilde{\Beta}=(\frac{5+ \sqrt{13}}{6},\frac{1+ \sqrt{13}}{2})$.}
\label{Table:SB-M}
\end{table}
Table~\ref{Table:SB-integers-wSB-vSB} is the analogue of Table~\ref{Table:B-integers-wB-vB} for the shifted base $\tilde{\Beta}$.
\begin{table}[htb]
\[
\begin{array}{c|c|r|c|c||c|c|r|c|c||c|c|r|c|c}
k & \tilde{x}_k & d_{\tilde{\Beta}}(\tilde{x}_k) & w_{\tilde{\Beta}} & v_{\Beta} &
k & \tilde{x}_k & d_{\tilde{\Beta}}(\tilde{x}_k) & w_{\tilde{\Beta}} & v_{\tilde{\Beta}}
&
k & \tilde{x}_k & d_{\tilde{\Beta}}(\tilde{x}_k) & w_{\tilde{\Beta}} & v_{\tilde{\Beta}}\\
\hline
0 	& 0		& \varepsilon 	& 0 & 0 &
12 	& 9.90	& 1010			& 4 & 2 &
24 	& 20.51	& 11002			& 1 & 1\\
1 	& 1 		& 1 				& 0 & 0 &
13 	& 10.90 	& 10000 			& 0 & 0 &
25 	& 20.81	& 11010			& 4 & 2\\
2 	& 2 	& 2	 			& 1 & 1 &
14 	& 11.90 	& 10001			& 0 & 0 &
26 	& 21.81 	& 20000			& 0 & 0\\
3 	& 2.30 	& 10 			& 2 & 2 &
15 	& 12.90	& 10002	 		& 4 & 2 &
27 	& 22.81	& 20001			& 0 & 0\\
4 	& 3.30 	& 100 			& 0 & 0 &
16 	& 13.21	& 10010 			& 2 & 2 &
28 	& 23.81	& 20002			& 1 & 1\\
5 	& 4.30 	& 101 			& 0 & 0 &
17 	& 14.21	& 10100 			& 0 & 0 &
29 	& 24.11	& 20010			& 5 & 2\\
6 	& 5.30 	& 102 			& 1 & 1 &
18 	& 12.21	& 10101 			& 0 & 0 &
30 	& 25.11	& 100000			& 0 & 0\\
7 	& 5.60 	& 110			& 2 & 2 &
19 	& 16.21 	& 10102 			& 1 & 1 &
31 	& 26.11	& 100001			& 0 & 0\\
8 	& 6.60 	& 200 			& 3 & 3 &
20 	& 16.51	& 10110 			& 2 & 2 &
32 	& 27.11	& 100002			& 0 & 0\\
9 	& 7.60 	& 1000			& 0 & 0 &
21 	& 17.51	& 10200 			& 3 & 3 &
33 	& 27.42	& 100010			& 2 & 2\\
10 	& 8.60	& 1001			& 0 & 0 &
22 	& 18.51	& 11000 			& 0 & 0 &
34 	& 28.42	& 100100			& 0 & 0\\
11 	& 9.60	& 1002 			& 1 & 1 &
23 	& 19.51	& 11001 			& 0 & 0 &
35 	& 29.42	& 100101			& 0 & 0
\end{array}
\]
\bigskip

\caption{$\tilde{\Beta}$-expansions of the first $\tilde{\Beta}$-integers and the corresponding prefixes of
$w_{\tilde{\Beta}}$ and $v_{\tilde{\Beta}}$, with respect to the alternate base $\tilde{\Beta}=(\frac{5+ \sqrt{13}}{6},\frac{1+ \sqrt{13}}{2})$.}
\label{Table:SB-integers-wSB-vSB}
\end{table}
For the base $\tilde{\Beta}$, we get two possible distances between consecutive $\tilde{\Beta}$-integers:
\begin{align*}
\tilde{\Delta}_0		&=\val_{\tilde{\Beta}}(0\bigcdot d_{1,1}d_{1,1}\cdots)
			=\val_{\tilde{\Beta}}(0\bigcdot (10)^\omega)
			=1 \\
\tilde{\Delta}_1		&=\val_{\tilde{\Beta}}(0\bigcdot d_{0,2}d_{0,3}\cdots)
			=\val_{\tilde{\Beta}}(0\bigcdot 0(01)^\omega)
			=\frac{-3+\sqrt{13}}{2}
			\sim 0.302776 \\
\tilde{\Delta}_2		&=\val_{\tilde{\Beta}}(0\bigcdot d_{1,3}d_{1,4}\cdots)
			=\val_{\tilde{\Beta}}(0\bigcdot (10)^\omega)
			=\tilde{\Delta}_0	 \\
\tilde{\Delta}_3		&=\val_{\tilde{\Beta}}(0\bigcdot d_{0,4}d_{0,5}\cdots)
			=\val_{\Beta}(0\bigcdot (10)^\omega)
			=\tilde{\Delta}_0	.
\end{align*}
The first $\tilde{\Beta}$-integers, the $\tilde{\Beta}$-expansions of which are given in Table~\ref{Table:B-integers-wB-vB}, are represented in Figure~\ref{Fig:beta-integers-S}.
\begin{figure}[htb]
\includegraphics[scale=1]{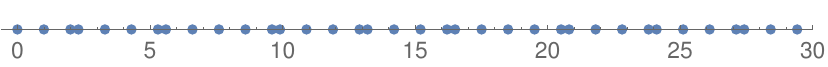}
\caption{The first $\Beta$-integers, with respect to the alternate base $\tilde{\Beta}=(\frac{5+ \sqrt{13}}{6},\frac{1+ \sqrt{13}}{2})$.}
\label{Fig:beta-integers-S}
\end{figure}

\enlargethispage{2\baselineskip}
We now illustrate Proposition~\ref{Pro : Substitution}, which says that $\psi_{\Beta}(w_{\tilde{B}})=w_{\Beta}$. From Table~\ref{Table:M}, we see that the map $\psi_{\Beta}\colon\N\to\N^*$ is given by $0\mapsto 01,\ 1\mapsto 2$ and $n\mapsto 0(n+1)$ for all $n\ge 2$.
Computing a prefix
\begin{align*}
	\psi_{\Beta}(w_{\tilde{B}})
	&=\psi_{\Beta}(001 2 001 2 3 001 4 001 2
	001 2 3 001 4 001 5 \cdots) \\
&=01012 03 01012 03 04 01012 05 01012
03 01012 03 04 01012 05 01012 06 \cdots
\end{align*}
of $\psi_{\Beta}(w_{\tilde{B}})$, we see that it indeed coincides with a prefix of $w_{\Beta}$.

Now, let us compute the graph described in the proof of Theorem~\ref{Thm : FinitSubstit} associated with the base $\Beta$. This graph is depicted in Figure~\ref{Fig : Automaton}.
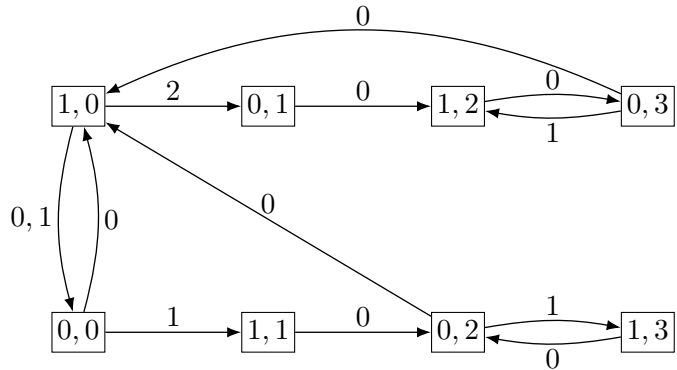
\begin{figure}[htb]
\centering
\begin{tikzpicture}
\tikzstyle{every node}=[shape=rectangle, fill=none, draw=black,
minimum size=15pt, inner sep=2pt]
\node (1) at (0,3) {$1,0$};
\node (2) at (2.5,3) {$0,1$};
\node (3) at (5,3) {$1,2$};
\node (4) at (7.5,3) {$0,3$};

\node (6) at (0,0) {$0,0$};
\node (7) at (2.5,0) {$1,1$};
\node (8) at (5,0) {$0,2$};
\node (9) at (7.5,0) {$1,3$};

\tikzstyle{every node}=[shape=rectangle, fill=none, draw=black, minimum size=15pt, inner sep=2pt]
\tikzstyle{every path}=[color=black, line width=0.5 pt]
\tikzstyle{every node}=[shape=rectangle, minimum size=5pt, inner sep=2pt]

\draw [-Latex] (1) to node [above] {$2$} (2);
\draw [-Latex] (1) to [bend right=15] node [left] {$0,1$} (6);

\draw [-Latex] (2) to node [above] {$0$} (3);

\draw [-Latex] (3) to [bend left=10] node [above] {$0$} (4);
\draw [-Latex] (4) to [bend left=10] node [below] {$1$} (3);

\draw [-Latex] (4) to [bend right=25] node [above] {$0$} (1);

\draw [-Latex] (6) to node [above] {$1$} (7);
\draw [-Latex] (6) to [bend right=15] node [right] {$0$} (1);

\draw [-Latex] (7) to node [above] {$0$} (8);
\draw [-Latex] (8) to [bend left=10] node [above] {$1$} (9);
\draw [-Latex] (8) to node [above] {$0$} (1);
\draw [-Latex] (9) to [bend left=10] node [below] {$0$} (8);

\end{tikzpicture}
\caption{The graph associated with the base $\Beta=(\frac{1+ \sqrt{13}}{2},\frac{5+ \sqrt{13}}{6})$.}
\label{Fig : Automaton}
\end{figure}
The set of vertices is given by $\{0,1\}\times \{0,1,2,3\}$. Following the prefix $2001$ of $\DB^*(1)$ starting from the vertex $(1,0)$, we successively visit the states $(0,1)$, $(1,2)$, $(0,3)$ and then $(1,2)$ again. Similarly, by following the prefix $1010$ of $d_{S(B)}^*(1)$ starting from the vertex $(0,0)$, we successively visit the states $(1,1)$, $(0,2)$, $(1,3)$ and then $(0,2)$ again. When we read a letter less than the one prescribed by the current quasi-greedy expansion from a vertex of the form $(i,n)$, we go to the vertex $(i-1,0)$. We thus get the arrows from $(1,0)$ to $(0,0)$ labeled by $0$ and $1$, the arrow from $(0,3)$ to $(1,0)$ labeled by $0$, and the arrow from $(0,2)$ to $(1,0)$ labeled by $0$. Note that the obtained graph is bipartite: we always go from states with first component $0$ to states with first component $1$, and conversely, we always go from states with first component $1$ to states with first component $0$. So, if we order the states as
\[
	(1,0),(1,1),(1,2),(1,3),(0,0),(0,1),(0,2),(0,3),
\]
the corresponding adjacency matrix if given by
\[
	M=\begin{pmatrix}
	0 & 0 & 0 & 0 & 2 & 1 & 0 & 0 \\
	0 & 0 & 0 & 0 & 0 & 0 & 1 & 0 \\
	0 & 0 & 0 & 0 & 0 & 0 & 0 & 1 \\
	0 & 0 & 0 & 0 & 0 & 0 & 1 & 0 \\
	1 & 1 & 0 & 0 & 0 & 0 & 0 & 0 \\
	0 & 0 & 1 & 0 & 0 & 0 & 0 & 0 \\
	1 & 0 & 0 & 1 & 0 & 0 & 0 & 0 \\
	1 & 0 & 1 & 0 & 0 & 0 & 0 & 0
	\end{pmatrix}
\]
The matrices
\[
	M_0=\begin{pmatrix}
	1 & 1 & 0 & 0 \\
	0 & 0 & 1 & 0 \\
	1 & 0 & 0 & 1 \\
	1 & 0 & 1 & 0
	\end{pmatrix}
	\quad \text{ and }\quad
	M_1=\begin{pmatrix}
	2 & 1 & 0 & 0 \\
	0 & 0 & 1 & 0 \\
	0 & 0 & 0 & 1 \\
	0 & 0 & 1 & 0
	\end{pmatrix}
\]
are respectively the transposed incidence matrices of the substitutions $\varphi_{\Beta}$ and $\varphi_{\tilde{\Beta}}$ over the alphabet $\{0,1,2,3\}$, which are defined by
\[
	\varphi_{\Beta}\colon
	\begin{cases}
	0\mapsto 01\\
	1\mapsto 2\\
	2\mapsto 03\\
	3\mapsto 02
	\end{cases}
	\quad \text{ and }\quad
	\varphi_{\tilde{\Beta}}\colon
	\begin{cases}
	0\mapsto 001\\
	1\mapsto 2\\
	2\mapsto 3\\
	3\mapsto 2.
	\end{cases}
\]
Observe that, as described in the proof of Theorem~\ref{Thm : FinitSubstit}, the associated graph is strongly connected and the word $(2001)(0)(00)(1010)(0)$ labels a path from $(1,0)$ to $(1,0)$. The matrix
\[
	D_1=M_1M_0=\begin{pmatrix}
	2 & 2 & 1 & 0 \\
	1 & 0 & 0 & 1 \\
	1 & 0 & 1 & 0 \\
	1 & 0 & 0 & 1
	\end{pmatrix}
\]
is thus the transposed incidence matrix of the substitution
\[
	\varphi_{\Beta}\circ\varphi_{\tilde{\Beta}}\colon
	\begin{cases}
	0\mapsto 01012\\
	1\mapsto 03\\
	2\mapsto 02\\
	3\mapsto 03.\\
	\end{cases}
\]
This matrix is primitive as we have
\[
	(D_1)^3=\begin{pmatrix}
	23 & 14 & 10 & 6 \\
	10 & 6 & 4 & 3 \\
	10 & 6 & 5 & 2 \\
	10 & 6 & 4 & 3
	\end{pmatrix}.
\]
Computing a prefix
\[
	01012\cdot 03\cdot 01012\cdot 03\cdot 02\cdot 01012\cdot 03\cdot 01012 \cdot 03\cdot 01012\cdot 03\cdot 02\cdots
\]
of the fixed point of the substitution $\varphi_{\Beta}\circ\varphi_{\tilde{\Beta}}$, we see that it indeed corresponds to the prefix of the infinite word $v_{\Beta}$ coding the distances between $\Beta$-integers given in Table~\ref{Table:B-integers-wB-vB}.

Moreover, the characteristic polynomial of $D_1$ is given by $X^4-4X^3+2X^2+X$, which factors as $X(X-1)(X^2-3X-1)$. The eigenvalues of $D_1$ are thus given by $0,1,\frac{3+\sqrt{13}}{2},\frac{3-\sqrt{13}}{2}$. The dominant eigenvalue is $\frac{3+\sqrt{13}}{2}$, which is precisely the product $\beta_1\beta_0$ as stated in Lemma~\ref{Lem:PF-eigenvalue}.

Note that, in our specific example, since we have $\Delta_1=\Delta_2=\Delta_3$, the image
\[
	\pi(v_{\Beta})=0101101010110101 \cdots
\]
under the projection $\pi\colon\{0,1,2,3\}^*\to\{0,1\}^*,\ 0\mapsto 0,\ 1\mapsto 1,\ 2\mapsto 1,\ 3\mapsto 1$ contains enough information to code the distances between consecutive $\Beta$-integers. Clearly, this new infinite sequence $\pi(v_{\Beta})$ is the fixed point of the substitution given by
\begin{equation}\label{eq:substitutionrunning}
	\begin{cases}	
	0\mapsto 01011\\
	1\mapsto 01.	
	\end{cases}
\end{equation}

\section{\textbf{Sturmian sequences}}
\label{sec:sturmian}

 Balanced sequences over an alphabet $\{a,b\}$
that are not eventually periodic are called \emph{sturmian}. Recall that a binary sequence
$(v_n )_{n\in \N}$ is \emph{balanced} if for every length $n\in \N$ and every $i,j \in \N$ the number of occurrences of the letter $a$ in the words  $v_{i}v_{i+1}\cdots v_{i+n-1}$ and $v_{j}v_{j+1}\cdots v_{j+n-1}$ differs at most by $1$.

There are many equivalent definitions of sturmian sequences, see~\cite{Lothaire2002}. For our purposes,
we use the result of~\cite{Berstel&Seebold:1994} which characterizes sturmian sequences among fixed points of primitive substitutions: An infinite binary word over the alphabet $\{0,1\}$ that is fixed by a primitive morphism is sturmian if and only if the morphism belongs to the so-called sturmian monoid, i.e., the monoid of binary morphisms generated by the following morphisms:
\[
	E\colon\begin{cases}	
	0\mapsto 1\\
	1\mapsto 0
	\end{cases}
	\qquad
	G\colon\begin{cases}	
	0\mapsto 0\\
	1\mapsto 01
	\end{cases}\qquad \text{and}
	\qquad
	\tilde{G}\colon\begin{cases}	
	0\mapsto 0\\
	1\mapsto 10
	\end{cases}.
\]
Sturmian morphisms are precisely those which map a sturmian sequence to a sturmian sequence. For more details on sturmian morphisms, see \cite{Lothaire2002}.

Sequences $v_B$, as introduced in Definition~\ref{Def : v-phi}, are fixed by a substitution over an alphabet with $\ell p +mp$  letters where $\ell$  and $m$ are chosen so that all quasi-greedy expansions of 1 have preperiods and periods of the same lengths $\ell p$ and $mp$ respectively.

As we have seen on the example in Section~\ref{Sec : Example}, in some cases this alphabet is exaggerated and the sequence of distances between consecutive $\Beta$-integers could be coded by a projection  $\pi$ of $v_B$ onto a smaller alphabet. The new sequence $\pi(v_B)$ can also be fixed by a substitution.

In this section, we are interested in the cases where $v_{\Beta}$ itself (not its projection) is a sturmian sequence. This situation happens to be quite rare, even if we allow alternate base numeration systems. The following proposition describes all the possible cases.

\begin{proposition}
\label{prop:sturmian}
Let $\Beta =(\beta_{p-1},\ldots,\beta_0)$ be a Parry alternate base. The infinite sequence  $v_{\Beta}$ is sturmian if and only if one of the following cases is satisfied.
\begin{itemize}
\setlength{\itemindent}{0.5cm}
\item[Case 1.] We have $p=1$ and $\DB^*(1)=(d0)^\omega$ with $d\geq 1$, in which case $\beta_0$ is the positive root of $X^2-dX-1$ and $v_{\Beta}$ is the sturmian sequence fixed by the substitution $0\mapsto 0^d1,\, 1\mapsto 0$.
\item[Case 2.] We have $p=1$ and $\DB^*(1)=(d+1)d^\omega$ for $d\geq 1$, in which case $\beta_0$ is the largest root of $X^2-(d+2)X+1$ and $v_{\Beta}$ is the sturmian sequence fixed by the substitution $0\mapsto 0^{d+1}1,\, 1\mapsto 0^d1$.
\item[Case 3.] We have $p=2$ and $\DB^*(1)=(d0)^\omega$, $d_{S(\Beta)}^*(1)=(e0)^\omega$ with $d,e\geq 1$, in which case $\beta_0$ and $\beta_1$ are the largest roots of $dX^2-deX-e$ and $eX^2-deX-d$ respectively, the product $\beta_1\beta_0$ is the largest root of $X^2-(de+2)X+1$, and $v_{\Beta}$ is the  sturmian sequence fixed by the substitution $0\mapsto(0^e1)^d0,\, 1\mapsto 0^e1$.
\end{itemize}
\end{proposition}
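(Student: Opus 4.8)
The plan is to prove both directions by analysing the primitive substitution that fixes $v_\Beta$ and appealing to the characterization of sturmian morphisms in~\cite{Berstel&Seebold:1994}; throughout I take $\ell$ and $m$ minimal, so that $\A=\{0,\dots,\ell p+mp-1\}$ is the smallest admissible alphabet. \emph{Reduction to a binary alphabet.} By Theorem~\ref{Thm : FinitSubstit}, $v_\Beta$ is the fixed point of the primitive substitution $\varphi=\varphi_\Beta\circ\cdots\circ\varphi_{S^{p-1}(\Beta)}$, so every letter of $\A$ occurs in $v_\Beta$. Since a sturmian word uses exactly two letters, $v_\Beta$ sturmian forces $\ell p+mp=2$. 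As $(\ell+m)p=2$ with $\ell\ge 0$, $m\ge1$, $p\ge 1$, the only solutions are $(p,\ell,m)\in\{(1,0,2),(1,1,1),(2,0,1)\}$, and these are exactly the three shapes appearing in Cases~1--3: a purely periodic period-$2$ expansion, an expansion of preperiod and period $1$, and a pair of purely periodic period-$2$ expansions.

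\emph{Necessity.} The sturmian monoid is generated by $E,G,\tilde G$, all of whose incidence matrices have determinant $\pm1$; hence every sturmian morphism has $\det=\pm1$, and by~\cite{Berstel&Seebold:1994} a sturmian $v_\Beta$ forces $\det M_\varphi=\pm1$. I would compute $\varphi$ in each family from Definition~\ref{Def : v-phi}. With $\DB^*(1)=(t_1t_2)^\omega$ one gets $\varphi\colon 0\mapsto 0^{t_1}1,\ 1\mapsto 0^{t_2+1}$ and $\det M_\varphi=-(t_2+1)$; with $\DB^*(1)=t_1t_2^\omega$ one gets $\varphi\colon 0\mapsto 0^{t_1}1,\ 1\mapsto 0^{t_2}1$ and $\det M_\varphi=t_1-t_2$; and with $\DB^*(1)=(a_1a_2)^\omega$, $d_{S(\Beta)}^*(1)=(b_1b_2)^\omega$, composing $\varphi_\Beta\colon 0\mapsto 0^{b_1}1,\,1\mapsto 0^{a_2+1}$ with $\varphi_{S(\Beta)}\colon 0\mapsto 0^{a_1}1,\,1\mapsto 0^{b_2+1}$ yields $\det M_\varphi=(a_2+1)(b_2+1)$. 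Imposing $\det M_\varphi=\pm1$ forces $t_2=0$, respectively $t_1-t_2=1$, respectively $a_2=b_2=0$. The admissibility of quasi-greedy expansions (Theorem~\ref{CC}: all shifts are $\le_{\lex}$, and the expansion does not end in $0^\omega$) gives $t_1>t_2\ge1$ in the second family and positive leading digits throughout, so these constraints produce exactly $(d0)^\omega$, $(d{+}1)d^\omega$, and the pair $(d0)^\omega,(e0)^\omega$ with $d,e\ge1$.

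\emph{Base polynomials and sufficiency.} The polynomials follow from $\val_\Beta(0\bigcdot \DB^*(1))=1$ by summing geometric series: in Case~1, $1=d\beta_0/(\beta_0^2-1)$ gives $\beta_0^2-d\beta_0-1=0$; in Case~2, $1=(d{+}1)\beta_0^{-1}+d\sum_{k\ge2}\beta_0^{-k}$ gives $\beta_0^2-(d{+}2)\beta_0+1=0$; in Case~3 the relations $d\beta_0=\beta_1\beta_0-1$ and $e\beta_1=\beta_1\beta_0-1$ give $d\beta_0^2-de\beta_0-e=0$ and $e\beta_1^2-de\beta_1-d=0$, and eliminating to the product $P=\beta_1\beta_0$ gives $P^2-(de{+}2)P+1=0$. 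For the converse I would read off each substitution and write it in the sturmian monoid: Case~1 is $0\mapsto 0^d1,\,1\mapsto0$, i.e.\ $G^dE$; Case~2 is $0\mapsto 0^{d+1}1,\,1\mapsto 0^d1$, i.e.\ $G^dE\tilde GE$; and in Case~3 one has $\varphi_\Beta=G^eE$ and $\varphi_{S(\Beta)}=G^dE$, so $\varphi=G^eEG^dE$ maps $0\mapsto(0^e1)^d0$, $1\mapsto 0^e1$. Each belongs to the sturmian monoid and is primitive by Theorem~\ref{Thm : FinitSubstit}, so~\cite{Berstel&Seebold:1994} yields that $v_\Beta$ is sturmian; aperiodicity is automatic since the dominant eigenvalue ($\beta_0$, resp.\ $\beta_1\beta_0$) is the quadratic irrational found above.

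\emph{Main obstacle.} I expect the difficulty to be organizational rather than conceptual: accurately computing the composite $\varphi_\Beta\circ\varphi_{S(\Beta)}$ and its incidence matrix in the period-$2$ case, and invoking Theorem~\ref{CC} to discard the inequality $t_1<t_2$ and to force $t_2\ge1$, so that the determinant condition lands precisely on the three listed families. The conceptual heart is simply that $\det M_\varphi$ counts the ``extra'' trailing zeros in the images of the letters, so that $\det M_\varphi=\pm1$ eliminates all of them at once.
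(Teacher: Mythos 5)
Your proposal is correct, and on the substantive case $p\ge 2$ it coincides with the paper's proof step for step: sturmianity forces a binary alphabet, hence $(\ell+m)p=2$ and thus $p=2$, $\ell=0$, $m=1$; the Berstel--S\'e\'ebold characterization reduces sturmianity of the fixed point of the primitive substitution $\varphi_{\Beta}\circ\varphi_{S(\Beta)}$ (Theorem~\ref{Thm : FinitSubstit}) to membership in the monoid generated by $E,G,\tilde G$, whose generators have incidence determinant $\pm 1$; the computation $\det M_{\varphi_{\Beta}}\det M_{\varphi_{S(\Beta)}}=(d_2+1)(e_2+1)=\pm 1$ forces $d_2=e_2=0$; the polynomials follow from $1=d/\beta_1+1/(\beta_1\beta_0)=e/\beta_0+1/(\beta_1\beta_0)$; and the converse is the factorization $\varphi_{\Beta}=G^e\circ E$, $\varphi_{S(\Beta)}=G^d\circ E$, exactly as in the paper. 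Where you genuinely depart from the paper is in Cases 1 and 2: the paper dispatches $p=1$ by citing the earlier work of Frougny, Mas\'akov\'a and Pelantov\'a, whereas you run the same determinant criterion uniformly over the remaining triples $(p,\ell,m)=(1,0,2)$ and $(1,1,1)$, getting $\det=-(t_2+1)$ and $t_1-t_2$ respectively, and then use admissibility of quasi-greedy expansions (shifts lexicographically dominated, no tail of zeros) to land exactly on $(d0)^\omega$ and $(d+1)d^\omega$; your monoid factorizations $G^d\circ E$ and $G^d\circ E\circ\tilde G\circ E$ are correct (indeed $0\mapsto G^d(01)=0^{d+1}1$ and $1\mapsto G^d(1)=0^d1$). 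This buys a self-contained uniform proof of all three cases at the price of some admissibility bookkeeping, while the paper's citation buys brevity. Two points worth making explicit in a final write-up: first, the claim that every letter of $\{0,\ldots,\ell p+mp-1\}$ actually occurs in $v_{\Beta}$ rests on the primitivity furnished by Theorem~\ref{Thm : FinitSubstit} (the paper leaves this implicit as well); second, the inequality $t_2\le t_1$ you use to discard $t_1-t_2=-1$ comes from the shift-domination property of the quasi-greedy expansion of $1$ itself (the classical Parry condition when $p=1$), which is a companion of Theorem~\ref{CC} rather than its literal statement, since Theorem~\ref{CC} is formulated for expansions of $x\in[0,1)$ with strict inequalities.
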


\begin{proof}
The case when $p=1$ has been treated in~\cite{Frougny&Masakova&Pelantova:2004} and \cite{Frougny&Masakova&Pelantova:2007}. Assume that $p\geq 2$ and that $v_{\Beta}$ is sturmian. Since sturmian sequences are binary words and the infinite word $v_{\Beta}$ is defined over the alphabet $\{0,\ldots,\ell p+mp-1\}$, we must have $(\ell+m)p=2$. Since in addition $p\ge 2$ and $m\ge 1$, this gives $p=2$, $\ell=0$ and $m=1$, i.e., $\DB^*(1)$ and $d_{S(\Beta)}^*(1)$ are of the form
  \begin{equation}
  \label{eq:sturm}
  \DB^*(1)=(d_1d_2)^\omega
  \quad \text{and}\quad
  d_{S(\Beta)}^*(1)=(e_1e_2)^\omega.
  \end{equation}
By Theorem~\ref{Thm : FinitSubstit}, the infinite word $v_{\Beta}$ is fixed by the primitive substitution $\varphi_{B}\circ \varphi_{S(B)}$ where
\begin{alignat*}{5}
&  \varphi_{\Beta} \colon && 0\mapsto 0^{e_1}1 	&& \quad\text{and}\quad && \varphi_{S(\Beta)}\colon && 0\mapsto 0^{d_1}1 \\
& 				 && 1\mapsto 0^{d_2+1} 	&& 						&& 	 			&& 1\mapsto 0^{e_2+1}.
\end{alignat*}
Since $v_{\Beta}$ is a sturmian sequence, any substitution that fixes it must be a sturmian morphism. Consequently, its incidence matrix $A_1 A_0$ with
  \[
  A_0=\begin{pmatrix}
  	e_1 & d_2+1 \\
  	1 	& 0
  \end{pmatrix}
  \quad\text{and}\quad
  A_1=\begin{pmatrix}
    d_1 & e_2+1 \\
    1 	& 0
   \end{pmatrix},
  \]
has determinant equal to $\pm 1$, and hence $\det A_0=\pm 1$ and $\det A_1=\pm 1$. This implies $d_2=e_2=0$. Setting $d_1=d$, $e_1=e$ into~\eqref{eq:sturm}, we derive that $1=\frac{d}{\beta_1}+\frac{1}{\beta_1\beta_0}=\frac{e}{\beta_0}+\frac{1}{\beta_1\beta_0}$. It is then not difficult to see that $\beta_0$ and $\beta_1$ are the largest roots of the polynomials $dX^2-deX-e$ and $eX^2-deX-d$, respectively. We then also get that
\begin{align*}
(\beta_1\beta_0)^2
&=d\beta_1\beta_0^2+\beta_1\beta_0\\
&=\beta_1(de\beta_0+e)+\beta_1\beta_0\\
&=(de+1)\beta_1\beta_0+e\beta_1 \\
&=(de+1)\beta_1\beta_0+\beta_0\beta_1-1\\
&=(de+2)\beta_1\beta_0-1
\end{align*}
hence $\beta_1\beta_0$ is the largest root of the polynomial $X^2-(de+2)X+1$ (the other one belonging to the interval $(0,1)$).
Moreover, the corresponding substitution $\varphi_{B}\circ \varphi_{S(B)}$ is given by
\begin{align*}
	\varphi_{B}\circ \varphi_{S(B)}\colon
	& 0\mapsto (0^e1)^d0\\
	& 1\mapsto 0^e1.
\end{align*}

Conversely, suppose that we are in Case 3. Then the morphisms $\varphi_{\Beta}$ and $\varphi_{S(\Beta)}$ are given by
\begin{alignat*}{5}
&  \varphi_{\Beta} \colon && 0\mapsto 0^e1 	&& \quad\text{and}\quad && \varphi_{S(\Beta)}\colon && 0\mapsto 0^d1 \\
& 				 && 1\mapsto 0 	&& 						&& 	 			&& 1\mapsto 0,
\end{alignat*}
which are both sturmian morphisms, as $\varphi_{\Beta}=G^e\circ E$ and $\varphi_{S(\Beta)}=G^d\circ E$. Thus, so is their composition $\varphi_{\Beta}\circ \varphi_{S(\Beta)}$, and hence its fixed point $v_{\Beta}$ is a  sturmian sequence. 
 \end{proof}

In all cases, one can derive frequencies $\rho_0,\rho_1$ of letters 0 and 1 in the sturmian sequence $v_{\Beta}$ from the corresponding substitution. Frequencies  of letters will be expressed in terms of its continued fraction. Recall, that a continued fraction  $[a_0,a_1, a_2, \ldots ] $  with $a_0 \in \mathbb{Z}$  and $a_n \in \mathbb{N}, a_n \geq 1$ for every $n \in \N, n>0$,  represents the irrational number $\gamma$, when
$$
\gamma = \lim_{n\to +\infty} \ a_0 +\cfrac{1}{a_1 +\cfrac{1}{a_2 + \cfrac{1}{\ddots   \ {a_{n-1}+\cfrac{1}{a_n}}}}}.
$$
If the  sequence $a_0, a_1, a_2 , \ldots $ of the so-called partial quotients in the continued fraction is eventually periodic  with the period  $a_{i+1},a_{i+2}, \ldots, a_{i+k}$ we use the notation
$$
[a_0, a_1, \ldots, a_{i} ,\overline{a_{i+1},a_{i+2}, \ldots, a_{i+k}}].
$$

 \begin{corollary}
Let $\Beta =(\beta_{p-1},\ldots,\beta_0)$ be a Parry alternate base such that the infinite sequence $v_{\Beta}$ is sturmian. Then the frequency vector $(\rho_0,\rho_1)$ of the letters $0$ and $1$ in  $v_{\Beta}$ is given as follows, according to the cases described in Proposition~\ref{prop:sturmian}.
\begin{itemize}
\setlength{\itemindent}{0.5cm}
\item[Case 1.] We have $(\rho_0,\rho_1)=(\frac{\beta_0}{\beta_0+1},\frac{1}{\beta_0+1})$ and the continued fraction of $\rho_0$ is $[0,1,\overline{d}]$.
\item[Case 2.] We have $(\rho_0,\rho_1)=(\frac{\beta_0-1}{\beta_0},\frac{1}{\beta_0})$ and the continued fraction of $\rho_0$ is $[0,\overline{1,d}]$.
\item[Case 3.] We have $(\rho_0,\rho_1)=(\frac{\beta_1}{\beta_1+1},\frac{1}{\beta_1+1})$ and the continued fraction of $\rho_0$ is equal to $[0,1,\overline{e,d}]$.
\end{itemize}
 \end{corollary}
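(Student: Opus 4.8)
The plan is to use the general principle recalled in Section~\ref{Sec : Preliminaries}: the vector of letter frequencies of a fixed point of a primitive substitution is the right Perron--Frobenius eigenvector of its incidence matrix, normalized so that its two coordinates sum to~$1$. By Proposition~\ref{prop:sturmian} together with Theorem~\ref{Thm : FinitSubstit}, in each of the three cases $v_{\Beta}$ is the fixed point of an explicitly given primitive binary substitution, and by Lemma~\ref{Lem:PF-eigenvalue} the relevant Perron--Frobenius eigenvalue $\lambda$ is $\beta_0$ when $p=1$ (Cases~1 and~2) and the product $\beta_1\beta_0$ when $p=2$ (Case~3). Hence the whole corollary reduces to diagonalizing one $2\times2$ nonnegative integer matrix in each case and reading off its dominant eigenvector.

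First I would record the incidence matrix $M$ of the fixing substitution, read off directly from Proposition~\ref{prop:sturmian}: it is $\left(\begin{smallmatrix} d & 1\\ 1 & 0\end{smallmatrix}\right)$ in Case~1, $\left(\begin{smallmatrix} d+1 & d\\ 1 & 1\end{smallmatrix}\right)$ in Case~2, and $\left(\begin{smallmatrix} de+1 & e\\ d & 1\end{smallmatrix}\right)$ in Case~3. In every case the second row of the eigenvector equation $M(\rho_0,\rho_1)^{\top}=\lambda(\rho_0,\rho_1)^{\top}$ already determines the ratio $\rho_0/\rho_1$ as an affine function of $\lambda$, namely $\rho_0/\rho_1=\lambda$ (Case~1), $\rho_0/\rho_1=\lambda-1$ (Case~2), and $\rho_0/\rho_1=(\lambda-1)/d$ (Case~3). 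Imposing $\rho_0+\rho_1=1$ then yields the closed forms of $(\rho_0,\rho_1)$.

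The only genuine bookkeeping is to express these ratios through a single base. In Cases~1 and~2 this is immediate from $\lambda=\beta_0$, giving $\rho_0/\rho_1=\beta_0$ and $\rho_0/\rho_1=\beta_0-1$ and hence the vectors $(\tfrac{\beta_0}{\beta_0+1},\tfrac{1}{\beta_0+1})$ and $(\tfrac{\beta_0-1}{\beta_0},\tfrac{1}{\beta_0})$. In Case~3 I would substitute $\lambda=\beta_1\beta_0$ and invoke the defining relations $\beta_1=d+\tfrac1{\beta_0}$ and $\beta_0=e+\tfrac1{\beta_1}$ established in the proof of Proposition~\ref{prop:sturmian}, equivalently $\beta_1\beta_0-1=d\beta_0=e\beta_1$; these collapse $(\lambda-1)/d$ to the single base $\beta_0$, producing, after normalization, the frequency vector $(\tfrac{\beta_0}{\beta_0+1},\tfrac{1}{\beta_0+1})$. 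I expect the main (though still elementary) obstacle to lie exactly here, in correctly tracking which of the two bases survives the composition $\varphi_{\Beta}\circ\varphi_{S(\Beta)}$, since the interchange of $\beta_0$ and $\beta_1$ is easy to get backwards.

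Finally, for the continued fractions I would use that each base is a quadratic surd whose expansion is forced by its defining relation. In Case~1, $\beta_0=d+1/\beta_0$ gives $\beta_0=[\,\overline{d}\,]$, so writing $\rho_0=1/(1+1/\beta_0)$ and folding in the leading term yields $\rho_0=[\,0,1,\overline{d}\,]$; in Case~3 the coupled relations give $\beta_0=[\,\overline{e,d}\,]$, whence $\rho_0=1/(1+1/\beta_0)=[\,0,1,\overline{e,d}\,]$, consistently with the frequency obtained above. Case~2 is the delicate one, because $\beta_0$ satisfies $\beta_0=(d+2)-1/\beta_0$, which is not an additive continued fraction; there I would instead observe that $1/\rho_0=\beta_0/(\beta_0-1)$ is the positive root of $dX^2-dX-1$, whose purely periodic expansion is $[\,\overline{1,d}\,]$, so that $\rho_0=[\,0,\overline{1,d}\,]$. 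Matching these explicit surds to the prescribed periodic expansions is the only verification of substance; the remainder is routine $2\times2$ linear algebra.
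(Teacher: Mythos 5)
Your proposal is correct, and in Case 3 --- the only case the paper actually proves --- it follows the paper's route essentially verbatim: the frequency vector is the Perron--Frobenius eigenvector of the incidence matrix $\left(\begin{smallmatrix} de+1 & e\\ d & 1\end{smallmatrix}\right)$ of $\varphi_{\Beta}\circ\varphi_{S(\Beta)}$ with eigenvalue $\beta_1\beta_0$ (Lemma~\ref{Lem:PF-eigenvalue}), simplified via $\beta_1\beta_0-1=d\beta_0=e\beta_1$, and the continued fraction is read off from $\beta_0=e+1/\beta_1$ and $\beta_1=d+1/\beta_0$, giving $\beta_0=[\overline{e,d}]$ and $\rho_0=1/(1+1/\beta_0)=[0,1,\overline{e,d}]$. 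Where you genuinely diverge is in Cases 1 and 2: the paper disposes of $p=1$ with a citation to Frougny--Mas\'akov\'a--Pelantov\'a, whereas you prove both cases by the same uniform eigenvector computation, and your details check out --- $\left(\begin{smallmatrix} d&1\\1&0\end{smallmatrix}\right)$ and $\left(\begin{smallmatrix} d+1&d\\1&1\end{smallmatrix}\right)$ are indeed the incidence matrices of the substitutions listed in Proposition~\ref{prop:sturmian}, with dominant eigenvalue $\beta_0$ in both cases, and the second-row ratios $\rho_0/\rho_1=\lambda$, $\lambda-1$, $(\lambda-1)/d$ yield the stated vectors. Your Case 2 continued-fraction argument is a correct self-contained addition not present in the paper: from $\beta_0^2=(d+2)\beta_0-1$ one gets $(\beta_0-1)^2=d\beta_0$, so $1/\rho_0=\beta_0/(\beta_0-1)$ is the positive root of $dX^2-dX-1$, which satisfies $X=1+1/(d+1/X)$ and has its conjugate in $(-1,0)$, hence the purely periodic expansion $[\overline{1,d}]$ and $\rho_0=[0,\overline{1,d}]$. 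The trade-off is clear: the paper's proof is shorter by outsourcing $p=1$ to the literature, while yours is self-contained and makes visible that all three cases are instances of a single $2\times 2$ eigenvector computation, at the cost of the extra quadratic-surd verification in Case 2.
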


\begin{proof}
The case when $p=1$ has been treated in~\cite{Frougny&Masakova&Pelantova:2004} and \cite{Frougny&Masakova&Pelantova:2007}.
Suppose here that $p\ge 2$. The vector $(\rho_0,\rho_1)$ is an eigenvector of the incidence matrix
\[
	\begin{pmatrix}
       de+1 	& e \\
       d		& 1
	\end{pmatrix}
\]
of the substitution $\varphi_{B}\circ \varphi_{S(B)}$ corresponding to the Peron-Frobenius eigenvalue $\beta_1\beta_0$, and of course, such that $\rho_0+\rho_1=1$. So we easily compute that $\rho_0=\frac{\beta_1\beta_0-1}{\beta_1\beta_0-1+d}$. Since we are in Case 3, we know that $1=\frac{d}{\beta_1}+\frac{1}{\beta_1\beta_0}=\frac{e}{\beta_0}+\frac{1}{\beta_1\beta_0}$, hence $\beta_1\beta_0-1=d\beta_0=e\beta_1$. We obtain that $(\rho_0,\rho_1)=(\frac{\beta_0}{\beta_0+1},\frac{1}{\beta_0+1})$.

Let us show that the continued fraction of $\rho_0$ is equal to $[0,1,\overline{e,d}]$. Since $\beta_0$ is a root of $dx^2 - dex-e$, we have $\beta_0 - e - \tfrac{e}{d \beta_0} = 0$. As $d\beta_0 = e\beta_1$, we derive
$\beta_0=e+\frac{1}{\beta_1}$. By similar argumentation, we obtain $\beta_1=d+\frac{1}{\beta_0}$. Together
\[
	\beta_0 = e+ \frac{1}{d+\tfrac{1}{\beta_0}},
\]
and thus $\beta_0$ has the purely periodic continued fraction
$\beta_1 = [\overline{e,d}]$
and for the frequency $\rho_0$ we have
\[
\rho_0 = \frac{\beta_0}{\beta_0+1} = \frac{1}{1+\frac{1}{\beta_0}} = [0,1,\overline{e,d}].
\]

\end{proof}

Surprisingly, we see that one can obtain a sturmian word $v_{\Beta}$ with frequency $\rho_0=[0,\overline{1,a}]$ as a coding of integers in two different numeration systems. For $p=1$, this is only possible for the real bases $\tau$ and $\tau^2$ where $\tau=\frac{1+\sqrt{5}}{2}$ is the golden ratio. On the one hand, the real base $\tau$ fulfills the condition of Case~1 with $d=1$, and we get $\rho_0=\frac{\tau}{\tau+1}=[0,\overline{1}]$. On the other hand, the real base $\tau^2$ fulfills the condition of Case~2 with $d=1$, and we get $\rho_0=\frac{\tau^2-1}{\tau^2}=\frac{\tau}{\tau+1}=[0,\overline{1}]$.

But if we allow alternate bases, there are in fact infinitely many pairs of numeration systems giving the same frequency $\rho_0=[0,\overline{1,a}]$. It happens for $p=1$ with $\DB^*(1)=(a+1)a^\omega$. In this case $\beta_0$ is a root of $X^2-(a+2)X+1$ and the distances between consecutive $\Beta$-integers take values $\Delta_0=1$ and $\Delta_1=\val_{\Beta}(0\bigcdot a^\omega)=\frac{a}{\beta_0-1}$. For example, for $a=2$, we obtain the real base $2+\sqrt{3}$ and the frequency $\rho_0$ is given by $\frac{\beta_0-1}{\beta_0}=-1+\sqrt{3}=[0,\overline{1,2}]$. The infinite word $v_{\Beta}$ is fixed by the substitution $0\mapsto 0001$  and $1\mapsto 001$.

Another possibility is to take $p=2$ with $\DB^*(1)=(10)^\omega$ and $d_{S(\Beta)}^*(1)=(a0)^\omega$.
Here, $\beta_0$ is a root of $X^2-aX-a$ and $\beta_1$ is a root of $aX^2-aX-1$, their product $\beta_1\beta_0$ being a root of $X^2-(a+2)X+1$. The distances between consecutive $\Beta$-integers take values $\Delta_0=1$ and $\Delta_1=\val_{\Beta}(0\bigcdot (0a)^\omega)=\frac{a}{\beta_1\beta_0-1}$. For $a=2$, we get the alternate base $B=(\beta_1,\beta_0)=(\frac{1+\sqrt{3}}{2},1+\sqrt{3})$ and the frequency $\rho_0$ is also given  by $\frac{\beta_0}{\beta_0+1}=-1+\sqrt{3}=[0,\overline{1,2}]$. The infinite word $v_{\Beta}$ is fixed by another substitution, namely,  $0\mapsto 0010$  and $1\mapsto 001$.

From the substitutions we see that although the two sturmian words have the same frequency of letters, the sequences do not coincide: The one from Case 2 with $p=1$ has the prefix $0001$,  whereas the sequence from Case 3 with $p=2$ has the prefix $0010$.

For the list $d_{\Beta}^*(1),  d_{S(\Beta)}^*(1),\ldots,   d_{S^{p-1}(\Beta)}^*(1)$ in general, the associated word $v_\Beta$ from Definition~\ref{Def : v-phi} is rarely binary.  Nevertheless, the phenomenon that gaps between consecutive $\Beta$-integers take only two values can occur for arbitrary $p$ when the quasi-greedy expansions of unity are of specific form. Then the structure of the set of $\Beta$-integers can be coded by a projection $\pi$ of the sequence $v_\Beta$ to a binary alphabet.

In the running example of Section~\ref{Sec : Example}, the projected word $\pi(v_\Beta)$ onto the binary alphabet is fixed by the substitution $\varphi$ defined  in \eqref{eq:substitutionrunning}.
It can be easily verified that $\varphi = E\tilde{G}EG^2E$.
Therefore the infinite word $\pi(v_\Beta)$ is sturmian.
 Noteworthy, we can calculate that the frequencies of letters in this case are $\rho_0=[0,2, \overline{3}]$  and $\rho_1=[0,1,1, \overline{3}]$, and so this sturmian sequence is different from all the cases described in Proposition \ref{prop:sturmian}. The following example illustrates that not every set of $\Beta$-integers that can be coded by a binary infinite word gives a sturmian sequence.

\begin{example}\label{ex:non-sturm}
Consider $p=2$ and $d_{\Beta}^*(1) = (3020)^\omega$ and $  d_{S(\Beta)}^*(1) = 4(2030)^\omega$. For $\beta_0, \beta_1$  and their product $\delta$ we have
 $$
 \begin{aligned}
 1&= \frac{3}{\beta_1} + \frac{2}{\delta\beta_1} + \frac{1}{\delta^2}\\
 1&= \frac{4}{\beta_0} + \frac{2}{\delta} + \frac{3}{\delta^2} +  \frac{2}{\delta^3} +  \frac{1}{\beta_0\delta^3},
 \end{aligned}
 $$
which yields that $\delta $ is a root of $X^2 - 14 X -12$, i.e., $\delta = 7+\sqrt{61}$. Moreover,  $\beta_0 = \frac{\delta^2-1}{3\delta +2} = \frac{11+\sqrt{61}}{4}$
 and $\beta_1 = \frac{\delta}{\beta_0} = \frac{4}{15}(4+ \sqrt{61})$.

According to Proposition \ref{Pro : DistancesAndQuasiGreedy}, there are only two possible distances  between consecutive $\Beta$-integers, which are given by $\Delta_0=\val_{\Beta}(3020)^\omega = 1$ and  $\Delta_1=\val_{\Beta}(2030)^\omega = \frac{2\delta+3}{3\delta +2} = \frac{\sqrt{61}-5}{4}\sim 0.7$. More precisely, for all $n\ge 0$ and $j\in\{0,1,2,3\}$, we have
\[
	\Delta_{4n+j}=\begin{cases}
	\Delta_0, & \text{if }j\in\{0,3\}; \\
	\Delta_1, & \text{if }j\in\{1,2\}.
\end{cases}
\]
By Proposition \ref{Pro : Xk+1-Xk}, we can compute the first $\Beta$-integers; see Table~\ref{Table:not-sturmian}. We see that the binary infinite word $v_{\Beta}$ coding $\Beta$-integers thus contains the word $00$ (coding distances between $x_0, x_1, x_2$) and the word  $11$ (coding the distances between $x_{14}, x_{15}, x_{16}$). Therefore the sequence is not balanced, hence not sturmian.
\begin{table}[htb]
\[
\begin{array}{c|c|r|c|c||c|c|r|c|c}
k & x_k & \DB(x_k) & w_{\Beta} & v_{\Beta} &
k & x_k & \DB(x_k) & w_{\Beta} & v_{\Beta} \\
\hline
0 & 0	& \varepsilon 	& 0 & 0 &
9 & 8.7 & 14	& 1 & 1 \\
1 & 1 	& 1 	& 0 & 0 &
10 & 9.4 	& 20 & 0 & 0 \\
2 & 2 	& 2 	& 0 & 0 &
11 & 10.4 	& 21	& 0 & 0 \\
3 & 3 	& 3 	& 0 & 0 &
12 & 11.4	& 22	 & 0 & 0 \\
4 & 4 & 4 	& 1 & 1 &
13 & 12.4	& 23	 & 0 & 0 \\
5 & 4.7 	& 10 & 0 & 0 &
14 & 13.4	& 24 			& 1 & 1 \\
6 & 5.7 	& 11 & 0 & 0 &
15 & 14.1	& 30 & 2 & 1 \\
7 & 6.7 	& 12 & 0 & 0 &
16 & 14.8 	& 100 & 0 & 0 \\
8 & 7.7 	& 13 & 0 & 0 &
17 & 15.8	& 101 			& 0 & 0
\end{array}
\]
\bigskip

\caption{The first $\Beta$-integers and the corresponding prefixes of
$w_{\Beta}$ and $v_{\Beta}$, with respect to the alternate base $\Beta=(\frac{4}{15}(4+ \sqrt{61}),\frac{11+ \sqrt{61}}{4})$.}
\label{Table:not-sturmian}
\end{table}
\end{example}

\section{Comments and open problems}

The infinite symbolic sequences associated with $\beta$-integers in the context of Rényi numeration systems may have affine factor complexity. The characterization of such cases is given in~\cite{BernatMasakovaPelantova2007} using a detailed description of special factors. This allows one to determine which Parry sequences  are sturmian, or more generally Arnoux-Rauzy.  It turns out that Arnoux-Rauzy sequences originated from $\beta$-integers are standard.  It is worth mentioning that the sturmian sequence coding distances between consecutive $\Beta$-integers of our running example is  not standard.

Infinite symbolic sequences $v_\Beta$ coding $\Beta$-integers as defined in this paper represent a very rich family of infinite sequences. Similarly as we have identified sturmian sequences among the infinite words $v_\Beta$, it would be interesting to find which of those infinite words $v_\Beta$ belong to a recently introduced class of dendric sequences~\cite{BertheDeFeliceDolceLeroyPerrinReutenauerRindone2015}. Dendric sequences appear to be $S$-adic~\cite{BertheDolceDurandLeroyPerrin2018} and generalize two very extensively studied classes of infinite words, namely sequences coding interval exchange transformations and episturmian sequences.


\section{\textbf{Acknowledgment}}
Émilie Charlier is supported by the FNRS grant J.0034.22.
Célia Cisternino is supported by the FNRS grant 1.A.564.19F.
Zuzana Mas\'akov\'a and Edita Pelantov\'a are supported by the European Regional Development Fund project CZ.02.1.01/0.0/0.0/16\_019/0000778.

\bibliographystyle{abbrv}
\bibliography{BetaIntegersFinal}

\end{document}